\newtheorem{theorem}{Theorem}[section]
\newtheorem{lemma}[theorem]{Lemma}
\newtheorem{proposition}[theorem]{Proposition}
\newtheorem{conjecture}[theorem]{Conjecture}
\newtheorem{formula}[theorem]{Formula}
\newtheorem{corollary}[theorem]{Corollary}
\newtheorem{remark}[theorem]{Remark}
\newtheorem{example}[theorem]{Example}
\newcommand{\myvcenter}[1]{\ensuremath{\vcenter{\hbox{#1}}}}
\newcommand{\hako}[1]{\mbox{\boldmath$#1$}}
\title{A new multidimensional slow continued fraction algorithm
and stepped surface}
\date{\today}
\author{
{\sc Maki Furukado} \thanks{
College of Business Administration, 
Yokohama National University, 
79-4 Tokiwadai, Hodogaya-ku, Yokohama 
240-8501, Japan, 
E-mail: furukado@ynu.ac.jp
}
\and
{\sc Shunji Ito} \thanks{
Faculty of Science, 
Toho University, 
2-2-1 Miyama Funabashi-shi, Chiba 
274-8510, Japan, 
E-mail: shunjiito@gmail.com
}
\and
{\sc Asaki Saito} \thanks{
Faculty of Systems Information Science,
Future University Hakodate, 
116-2 Kameda Nakano-cho, Hakodate-shi, Hokkaido
041-8655, Japan, 
E-mail: saito@fun.ac.jp}
\and
{\sc Jun-ichi Tamura} \thanks{
Institute for Mathematics and Computer Science, Tsuda College, 
2-1-1 Tsudamachi, Kodaira-shi, Tokyo 
187-8577, Japan, 
E-mail: jtamura@tsuda.ac.jp}
\and
{\sc Shin-ichi Yasutomi} \thanks{
Faculty of Science, 
Toho University, 
2-2-1 Miyama Funabashi-shi, Chiba 
274-8510, Japan, 
E-mail: shinichi.yasutomi@sci.toho-u.ac.jp}
}
\begin{document}
\maketitle

\begin{abstract}

We give a new algorithm of slow continued fraction expansion related to any real cubic number field as a 2-dimensional version of the Farey map.
Using our algorithm, we can find the generators of dual substitutions (so-called tiling substitutions) for any stepped surface for any cubic direction.

\end{abstract}


%
\pagestyle{myheadings}
\thispagestyle{empty}
\baselineskip=15pt
\vskip 30pt

\section{Introduction}
The main topics of this paper are 

\noindent
(i)
to find a good algorithm of slow 
continued fraction expansion of dimension 
$2$ by which the expansion of $\bar{\alpha}=\left( \alpha_{1}, \alpha_{2} \right)$ is always expected to be periodic for any $\mathbb{Q}$-basis 
$\bar{\bar{\alpha}} = \left( 1, \alpha_{1}, \alpha_{2} \right)$ of 
an arbitrarily given
real cubic number field $K$  such that the
unimodular matrix 
$\mathcal{P}er$ coming from a period of the 
expansion has the minimal polynomial 
of a Pisot number
as its characteristic polynomial,

\noindent
and

\noindent
(ii)
to find a set of generators of dual substitutions for the 
stepped surface ${\mathscr S} \left( \bar{\bar \alpha} \right) $ of dimension $2$ for any ${\mathbb Q}$-basis 
$\bar{\bar \alpha}$ of $K$ and to give a finite description 
(or an effective construction) in terms of six dual
substitutions coming from the continued fraction expansion obtained
 by our algorithms, see Section \ref{sec:cont}, 
 \ref{sec:stepped} for notation.  

Notice that by the symmetry of the 
lattice $\mathbb{Z}^{s}$, we may assume that 
$\alpha_{0}$, $\alpha_{1}$, $\alpha_{2} >0$
with
$\alpha_{0} + \alpha_{1}+\alpha_{2}=1$.

The stepped surface $\mathscr S \left(
\bar{\bar \alpha} \right)$, 
$\left( 
\bar{\bar \alpha} = \left( \alpha_{0}, \alpha_{1}, \ldots,
\alpha_{s} \right)
\right)$ was introduced as a $s$-dimensional version of the
sturmian word, cf. \cite{IO, IO2, AI}.
The stepped surface of dimension $s=2$ is of special 
interest, since it is not only a geometrical object related to an aperiodic tiling, but also it has 
a connection with number theoretical problems related to simultaneous Diophantine approximations which are best possible up to constant, 
cf.\cite{IFHY}.  If $\bar{\bar \alpha} \in K^{s+1}$ is 
a $\mathbb{Q}$-basis of certain real algebraic number 
field $K$ of degree $s+1$, there is a 
beautiful connection between the stepped surface ${\mathscr S} \left( \bar{\bar \alpha} \right)$ and the continued fraction expansion of 
$\bar{\alpha} = \left( \alpha_{1}, \alpha_{2}, \ldots, \alpha_{s} \right) \in K^{s}$ provided that the continued fraction is periodic related to the Jacobi-Perron algorithm or the Brun algorithm (including the so-called modified Jacobi-Perron algorithm as its 
2-dimensional case). 
There appeared many papers concerning the construction 
of the stepped surface ${\mathscr S} \left( \bar{\bar \alpha} \right)$ for $\bar{\alpha}$ having a periodic continued fraction 
expansion, cf. \cite{IO, IO2, FIN, AI, ABI, F, BF}.

On the other hand, it has been a difficult problem to construct even a part of the stepped surface
${\mathscr S} \left( \bar{\bar \alpha} \right)$ for 
some
$\mathbb Q$-basis $\bar{\bar \alpha}$ of 
some real algebraic number field $K$, since there have not been a good deterministic algorithm to get a periodic 
continued fraction of $\bar{\alpha}$ for $s \geq 2$.  In a series of papers
(\cite{T, TY, TY4, TY2, TY3},\\
 \cite{TY5}) we made some good candidates of continued fraction algorithms for $1 \leq s \leq 4$, equipped with a value function $v$ (see 
Section \ref{sec:alg}) by which the algorithms become deterministic.  

The properties/conditions 
of the matrix 
$\mathcal{P}er$
mentioned in (i), i.e., 
the unimodularity, the Pisot property 
and the irreducibility are essential; in fact, under these conditions, the stepped surface becomes finitely descriptive, cf. Theorem 5 in \cite{F} due to T. Fernique.

Concerning our algorithm of continued fraction expansion of dimension $s=2$, which is expected to have the properties mentioned in (i) above, let us consider the case where $s=1$ for a while.  The algorithm
$\left(  \left[ 0, 1 \right], T, \varepsilon \right)$
 and its modified 
version are
considered by many authors (for example 
see \cite{I, IY}), where the transformation $T$ on the interval $\left[0,1 \right]$ is defined by 
\begin{align*}
&T(x):=\left\{
    \begin{array}{l}   
    \displaystyle \frac{x}{1-x} \quad 
\text{\ if \ }x \in I_0:= \left[ 0, \dfrac{1}{2} \right],\\
\\
\displaystyle\frac{2x-1}{x} \quad
\text{\ if \ }x\in I_1:= \left( \dfrac{1}{2}, 1 \right],\\
   \end{array}   
   \right.
   \end{align*}
$\varepsilon :[0,1]\to \{0,1\}$ with 
 $x\in I_{\varepsilon(x)}$.
 Let $x = \left[0; k_{1}, k_{2}, \ldots \right]$ be the simple continued fraction expansion, and let $F$ be the transformation on 
 $\left( 0, 1 \right]$ defined by $F\left(x \right) = \frac{1}{x} - \lfloor \frac{1}{x} \rfloor$.  
 Then, one can see that $T^{k_{1}+k_{2}}=F^{2} 
 \left( x \right)$ holds.  In this sense,
$\left(  \left[ 0, 1 \right], T, \varepsilon \right)$
 can be considered as a kind of slow continued fraction algorithm.  
$\left(  \left[ 0, 1 \right], T, \varepsilon \right)$
also has a connection with the Farey partition.  
The famous Lagrange's theorem says that,
if we consider the restriction $T_{K}$ of $T$
on 
$\left[ 0, 1 \right] \cap K$ for a real quadratic number field $K$, then every element $\alpha \in \left[ 0, 1 \right] \cap K$ becomes a periodic point of $T_{K}$, i.e., there exists $m \neq n \in 
{\mathbb Z}_{\geq 0}$ such that 
$T_{K}^{m} \left( \alpha \right) = T_{K}^{n} \left( \alpha \right)$.  
The expansion  obtained by the slow continued fraction algorithm 
$\left(  \left[ 0, 1 \right], T, \varepsilon \right)$ can be considered as an infinite word 
$\varepsilon \left( x \right)
\varepsilon \left( T \left( x \right) \right)
\varepsilon \left( T^{2} \left( x \right) \right) \cdots$ over an alphabet 
$\left\{ 0, 1 \right\}$.  Consequently, the generators of the dual substitutions on the stepped surface of dimension $1$ (the sturmian word of dimension $1$) consist of 
$\# \left\{ 0, 1 \right\}=2$ primitive substitutions.  We can 
extend the algorithm 
$\left(  \left[ 0, 1 \right], T, \varepsilon \right)$ to certain
multidimensional algorithms.  

In Section \ref{sec:algo}, we define a new deterministic algorithms of slow (additive) continued fraction of dimension $2$ equipped with a value function 
$v=v_{r} \left( \alpha, \beta, i, j \right)$.  The resulting expansion
by our algorithm can be considered as an infinite word 
over an alphabet $Ind$ given by 
\[
Ind:= \left\{
\left( i, j \right) \left |~ i, j \in \left\{ 0,1, 2 \right\}, \ i \neq j \right.
\right\},
\]
consequently the generators of the dual substitutions on
the stepped surface of dimension $2$ consist of $\# Ind=6$ 
primitive substitutions.

Theorem \ref{t-1} says the additivity of our algorithms. Theorems \ref{t1}, \ref{t2}, \ref{t3} give some admissibility conditions of the expansion obtained by our  algorithms. Proposition \ref{P1} says the convergence of the continued fractions.

Theorem \ref{P2} gives infinitely many examples of periodic expansions obtained by one of the algorithms.

In Section \ref{sec:cont}, we translate the expansion obtained by our algorithms into canonical representations of
continued fractions of dimension 2, cf. Theorem 
\ref{th:varepsilon}.
We also give reduction rules
by which we can make
acceleration of our continued fractions.

In Section \ref{sec:num}, 
we made the periodicity test for one of our algorithms
$\left( r=5/2 \right)$, cf. Table \ref{table:C}. 
We also gave an experiment, by using PC for pure cubic extensions 
$K={\Bbb Q} \left( \sqrt[3]{d} \right)$, $2\leq d \leq 10000$, $\sqrt[3]{d}\notin {\Bbb Q}$, which supports Conjecture \ref{con1}, 
that is a cubic version of Lagrange's theorem.
We also checked that 18797 continued fraction expansions
obtained by our algorithm with $r=5/2$
coming from the set $N_{15}$ are periodic, and the 
matrix $\mathcal{P}er$ always has the minimal polynomial of a Pisot number
as its characteristic polynomial.
Such a Pisot property was supported by another 
independent experiment for around 10000
continued fractions obtained by random generation of
totally real cubic number fields.

In Section \ref{sec:stepped}, we give some experiments which describe the generating process of the whole part of some stepped surfaces in terms of dual substitutions (or tiling substitutions) related to some real
cubic number field $K$ (including both totally real fields and fields having complex embeddings).

In Section \ref{sec:con}, we give two conjectures. Under these two conjectures together with Fernique's result
(Theorem 5 in \cite{F} mentioned above) 
we shall see that the stepped surface 
${\mathscr S}(\bar{\bar{\alpha}})$ becomes finitely descriptive by using only
six dual substitutions for any ${\Bbb Q}$-basis $\bar{\bar{\alpha}}$ of any given real cubic number field,
see Conclusion \ref{conclusion}.

\section{A new algorithm}
\label{sec:alg}
In what follows, $K$ denotes arbitrarily chosen fixed real cubic number field unless otherwise mentioned.
We put 
\[
\Delta_K:=
\left\{
\left( \alpha,\beta \right )\in K^2 ~\left|
\begin{array}{l}
1,\alpha,\beta \text{\ are linearly independent over ${\mathbb Q}$}, \\
 0<\alpha,\beta \ \text{and\ } \alpha+\beta<1 
 \end{array}
 \right.
 \right\}.
\]
We need a following lemma.
\label{sec:algo}
\begin{lemma} \label{aaa}
Let $p,q\in {\Bbb Z}^+$, $(p,q)=1$ 
and $p\not\equiv 0 \pmod{3}$.
Then,
\begin{align*}
\frac{|N(\alpha)|}{\alpha^{p/q}}=\frac{|N(\beta)|}{\beta^{p/q}}\ \text{implies\ }\alpha=\beta,
\end{align*}
for all $\alpha,\beta>0$ such that $\alpha,\beta\in K\backslash {\Bbb Q}$.
\end{lemma}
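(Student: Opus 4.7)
The plan is to introduce the ratio $\gamma:=\alpha/\beta\in K$ (note $\gamma>0$) and reduce the hypothesis to an equation of the form $\gamma^p = r^q$ with $r\in\mathbb{Q}^+$, then apply the field norm $N=N_{K/\mathbb{Q}}$ to pin down $r$ and finally $\gamma$.

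More concretely, first I would rewrite the hypothesis by cross-multiplying and raising to the $q$-th power to get
\[
\Bigl(\frac{\alpha}{\beta}\Bigr)^{p}=\Bigl(\frac{|N(\alpha)|}{|N(\beta)|}\Bigr)^{q}.
\]
Setting $\gamma:=\alpha/\beta$ and $r:=|N(\alpha)|/|N(\beta)|\in\mathbb{Q}^{+}$, this becomes $\gamma^{p}=r^{q}\in\mathbb{Q}^{+}$. Here the fractional powers $\alpha^{p/q},\beta^{p/q}$ are interpreted as positive reals (since $\alpha,\beta>0$), so all manipulations are legitimate.

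Next I would apply $N=N_{K/\mathbb{Q}}$ to the identity $\gamma^{p}=r^{q}$. Since $r^{q}\in\mathbb{Q}$ and $[K:\mathbb{Q}]=3$, we have $N(r^{q})=(r^{q})^{3}=r^{3q}$; on the other hand $N(\gamma^{p})=N(\gamma)^{p}$. Taking absolute values and using the multiplicativity $|N(\gamma)|=|N(\alpha)|/|N(\beta)|=r$, I obtain
\[
r^{p}=|N(\gamma)|^{p}=r^{3q},
\]
i.e.\ $r^{p-3q}=1$ in $\mathbb{Q}^{+}$. So either $r=1$, or $p=3q$; but the latter combined with $(p,q)=1$ forces $q=1,\ p=3$, contradicting the hypothesis $p\not\equiv 0\pmod{3}$. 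Hence $r=1$, whence $\gamma^{p}=1$, and since $\gamma$ is a positive real this gives $\gamma=1$, i.e.\ $\alpha=\beta$.

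There is essentially no serious obstacle: the argument is a direct norm computation. The only delicate point is to make sure the three hypotheses $(p,q)=1$, $p\not\equiv 0\pmod 3$, and $K$ being cubic are used exactly where they are needed, namely in the final step ruling out $p=3q$. The hypotheses $\alpha,\beta\in K\setminus\mathbb{Q}$ and the positivity are used only to make $\gamma$ a well-defined positive element of $K$ whose norm is a nonzero rational; the cubic degree of $K$ enters through the identity $N(c)=c^{3}$ for $c\in\mathbb{Q}$.
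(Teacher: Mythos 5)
Your proof is correct and follows essentially the same route as the paper: form $\zeta=\alpha/\beta$, derive $\zeta^{p}=|N(\zeta)|^{q}$, take norms to get $|N(\zeta)|^{p}=|N(\zeta)|^{3q}$, and use $p\not\equiv 0\pmod 3$ to rule out $p=3q$ and conclude $|N(\zeta)|=1$, hence $\zeta=1$ by positivity. The only cosmetic difference is that you route the exclusion of $p=3q$ through $(p,q)=1$, whereas $p=3q$ already contradicts $p\not\equiv 0\pmod 3$ directly.
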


\begin{proof}
We suppose that 
$\displaystyle\frac{|N(\alpha)|}{\alpha^{p/q}}=\frac{|N(\beta)|}{\beta^{p/q}}$ holds
 for $\alpha,\beta\in K$ with $\alpha,\beta\notin {\Bbb Q}$ and $\alpha,\beta>0$.
Let $\zeta=\alpha/\beta$. Then, we have $\zeta^{p}=|N(\zeta)|^q$.
Therefore, we see that  $N(\zeta)^{p}=|N(\zeta)|^{3q}$.
Since $N(\zeta)$ is a rational number and $p$ is not divisible by $3$,
we see $|N(\zeta)|=1$. Thus, we have $\zeta^{p}=1$, which implies $\zeta=\pm 1$.
Since $\alpha,\beta>0$, $\zeta=1$.
\end{proof}

We put 
\begin{align*}
Ind:=\{(i,j)|\ i,j\in \{0,1,2\},i\ne j\}.
\end{align*}

We denote domains $\Delta$ and $\triangle(i,j)$ for $(i,j)\in Ind$ 
by 
\begin{align*}
&\Delta:=\{(x,y)\in {\Bbb R}^2~|~x,y\geq 0, x+y\leq 1\},\\
&\triangle(1,2):=\{(x,y)\in \Delta~|~x\geq y\},\\
&\triangle(2,1):=\{(x,y)\in \Delta~|~x\leq y\},\\
&\triangle(0,1):=\{(x,y)\in \Delta~|~2x+y-1\leq 0\},\\
&\triangle(1,0):=\{(x,y)\in \Delta~|~2x+y-1\geq 0\},\\
&\triangle(0,2):=\{(x,y)\in \Delta~|~x+2y-1\leq 0\},\\
&\triangle(2,0):=\{(x,y)\in \Delta~|~x+2y-1\geq 0\}
\end{align*}
(see Figure \ref{fig:DomainDeltas}).

\begin{figure*}[hbpt]
\begin{center}
\includegraphics[width=13cm]{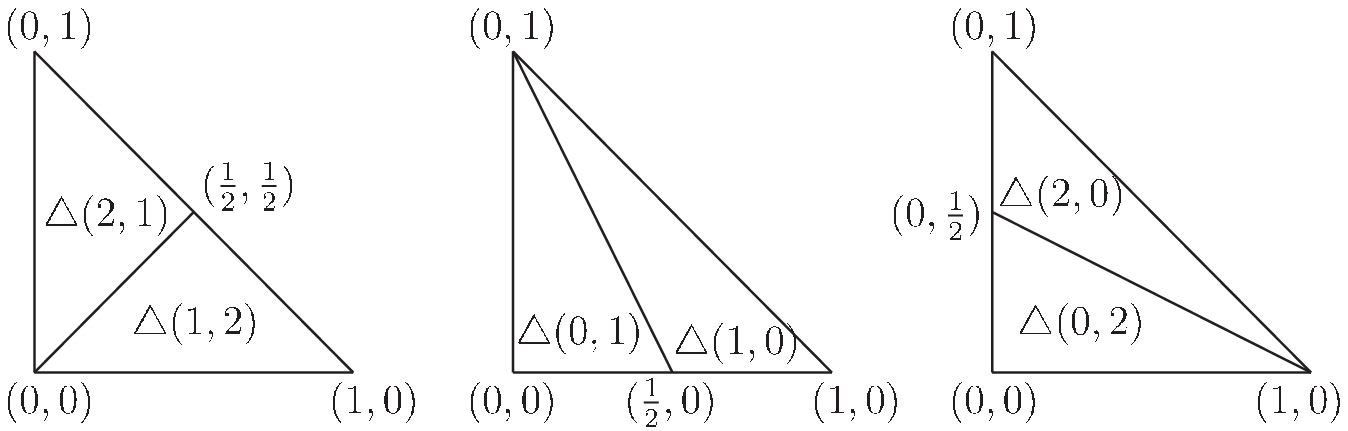}
\end{center}
\caption{The domains $\triangle 
\left( i, j \right)$, 
$\left( i, j \right) \in Ind.$}
\label{fig:DomainDeltas}
\end{figure*}

For each $\left(i, j \right) \in Ind$, let us introduce the  maps $T_{\left(i, j \right)}: \triangle \left( i, j \right) \rightarrow \Delta$ as follows:
\begin{align*}
&T_{\left(1,2 \right)}(x,y):=\displaystyle\left(\frac{x-y}{1-y},\frac{y}{1-y}\right),\\
&T_{\left( 2,1 \right)}(x,y):=\displaystyle\left(\frac{x}{1-x},\frac{y-x}{1-x}\right),\\ 
&T_{\left( 0, 1 \right)}(x,y):=\displaystyle\left(\frac{x}{1-x},\frac{y}{1-x}\right),\\
&T_{\left( 1,0 \right)}(x,y):=\displaystyle\left(\frac{2x+y-1}{x+y},\frac{y}{x+y}\right),\\
&T_{\left( 0, 2 \right)}(x,y):=\displaystyle\left(\frac{x}{1-y},\frac{y}{1-y}\right),\\
&T_{\left( 2, 0 \right)}(x,y):=\displaystyle\left(\frac{x}{x+y},\frac{x+2y-1}{x+y}\right).
\end{align*}

We define the value $v_r(\alpha,\beta,i,j)$ for $r\in {\Bbb R}^+$, $(\alpha,\beta)\in \Delta_K$ and $i,j\in \{0,1,2\}$
with $i\ne j$ as follows:
\[
v_r(\alpha,\beta,i,j):=
    \begin{cases}   
    \dfrac{|\alpha^{r}\beta^{r}|}{|N(\alpha)N(\beta)|}
&\text{ if  $\{i,j\}=\{1,2\}$},\\[10pt]
    \dfrac{|\alpha^{r}(1-\alpha-\beta)^{r}|}{|N(\alpha)N(1-\alpha-\beta)|}
&\text{ if  $\{i,j\}=\{0,1\}$},\\[10pt]
   \dfrac{|\beta^{r}(1-\alpha-\beta)^{r}|}{|N(\beta)N(1-\alpha-\beta)|},\ 
&\text{ if  $\{i,j\}=\{0,2\}$}.
   \end{cases}   
\]
In what follows,
we suppose that $r=p/q$ with
$p,q\in {\Bbb Z}^+$, $(p,q)=1$ and $p\not\equiv 0$ mod $3$ in this paper.

It follows from Lemma \ref{aaa} that the element $(i_0,j_0)\in Ind$
is uniquely determined by $v_r(\alpha,\beta,i_0,j_0)=\max \{ v_r(\alpha,\beta,i,j)\}$.
We define $\varepsilon(\alpha,\beta)=\varepsilon_K(\alpha,\beta)$ for $(\alpha,\beta)\in \Delta_K$ by
\[
\varepsilon(\alpha,\beta):=
    \begin{cases}
(1,2) &\text{if $\{i_0,j_0\}=\{1,2\}$ and $\left( \alpha,\beta \right)\in \triangle(1,2)$},\\
(2,1) &\text{if $\{i_0,j_0\}=\{1,2\}$ and  $(\alpha,\beta)\in \triangle(2,1)$},\\
(0,1) &\text{if  $\{i_0,j_0\}=\{0,1\}$ and $(\alpha,\beta)\in \triangle(0,1)$},\\
(1,0) &\text{if  $\{i_0,j_0\}=\{0,1\}$ and $(\alpha,\beta)\in \triangle(1,0)$},\\
(0,2) &\text{if  $\{i_0,j_0\}=\{0,2\}$ and $(\alpha,\beta)\in \triangle(0,2)$},\\
(2,0) &\text{if $\{i_0,j_0\}=\{0,2\}$ and $(\alpha,\beta)\in \triangle(2,0)$}.
    \end{cases}   
\]
Notice that $\varepsilon(\alpha,\beta)$
is well-defined since $1,\alpha,\beta$ is linearly independent over ${\Bbb Q}$.
We define the transformation $T=T_K=T_{K,r}$ on $\Delta_K$ by
\begin{align*}
\ T(\alpha,\beta):=T_{(i_0,j_0)}(\alpha,\beta) \ \text{if  $\varepsilon(\alpha,\beta)=(i_0,j_0)$}.
\end{align*}

Thus, we have seen that an algorithm $(\Delta_K,T,\varepsilon)$ can be defined.
We put
\begin{align}\label{d2}
&A_{(1,2)}:=
\begin{pmatrix}
1& 0& 1\\
0& 1& 1\\
0& 0& 1
\end{pmatrix}
,
\quad
A_{(2,1)}:=
\begin{pmatrix}
1& 1& 0\\
0& 1& 0\\
0& 1& 1
\end{pmatrix}
,\\
&A_{(0,1)}:=
\begin{pmatrix}
1& 1& 0\\
0& 1& 0\\
0& 0& 1
\end{pmatrix}
,
\quad
A_{(1,0)}:=
\begin{pmatrix}
2& -1& -1\\
1& 0& -1\\
0& 0& 1
\end{pmatrix}
,\\
&
A_{(0,2)}:=
\begin{pmatrix}
1& 0& 1\\
0& 1& 0\\
0& 0& 1
\end{pmatrix}
,
\quad
A_{(2,0)}:=
\begin{pmatrix}
2& -1& -1\\
0& 1&  0\\
1& -1& 0
\end{pmatrix}
.
\end{align}
For $n\in {\Bbb Z}_{\geq 0}$ we define $(\alpha_n,\beta_n)=T^n(\alpha,\beta)$. 
For $n\in {\Bbb Z}_{>0}$ we define
\begin{align}\label{eqqm}
M_n(\alpha,\beta)&=
\begin{pmatrix}
p''_n(\alpha,\beta)& p'_n(\alpha,\beta)& p_n(\alpha,\beta)\\
q''_n(\alpha,\beta)& q'_n(\alpha,\beta)& q_n(\alpha,\beta)\\
r''_n(\alpha,\beta)& r'_n(\alpha,\beta)& r_n(\alpha,\beta)\\
\end{pmatrix}
:=A_{\varepsilon(\alpha_0,\beta_0)}\cdots A_{\varepsilon(\alpha_{n-1},\beta_{n-1})}S,	
\end{align}
where 
\begin{align*}
S:=
\begin{pmatrix}
1& 1& 1\\
0& 1& 0\\
0& 0& 1
\end{pmatrix}.
\end{align*}
Then, we have following Theorem  \ref{t-1}.
\begin{theorem} \label{t-1}
For $(\alpha,\beta)\in \Delta_K$ and $n\in {\Bbb Z}_{\geq 0}$
\begin{align}
\text{$\varepsilon(\alpha_n,\beta_n)=(1,2)$}
\Rightarrow
\begin{cases}
(p_{n+1},q_{n+1},r_{n+1})=
(p_{n},q_{n},r_{n})+(p'_{n},q'_{n},r'_{n}),\label{bbb}\\
(p'_{n+1},q'_{n+1},r'_{n+1})=(p'_{n},q'_{n},r'_{n}),\\
(p''_{n+1},q''_{n+1},r''_{n+1})=(p''_{n},q''_{n},r''_{n}),\\
\end{cases}
\\
\text{$\varepsilon(\alpha_n,\beta_n)=(2,1)$}
\Rightarrow
\begin{cases}
(p_{n+1},q_{n+1},r_{n+1})=
(p_{n},q_{n},r_{n}),\label{bb2}\\
(p'_{n+1},q'_{n+1},r'_{n+1})=(p'_{n},q'_{n},r'_{n})+(p_{n},q_{n},r_{n}),\\
(p''_{n+1},q''_{n+1},r''_{n+1})=(p''_{n},q''_{n},r''_{n}),\\
\end{cases}
\\
\text{$\varepsilon(\alpha_n,\beta_n)=(0,1)$}
\Rightarrow
\begin{cases}
(p_{n+1},q_{n+1},r_{n+1})=
(p_{n},q_{n},r_{n}),\\
(p'_{n+1},q'_{n+1},r'_{n+1})=(p'_{n},q'_{n},r'_{n})+(p''_{n},q''_{n},r''_{n}),\\
(p''_{n+1},q''_{n+1},r''_{n+1})=(p''_{n},q''_{n},r''_{n}),\\
\end{cases}
\\
\text{$\varepsilon(\alpha_n,\beta_n)=(1,0)$}
\Rightarrow
\begin{cases}
(p_{n+1},q_{n+1},r_{n+1})=
(p_{n},q_{n},r_{n}),\\
(p'_{n+1},q'_{n+1},r'_{n+1})=(p'_{n},q'_{n},r'_{n}),\\
(p''_{n+1},q''_{n+1},r''_{n+1})=(p''_{n},q''_{n},r''_{n})+(p'_{n},q'_{n},r'_{n}),\\
\end{cases}
\\
\text{$\varepsilon(\alpha_n,\beta_n)=(0,2)$}
\Rightarrow
\begin{cases}
(p_{n+1},q_{n+1},r_{n+1})=
(p_{n},q_{n},r_{n})+(p''_{n},q''_{n},r''_{n}),\label{bb3}\\
(p'_{n+1},q'_{n+1},r'_{n+1})=(p'_{n},q'_{n},r'_{n}),\\
(p''_{n+1},q''_{n+1},r''_{n+1})=(p''_{n},q''_{n},r''_{n}),
\end{cases}
\\
\text{$\varepsilon(\alpha_n,\beta_n)=(2,0)$}
\Rightarrow
\begin{cases}
(p_{n+1},q_{n+1},r_{n+1})=
(p_{n},q_{n},r_{n}),\\
(p'_{n+1},q'_{n+1},r'_{n+1})=(p'_{n},q'_{n},r'_{n}),\\
(p''_{n+1},q''_{n+1},r''_{n+1})=(p''_{n},q''_{n},r''_{n})+(p_{n},q_{n},r_{n}).
\end{cases}
\end{align}
\end{theorem}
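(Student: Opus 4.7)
The plan is to collapse all six cases of the theorem into a single matrix identity of the form $M_{n+1} = M_n \cdot C_{\varepsilon(\alpha_n,\beta_n)}$ for suitable elementary matrices $C_{(i,j)}$, and then verify the identity by a direct conjugation calculation.

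First, from the recursive definition (\ref{eqqm}),
\begin{align*}
M_{n+1}(\alpha,\beta) &= A_{\varepsilon(\alpha_0,\beta_0)}\cdots A_{\varepsilon(\alpha_{n-1},\beta_{n-1})} A_{\varepsilon(\alpha_n,\beta_n)} S \\
&= M_n(\alpha,\beta)\, S^{-1} A_{\varepsilon(\alpha_n,\beta_n)} S,
\end{align*}
so the update of the columns of $M_n$ is governed by
\[
C_{(i,j)} := S^{-1} A_{(i,j)} S.
\]
Since the three columns of $M_n$, read left to right, are the vectors $(p''_n,q''_n,r''_n)^{\top}$, $(p'_n,q'_n,r'_n)^{\top}$, $(p_n,q_n,r_n)^{\top}$, right multiplication by $C_{(i,j)}$ acts on these columns through the off-diagonal entries of $C_{(i,j)}$.

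Second, one computes $S^{-1} = \left(\begin{smallmatrix} 1 & -1 & -1 \\ 0 & 1 & 0 \\ 0 & 0 & 1 \end{smallmatrix}\right)$ and then evaluates $C_{(i,j)}$ for each of the six pairs in $Ind$. The claim to verify is that every $C_{(i,j)}$ is a single elementary column-addition matrix, with the off-diagonal $1$ in precisely the position dictated by the update rule of the theorem: for instance, a direct multiplication shows $C_{(1,2)} = I + E_{23}$ (add column $\vec v'_n$ to column $\vec v_n$), $C_{(0,1)} = I + E_{12}$ (add column $\vec v''_n$ to column $\vec v'_n$), $C_{(1,0)} = I + E_{21}$, $C_{(2,1)} = I + E_{32}$, $C_{(0,2)} = I + E_{13}$ and $C_{(2,0)} = I + E_{31}$. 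Each such identity is read off an update line of the theorem.

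The essential content is therefore the algebraic fact that although the $A_{(i,j)}$ have entries as large as $2$ and contain $-1$'s, their $S$-conjugates are all elementary matrices; this is the mechanism behind the ``additivity''. There is no genuine obstacle beyond bookkeeping: the proof reduces to six $3\times 3$ computations of $S^{-1}A_{(i,j)}S$, which I would organize as a compact table listing the pair $(A_{(i,j)}, C_{(i,j)})$ side by side, and then observe that each of the six implications in the statement is simply the description of right-multiplication by the corresponding $C_{(i,j)}$ on the columns of $M_n$.
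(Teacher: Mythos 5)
Your proposal is correct and is essentially the paper's own argument: the paper likewise reduces each case to the observation that $M_{n+1}=A_{\varepsilon_0}\cdots A_{\varepsilon_{n-1}}\,(A_{\varepsilon_n}S)=M_n\,(S^{-1}A_{\varepsilon_n}S)$ and reads off the resulting column operation, only it displays the product $A_{(i,j)}S$ directly for the case $(1,2)$ rather than naming the conjugate $C_{(i,j)}=S^{-1}A_{(i,j)}S$ as an elementary matrix. Your formulation merely systematizes the same six $3\times 3$ verifications, and all six identities $C_{(1,2)}=I+E_{23}$, $C_{(2,1)}=I+E_{32}$, $C_{(0,1)}=I+E_{12}$, $C_{(1,0)}=I+E_{21}$, $C_{(0,2)}=I+E_{13}$, $C_{(2,0)}=I+E_{31}$ check out against the stated update rules.
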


\begin{proof}
Let $\varepsilon(\alpha_n,\beta_n)=(1,2)$.
Then, we get 
\begin{eqnarray*}
\lefteqn{
\begin{pmatrix}
p''_{n+1}(\alpha,\beta)& p'_{n+1}(\alpha,\beta)& p_{n+1}(\alpha,\beta)\\
q''_{n+1}(\alpha,\beta)& q'_{n+1}(\alpha,\beta)& q_{n+1} (\alpha,\beta)\\
r''_{n+1}(\alpha,\beta)& r'_{n+1}(\alpha,\beta)& r_{n+1}(\alpha,\beta)
\end{pmatrix}
}\\
&=&
A_{\varepsilon(\alpha_{0},\beta_{0})}\ldots	
A_{\varepsilon(\alpha_{n},\beta_{n})}S	
=
A_{\varepsilon(\alpha_{0},\beta_{0})}\ldots	
A_{\varepsilon(\alpha_{n-1},\beta_{n-1})}
\begin{pmatrix}
1&1&2\\
0&1&1\\
0&0&1
\end{pmatrix}
\\
&=&
\begin{pmatrix}
p''_{n}(\alpha,\beta)& p'_{n}(\alpha,\beta)& p_{n}(\alpha,\beta)+p'_{n}(\alpha,\beta)\\
q''_{n}(\alpha,\beta)& q'_{n}(\alpha,\beta)& q_{n}(\alpha,\beta)+q'_{n}(\alpha,\beta)\\
r''_{n}(\alpha,\beta)& r'_{n}(\alpha,\beta)& r_{n}(\alpha,\beta)+r'_{n}(\alpha,\beta)\\
\end{pmatrix}
.
\end{eqnarray*}
Thus, we have (\ref{bbb}).
We have (\ref{bb2})-(\ref{bb3}) in the similar manner.
\end{proof}

We shall give some theorems concerning matrices $A_{(i,j)}$.
We need some definitions.  Let
$\mathbb{P}_{2} \left( \mathbb{R} \right)$ 
be the projective space of dimension 2 over $\mathbb{R}$, i.e., \[
\mathbb{P}_{2} \left( \mathbb{R} \right) =
\left( \mathbb{R}^{3} \setminus \left\{ \bar{\bar{0}} \right\} \right) / \sim,
\]
where $\sim$ is an equivalence  relation defined by 
\[
\bar{\bar{x}} \sim \bar{\bar{y}} 
\quad
\left( \bar{\bar{x}}, \bar{\bar{y}} \in \mathbb{R}^{3} \setminus
\left\{ \bar{\bar{0}} \right\} \right)
\Leftrightarrow 0 \neq \exists c \in \mathbb{R}
\mbox{ such that } \bar{\bar{x}} = c \bar{\bar{y}}.
\]
We mean by $f: X -\rightarrow Y$ a "map" from a set $X$ to a set $Y$ with some exceptional elements $x \in X$ 
for which the value $f (x)$ is not defined.
For a matrix $A \in M_{3} \left( \mathbb{R} \right)$ 
(which denotes the set of $3 \times 3$ matrices
of real components), 
a map 
\[
A^{\mbox{\it proj}}: \mathbb{P}_{2} \left( \mathbb{R} \right)
- \rightarrow \mathbb{P}_{2} \left( \mathbb{R} \right)
\]
can be defined by 
\[
A^{\mbox{\it proj}} \left( \kappa \left( \bar{\bar{x}} \right) \right):= \kappa A \left( \bar{\bar{x}} \right),
\]
where 
$\kappa \left( \bar{\bar{x}} \right):=
\left\{
c \bar{\bar{x}} ~\left|~ 0 \neq c \in \mathbb{R}
\right.
\right\} \in \mathbb{P}_{2} \left( \mathbb{R} \right)$.
Notice that the map $A^{\mbox{\it proj}}$ is well-defined and 
\begin{equation}
\label{eq:proj1}
\left( A B \right)^{\mbox{\it proj}} = A^{\mbox{\it proj}} B^{\mbox{\it proj}}
\end{equation}
holds for $A, B \in M_{3} \left( \mathbb{R} \right)$.
We define two maps
$\pi: \mathbb{P}_{2} \left( \mathbb{R} \right) - \rightarrow 
\mathbb{R}^{2}$
and
$\iota: \mathbb{R}^{2}  \rightarrow 
\mathbb{R}^{3}$ by 
\begin{eqnarray*}
\pi \left( \kappa \left( \bar{\bar{x}} \right) \right) & := & 
\frac{1}{x^{(0)}} { x^{(1)} \choose x^{(2)}}
\quad \mbox{ for } 
\bar{\bar{x}} = ~^{t} \left( x^{(0)}, x^{(1)}, x^{(2)} \right) 
\in \mathbb{R}^{3},\\
\iota \left( \bar{x}  \right) & := & ~^{t} (1, x^{(1)}, x^{(2)}) 
\quad \mbox{ for } 
\bar{x}=(x^{(1)}, x^{(2)} )  
\in \mathbb{R}^{2}.
\end{eqnarray*}
Then, the linear fractional map $A^{\mbox{\it frac}}:
\mathbb{R}^{2} - \rightarrow \mathbb{R}^{2}$ 
$\left( A \in M_{3} \left( \mathbb{R} \right) \right)$ can be defined by 
\[
A^{\mbox{\it frac}} \left( \bar{x} \right) := 
\pi A^{\mbox{\it proj}} \kappa \iota \left( \bar{x} \right).
\]
Notice that this map is also well-defined
and the diagram (see Figure \ref{fig:diagram}) commutes. 
\begin{figure*}[hbtp]
\begin{center}
\includegraphics[width=5cm]{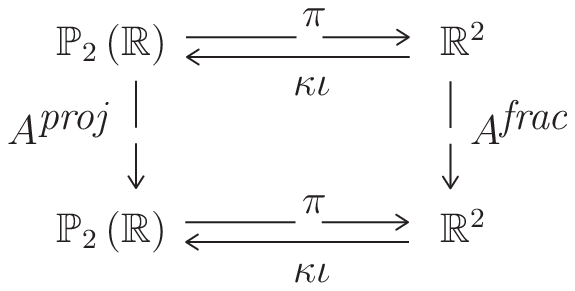}
\end{center}
\caption{
The commutative diagram with respect to  $A^{proj}$ and $A^{frac}$. 
}
\label{fig:diagram}
\end{figure*}

Hence, in view of (\ref{eq:proj1}), we get the following.

\begin{lemma}

\label{lem:AB}
\[
\left( AB \right)^{\mbox{\it frac}} = 
A^{\mbox{\it frac}} B^{\mbox{\it frac}}
\quad
\left( A, B \in M_{3} \left( \mathbb{R} \right) \right).
\]
\end{lemma}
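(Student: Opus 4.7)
The plan is to unwind both sides of the desired equality in terms of the four basic maps $\pi$, $\iota$, $\kappa$ and $(\cdot)^{\mbox{\it proj}}$, and then reduce everything to the already-established identity (\ref{eq:proj1}) by observing that $\kappa\circ\iota\circ\pi$ acts as the identity on $\mathbb{P}_{2}(\mathbb{R})$ wherever it is defined.

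More concretely, first I would simply expand the right-hand side using the definition of $A^{\mbox{\it frac}}$:
\[
A^{\mbox{\it frac}}\bigl(B^{\mbox{\it frac}}(\bar{x})\bigr)
=\pi\,A^{\mbox{\it proj}}\,\kappa\,\iota\bigl(\pi\,B^{\mbox{\it proj}}\,\kappa\,\iota(\bar{x})\bigr).
\]
The whole game is then the middle factor $\kappa\,\iota\,\pi$. I would verify directly from the definitions that for $\bar{\bar{x}} = {}^{t}(x^{(0)},x^{(1)},x^{(2)})$ with $x^{(0)}\neq 0$, one has $\iota\,\pi(\kappa(\bar{\bar{x}}))=(1/x^{(0)})\bar{\bar{x}}$, and hence $\kappa\,\iota\,\pi(\kappa(\bar{\bar{x}}))=\kappa(\bar{\bar{x}})$. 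In other words, $\kappa\,\iota\,\pi$ is the identity on its domain of definition inside $\mathbb{P}_{2}(\mathbb{R})$.

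With that step in place, the middle $\kappa\,\iota\,\pi$ in the displayed expression can be deleted, leaving
\[
A^{\mbox{\it frac}}\bigl(B^{\mbox{\it frac}}(\bar{x})\bigr)
=\pi\,A^{\mbox{\it proj}}\,B^{\mbox{\it proj}}\,\kappa\,\iota(\bar{x})
=\pi\,(AB)^{\mbox{\it proj}}\,\kappa\,\iota(\bar{x})
=(AB)^{\mbox{\it frac}}(\bar{x}),
\]
where the middle equality is exactly (\ref{eq:proj1}), and the final equality is the definition of $(AB)^{\mbox{\it frac}}$.

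The only real subtlety, which I would flag rather than grind through, is the bookkeeping of the partial-map exceptions: the three maps $A^{\mbox{\it proj}}$, $B^{\mbox{\it proj}}$ and $\pi$ each fail on a lower-dimensional exceptional set, so the equality is asserted on the domain where both sides are meaningful. The paper's convention of $f:X\dashrightarrow Y$ explicitly accommodates this, so no further hypothesis is required; one just needs to note that when $B^{\mbox{\it proj}}\kappa\iota(\bar{x})$ has a nonzero $0$-th coordinate, the identity $\kappa\iota\pi=\mbox{id}$ applies at that point, and this is precisely the condition for $A^{\mbox{\it frac}}(B^{\mbox{\it frac}}(\bar{x}))$ and $(AB)^{\mbox{\it frac}}(\bar{x})$ to be simultaneously defined.
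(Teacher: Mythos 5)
Your argument is correct and follows essentially the same route as the paper: the paper simply notes that the diagram relating $A^{\mbox{\it proj}}$ and $A^{\mbox{\it frac}}$ commutes and invokes (\ref{eq:proj1}), which is exactly the content you make explicit by checking that $\kappa\,\iota\,\pi$ is the identity on its domain in $\mathbb{P}_{2}(\mathbb{R})$ and then cancelling the middle factor. Your treatment of the partial-map domain issue is a faithful elaboration of what the paper leaves implicit.
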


We easily see following Theorem \ref{t0}. 

\begin{theorem} \label{t0}
For each $(i,j)\in Ind$, 
$A_{(i,j)}^{\mbox{\it frac}}\circ T_{
\left( i, j \right)}$
(resp. $T_{\left( i, j \right)}
\circ A_{(i,j)}^{\mbox{\it frac}}$)
is an identity map on $\triangle(i,j)$ (resp., $\Delta)$.
\end{theorem}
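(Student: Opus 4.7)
The plan is to verify the theorem by direct computation, one case for each of the six pairs $(i,j)\in Ind$. By Lemma \ref{lem:AB} and the commutative diagram of Figure \ref{fig:diagram}, the linear fractional map $A_{(i,j)}^{\mbox{\it frac}}$ is completely determined by the matrix action of $A_{(i,j)}$, and the matrices in \eqref{d2} have been chosen precisely to realize the inverse of $T_{(i,j)}$.

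First, for each $(i,j)\in Ind$ I would compute $A_{(i,j)}^{\mbox{\it frac}}(x,y)$ in closed form by applying $A_{(i,j)}$ to $\iota(x,y)={}^t(1,x,y)$ and then normalizing the first coordinate to $1$ via $\pi\kappa$, i.e.\ dividing the last two entries by the first. For instance, $A_{(1,2)}\,{}^t(1,x,y)={}^t(1+y,\,x+y,\,y)$, yielding $A_{(1,2)}^{\mbox{\it frac}}(x,y)=\bigl(\tfrac{x+y}{1+y},\tfrac{y}{1+y}\bigr)$; the other five expressions follow by the same one-step matrix multiplication.

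Next, for each pair I would verify the two identities by substitution. Continuing the $(1,2)$ case: setting $u=\tfrac{x-y}{1-y}$ and $v=\tfrac{y}{1-y}$, the elementary identities $u+v=\tfrac{x}{1-y}$ and $1+v=\tfrac{1}{1-y}$ give immediately $A_{(1,2)}^{\mbox{\it frac}}(u,v)=(x,y)$, proving $A_{(1,2)}^{\mbox{\it frac}}\circ T_{(1,2)}=\mathrm{id}$ on $\triangle(1,2)$; the reverse composition $T_{(1,2)}\circ A_{(1,2)}^{\mbox{\it frac}}=\mathrm{id}$ reduces to an analogous one-line simplification. Before these checks, one also needs to confirm $A_{(i,j)}^{\mbox{\it frac}}(\Delta)\subseteq\triangle(i,j)$ so that $T_{(i,j)}\circ A_{(i,j)}^{\mbox{\it frac}}$ is defined on all of $\Delta$; this amounts to verifying the single linear inequality defining $\triangle(i,j)$ on the image, which is immediate from the closed-form expression.

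The five remaining cases $(2,1), (0,1), (1,0), (0,2), (2,0)$ are handled identically, each reducing to a short rational simplification. I do not expect any genuine obstacle: the substance of the theorem is the case-by-case observation that the matrices in \eqref{d2} were built to invert the six piecewise fractional maps $T_{(i,j)}$ projectively, and Lemma \ref{lem:AB} together with the commutative diagram of Figure \ref{fig:diagram} has already absorbed all the conceptual bookkeeping, leaving only elementary arithmetic.
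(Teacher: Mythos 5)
Your proposal is correct and matches what the paper intends: the paper offers no written proof (it states only ``We easily see following Theorem \ref{t0}''), and the intended argument is exactly the case-by-case direct computation you describe, namely that each $A_{(i,j)}^{\mbox{\it frac}}$ inverts $T_{(i,j)}$ as a linear fractional map. Your sample verification for $(1,2)$ and the remark that one must also check $A_{(i,j)}^{\mbox{\it frac}}(\Delta)\subseteq\triangle(i,j)$ are both accurate.
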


For each $(\alpha,\beta)\in \Delta_K$ and $n\in {\Bbb Z}_{\geq 0}$
we define $\delta_n(\alpha,\beta)$ by
the set of all inner points in a triangle with the edge points  
$(q_n/p_n,r_n/p_n)$, $(q'_n/p'_n,r'_n/p'_n)$
and $(q''_n/p''_n,r''_n/p''_n)$.
From Lemma \ref{lem:AB} and Theorem \ref{t0} we have following:

\begin{theorem} \label{t1}
Let $(\alpha,\beta)\in \Delta_K$. For each $n\in {\Bbb Z}_{\geq 0}$, 
$(\alpha,\beta)\in \delta_n(\alpha,\beta)$ holds.
\end{theorem}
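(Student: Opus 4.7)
The plan is to identify $\delta_n$ as the $B_n^{\mathrm{frac}}$-image of the interior of the standard simplex $\Delta$, where $B_n := A_{\varepsilon(\alpha_0,\beta_0)}\cdots A_{\varepsilon(\alpha_{n-1},\beta_{n-1})}$ so that $M_n = B_n S$, and then to recognize $(\alpha,\beta)$ as the image of $(\alpha_n,\beta_n)\in\mathrm{int}(\Delta)$ under that map.

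First, I would observe that the three columns of $S$ are $(1,0,0)^{t}$, $(1,1,0)^{t}$, $(1,0,1)^{t}$, which are precisely $\iota$ applied to the three vertices $v_1=(0,0)$, $v_2=(1,0)$, $v_3=(0,1)$ of $\Delta$. Hence the $k$-th vertex of $\delta_n$ equals $\pi\kappa(B_n\iota(v_k)) = B_n^{\mathrm{frac}}(v_k)$. By Theorem \ref{t0}, each $A_{(i,j)}^{\mathrm{frac}}$ sends $\Delta$ into $\triangle(i,j)\subset\Delta$; iterating via Lemma \ref{lem:AB} gives $B_n^{\mathrm{frac}}(\Delta)\subset\Delta$, so the map never leaves the affine chart where $\pi$ is defined. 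Since projective maps send lines to lines, $B_n^{\mathrm{frac}}$ maps the triangle $\Delta$ bijectively onto the closed triangle with those three vertices; in particular $B_n^{\mathrm{frac}}(\mathrm{int}(\Delta))=\delta_n$.

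Next, I would prove by induction on $n$ the identity $(\alpha,\beta) = B_n^{\mathrm{frac}}(\alpha_n,\beta_n)$. The case $n=0$ is immediate since $B_0 = I$. For the inductive step, writing $(i_0,j_0):=\varepsilon(\alpha_n,\beta_n)$, Theorem \ref{t0} and the definition of $T$ give
\[
A_{(i_0,j_0)}^{\mathrm{frac}}(\alpha_{n+1},\beta_{n+1}) = A_{(i_0,j_0)}^{\mathrm{frac}}\bigl(T_{(i_0,j_0)}(\alpha_n,\beta_n)\bigr) = (\alpha_n,\beta_n),
\]
and Lemma \ref{lem:AB} then yields $B_{n+1}^{\mathrm{frac}}(\alpha_{n+1},\beta_{n+1}) = B_n^{\mathrm{frac}}(\alpha_n,\beta_n) = (\alpha,\beta)$.

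Combining the two steps: since $(\alpha_n,\beta_n)\in\Delta_K$ satisfies $\alpha_n,\beta_n>0$ and $\alpha_n+\beta_n<1$, we have $(\alpha_n,\beta_n)\in\mathrm{int}(\Delta)$, so $(\alpha,\beta) = B_n^{\mathrm{frac}}(\alpha_n,\beta_n) \in B_n^{\mathrm{frac}}(\mathrm{int}(\Delta)) = \delta_n$. The delicate point is the first step, where one must verify that the projective image of $\Delta$ is again a nondegenerate triangle, with no exit to infinity and the correct choice of bounded component bounded by the three image lines; each of these concerns is settled by the containment $B_n^{\mathrm{frac}}(\Delta)\subset\Delta$ together with the unimodularity (hence invertibility) of the $A_{(i,j)}$.
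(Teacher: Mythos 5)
Your proof is correct and takes the same route the paper intends: the paper presents Theorem \ref{t1} as an immediate consequence of Lemma \ref{lem:AB} and Theorem \ref{t0} without writing out the details, and your argument---identifying the vertices of $\delta_n$ with the images of the vertices of $\Delta$ under the linear fractional map of $B_n=A_{\varepsilon(\alpha_0,\beta_0)}\cdots A_{\varepsilon(\alpha_{n-1},\beta_{n-1})}$ via the columns of $S$, and proving $(\alpha,\beta)=B_n^{\mbox{\it frac}}(\alpha_n,\beta_n)$ by induction using Theorem \ref{t0} and Lemma \ref{lem:AB}---is exactly that deduction made explicit. The point you flag as delicate (that the projective image of $\Delta$ is the nondegenerate closed triangle on the three image vertices, with interior mapping to interior) is handled adequately by the containment $B_n^{\mbox{\it frac}}(\Delta)\subset\Delta$ and the invertibility of the $A_{(i,j)}$, so I see no gaps.
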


The following theorem describes an admissibility of the sequence
 $\{\varepsilon(\alpha_n,\beta_n)\}_{n=0,1,\ldots}$
obtained by the algorithm 
$(\Delta_K,T,\varepsilon)$.

\begin{theorem} \label{t2}

Let $(\alpha,\beta)=(\alpha_{0},\beta_{0})$ and $n\in {\Bbb Z}_{\geq 0}$.
Then, both
\begin{align*}
&\varepsilon(\alpha_{n+1},\beta_{n+1})\ne (i',\theta(\varepsilon(\alpha_n,\beta_n)))\\
&\text{and}\\
&\varepsilon(\alpha_{n+1},\beta_{n+1})\ne (\theta(\varepsilon(\alpha_n,\beta_n)),i')
\end{align*}
hold, where $\varepsilon(\alpha_{n},\beta_{n})=(i',j')$ and
$\theta(i,j):=k\in \{0,1,2\}$ 
with $k\ne i$ and $k\ne j$, 
so that there are 12 forbidden words
$\varepsilon(\alpha_{n},\beta_{n})\varepsilon(\alpha_{n+1},\beta_{n+1})$ (see Table \ref{table:forbidden}).
\end{theorem}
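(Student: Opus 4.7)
\emph{Proof plan.} My plan is to reduce the admissibility statement to a ratio-preservation property of the auxiliary multiplicative function
\[ \phi(x) := x^r / |N(x)| \]
defined for $x \in K$, $x > 0$. Since $|N(\cdot)|$ is multiplicative and $x \mapsto x^r$ is multiplicative on positive reals, $\phi$ is multiplicative on positive elements of $K$. Writing $c_0 := 1 - \alpha - \beta$, $c_1 := \alpha$, $c_2 := \beta$, the value function can be recast as $v_r(\alpha, \beta, i, j) = \phi(c_i)\phi(c_j)$. Hence the pair $\{i_0, j_0\}$ maximizing $v_r$ is precisely the complement of the index $\ell$ minimizing $\phi(c_\ell)$. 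Because $1, \alpha, \beta$ are $\mathbb{Q}$-linearly independent, the three coordinates $c_0, c_1, c_2$ lie in $K \setminus \mathbb{Q}$ and are pairwise distinct, so Lemma \ref{aaa} forces the $\phi(c_\ell)$ to be pairwise distinct and the minimizer $\ell$ to be unique.

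Next I would verify, by direct substitution in each of the six definitions of $T_{(i',j')}$, the uniform transformation rule
\[ c_{i'}^{\mathrm{new}} = \frac{c_{i'} - c_{j'}}{1 - c_{j'}}, \qquad c_{j'}^{\mathrm{new}} = \frac{c_{j'}}{1 - c_{j'}}, \qquad c_{k}^{\mathrm{new}} = \frac{c_{k}}{1 - c_{j'}} \qquad (k := \theta(i', j')), \]
using the identity $1 - c_{j'} = c_{i'} + c_k$. The crucial feature is that $c_{j'}$ and $c_k$ are rescaled by the \emph{same} factor $1/(1 - c_{j'})$. Multiplicativity of $\phi$ then yields the preserved ratio
\[ \frac{\phi(c_{j'}^{\mathrm{new}})}{\phi(c_k^{\mathrm{new}})} = \frac{\phi(c_{j'})}{\phi(c_k)}. \]

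To conclude, $\varepsilon(\alpha_n, \beta_n) = (i', j')$ forces $\{i', j'\}$ to be the maximizing pair at step $n$, so $c_k$ has the strictly smallest $\phi$-value and $\phi(c_{j'}) > \phi(c_k)$. The preserved-ratio identity transfers this strict inequality to step $n+1$, giving $\phi(c_{j'}^{\mathrm{new}}) > \phi(c_k^{\mathrm{new}})$, so $c_{j'}$ is not the $\phi$-smallest coordinate at step $n+1$. Consequently the maximizing pair at step $n+1$ cannot equal $\{i', k\}$ (the complement of $j'$), ruling out both orderings $(i', k)$ and $(k, i')$ for $\varepsilon(\alpha_{n+1}, \beta_{n+1})$. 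Summing over the six choices of $(i', j') \in Ind$ yields $6 \times 2 = 12$ forbidden length-two words, matching Table \ref{table:forbidden}. The main obstacle is the case-by-case algebra behind the uniform transformation rule, since the six maps $T_{(i',j')}$ use different-looking denominators ($1-\beta$, $1-\alpha$, $\alpha+\beta$), but the identity $1 - c_{j'} = c_{i'} + c_k$ makes each verification routine.
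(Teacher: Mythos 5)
Your proposal is correct and follows essentially the same route as the paper: the paper's proof of the $(2,1)$ case derives exactly your preserved-ratio identity by observing that $\gamma_{n+1}$, $\alpha_{n+1}$, $\beta_{n+1}$ all carry the common factor $1/(1-\alpha_n)$, which cancels in the quotients $x^{r}/\left|N(x)\right|$, transferring the strict inequality between $\phi(c_{j'})$ and $\phi(c_k)$ from step $n$ to step $n+1$ and yielding the contradiction; the remaining cases are declared analogous. Your only departure is cosmetic but welcome: by isolating the multiplicative function $\phi$ and the uniform rule $c_{j'}^{\mathrm{new}}=c_{j'}/(1-c_{j'})$, $c_k^{\mathrm{new}}=c_k/(1-c_{j'})$ with $1-c_{j'}=c_{i'}+c_k$, you handle all six cases at once instead of one representative case.
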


\begin{table}[hbtp]
\caption{
The forbidden words of length $2$.} 
\label{table:forbidden}
\begin{center}
\begin{tabular}{|c|c|}
\hline
$\varepsilon(\alpha_{n},\beta_{n})$&
$\varepsilon(\alpha_{n+1},\beta_{n+1})$\\
\hline
$(1,2)$&$(0,1), (1,0)$\\
\hline
$(2,1)$&$(0,2), (2,0)$\\
\hline
$(0,1)$&$(0,2), (2,0)$\\
\hline
$(1,0)$&$(1,2), (2,1)$\\
\hline
$(0,2)$&$(0,1), (1,0)$\\
\hline
$(2,0)$&$(1,2), (2,1)$\\
\hline
\end{tabular}
\end{center}
\end{table}

\begin{proof}
First, we suppose that $\varepsilon(\alpha_n,\beta_n)=(2,1)$.
Let
\(
\gamma_{n}= 1 -\alpha_{n}-\beta_{n}.
\)
From the definition of the {\it value} function,
we have
\begin{equation}
 \label{eq:22}
\frac{ \left| \alpha_{n}^{r} \beta_{n}^{r}\right|}{
 \left| N \left( \alpha_{n}  \right) N \left(  \beta_{n} \right) \right|}
>
\frac{ \left|  
\alpha_{n}^{r} \gamma_{n}^{r} \right|}{
 \left| 
 N \left( \alpha_{n} \right) N \left( \gamma_{n} \right) 
 \right|}, 
 \quad
\frac{ \left| \alpha_{n}^{r}  \beta_{n}^{r} \right|}{
 | N \left( \alpha_{n}  
  \right) N \left(  \beta_{n} \right) |}
>
\frac{ \left| \beta_{n}^{r} \gamma_{n}^{r}  \right|}{
 | N \left( \beta_{n}  \right) N \left(  \gamma_{n}  \right) |}.
\end{equation}
(\ref{eq:22}) is equivalent to
\begin{equation}
\frac{ \left| \beta_{n} \right|^{r}}{
 \left| N \left( \beta_{n} \right) \right|}
>
\frac{ \left| \gamma_{n}  \right|^{r}}{
 \left| N \left( \gamma_{n}  \right) \right|}, 
 \quad
\frac{ \left| \alpha_{n}  \right|^{r}}{
 \left| N \left( \alpha_{n}  \right) \right|}
>
\frac{ \left| \gamma_{n}  \right|^{r}}{
 \left| N \left( \gamma_{n}  \right) \right|}.
 \label{eq:2}
\end{equation}
Moreover, let $\left( \alpha_{n+1}, \beta_{n+1} \right)$ and
$\gamma_{n+1}$ be
\begin{align*}
\left( \alpha_{n+1}, \beta_{n+1} \right)
&
:= T \left( \alpha_{n}, \beta_{n} \right)
=
\left(
\frac{\alpha_{n}}{1-\alpha_{n}}, 
\frac{\beta_{n}-\alpha_{n}}{1-\alpha_{n}}
\right), \\
\gamma_{n+1} 
&
:=1-\alpha_{n+1}-\beta_{n+1} = 
\frac{{\gamma}_{n}}{1-\alpha_{n}}.
\end{align*}
We suppose that
$\varepsilon(\alpha_{n+1},\beta_{n+1})=(2,\theta(\varepsilon(\alpha_n,\beta_n)))$
or $\varepsilon(\alpha_{n+1},\beta_{n+1})=$\\
$(\theta(\varepsilon(\alpha_n,\beta_n)),2)$, i.e., 
$\varepsilon(\alpha_{n+1},\beta_{n+1})\in \{(2,0),(0,2)\}$.
Then, from the analogous above discussion, we get
\begin{align*}
\frac{ \left| \beta_{n+1}^{r} 
\gamma_{n+1}^{r} \right|}{
 \left| 
N \left( \beta_{n+1} \right) N \left( \gamma_{n+1} \right) \right|}
&>
\frac{ \left| \alpha_{n+1}^{r} \beta_{n+1}^{r}\right|}{
 \left| N \left( \alpha_{n+1} \right) 
 N \left( \beta_{n+1} \right) \right|}, \\
\frac{ \left| 
\beta_{n+1}^{r} \gamma_{n+1}^{r} \right|}{
 \left| 
 N \left( \beta_{n+1} \right) N \left( \gamma_{n+1} \right) \right|}
&>
\frac{ \left| 
\alpha_{n+1}^{r} \gamma_{n+1}^{r}  \right|}{
 \left| 
 N \left( \alpha_{n+1} \right) N \left( \gamma_{n+1} \right) \right|},
\end{align*}
i.e., 
\begin{equation}
\frac{ \left| \gamma_{n+1}^{r} \right|}{
 \left| N \left( 
 \gamma_{n+1} \right) \right|}
>
\frac{ \left|\alpha_{n+1}^{r}  \right|}{
 \left| N \left( \alpha_{ n+1}  \right) \right|}, 
 \quad
\frac{ \left| \beta_{n+1}^{r}  \right|}{
 \left| N \left( \beta_{ n+1}  \right) \right|}
>
\frac{ \left| \alpha_{n+1}^{r}  \right|}{
 \left| N \left( \alpha_{n+1}  \right) \right|}.
 \label{eq:3}
\end{equation}
From $N \left( \alpha \beta \right)=N \left( \alpha \right)
N \left( \beta \right)$, (\ref{eq:3}) is written by
\[
\frac{ \left| 
\dfrac{\gamma_{n}}{1-\alpha_{n}} \right|^{
r}}{
 \left| N \left( 
 \dfrac{\gamma_{n}}{1-\alpha_{n}} \right) \right|}
>
\frac{ \left| \dfrac{\alpha_{n}}{1-\alpha_{n}} \right|^{r}}{
 \left| N \left( \dfrac{\alpha_{ n}}{1-\alpha_{ n}} \right) \right|},
 \quad
\frac{ \left| 
\dfrac{\beta_{n}-\alpha_{n}}{1-\alpha_{n}} \right|^{r}}{
 \left| N \left( 
 \dfrac{
 \beta_{n}
 -\alpha_{n}}{1-\alpha_{n}} 
 \right) \right|}
>
\frac{ \left| 
                  \dfrac{
                             \alpha_{n}}{
                             1-\alpha_{n}} 
\right|^{r}}{
\left| N \left( 
\dfrac{\alpha_{n}}{1-\alpha_{n}} \right) \right|},
 \]
i.e., 
\begin{equation}
\frac{ \left| 
\gamma_{n} \right|^{r}}{
 \left| N \left( 
 \gamma_{n} \right) \right|}
>
\frac{ \left| \alpha_{n}  \right|^{r}}{
 \left| N \left( \alpha_{n}  \right) \right|}, 
 \quad
\frac{ \left| 
\beta_{n} - \alpha_{n}  \right|^{r}}{
 \left| N \left( 
 \beta_{n} - \alpha_{n}  \right) \right|}
>
\frac{ \left| \alpha_{n}  \right|^{r}}{
 \left| N \left( \alpha_{n}  \right) \right|}.
 \label{eq:4}
\end{equation}
But, (\ref{eq:4}) contradicts  (\ref{eq:2}).
Therefore,  
\begin{align*}
\varepsilon(\alpha_{n+1},\beta_{n+1})\ne(2,\theta(\varepsilon(\alpha_n,\beta_n))),
(\theta(\varepsilon(\alpha_n,\beta_n)),2)
\end{align*}
holds.
The other cases can be proved analogously.
\end{proof}

Theorem \ref{t2} says that there are some forbidden words $\left( i, j \right)$ in the sequences $\{\varepsilon(\alpha_n,\beta_n)\}_{n=0}^{\infty}$ obtained by the algorithm $(\Delta_K,T,\varepsilon)$. On the other hand, there exists $(\alpha,\beta)\in \Delta_K$
such that
the other words of length $2$ except for the forbidden words eventually appear 
in the 
sequences $\{\varepsilon(\alpha_n,\beta_n)\}_{n=0}^{\infty} $ for any real cubic field $K$: 

\begin{theorem} \label{t3}
For  $(i,j),(k,l)\in Ind,$ if $(k,l)\ne (i,\theta(i,j))$ and
$(k,l)\ne (\theta(i,j),i)$, then
there exists $(\alpha,\beta)\in \Delta_K$ such that
$\varepsilon_K(\alpha,\beta)=(i,j)$ and $\varepsilon_K\left(T_K(\alpha,\beta)\right)=(k,l)$.
\end{theorem}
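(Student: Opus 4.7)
The strategy is to mirror the proof of Theorem \ref{t2}, which derived contradictions in the forbidden cases, and to show that for each of the 24 allowed pairs $((i,j),(k,l))$ the corresponding system of strict inequalities is instead consistent and realized by a point of $\Delta_K$. Following the calculation used to prove Theorem \ref{t2}, I would first analyze how $T_{(i,j)}$ transforms the three quantities $u(z):=z^{r}/|N(z)|$ for $z\in\{\alpha,\beta,\gamma\}$ with $\gamma:=1-\alpha-\beta$: each $T_{(i,j)}$ rescales all three by a common positive factor and replaces exactly one of them by $u$ applied to a linear combination of $\alpha,\beta,\gamma$ (for example, $T_{(1,2)}$ replaces $u(\alpha)$ by $u(\alpha-\beta)$). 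Consequently $\varepsilon(T_{(i,j)}(\alpha,\beta))$ is determined by the sub-triangle of $\Delta$ hosting $T_{(i,j)}(\alpha,\beta)$ together with which of the three new $u$-values is smallest.

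For each allowed $((i,j),(k,l))$, I would then transcribe the conjunction ``$\varepsilon(\alpha,\beta)=(i,j)$ and $\varepsilon(T_{(i,j)}(\alpha,\beta))=(k,l)$'' into a finite list of strict inequalities: two linear inequalities on $(\alpha,\beta)$ selecting the correct sub-triangles of $\triangle(i,j)$ and of $\triangle(k,l)$ (pulled back by $T_{(i,j)}$), together with strict inequalities among at most four $u$-values determining which pair of $\{\alpha,\beta,\gamma\}$ is maximal at each step. The contradiction derivations in Theorem \ref{t2} eliminate precisely the two forbidden successors of $(i,j)$; for the remaining four the analogous chain of comparisons does not close up, so the system is logically consistent. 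To upgrade consistency to an actual point of $\Delta_K$, I would fix a $\mathbb{Q}$-basis $\{1,\omega,\omega^{2}\}$ of $K$ and parametrize $(\alpha,\beta)\in K^{2}$ by its six rational coordinates; the $u(z)$ then become continuous functions on $\mathbb{R}^{6}$ (the norms being polynomials), so the constraint system cuts out an open semi-algebraic subset $W\subset\mathbb{R}^{6}$. For any prescribed real target $(\alpha_{0},\beta_{0})$ in the interior of the relevant geometric subregion, the remaining coordinate freedom lets $(|N(\alpha)|,|N(\beta)|,|N(\gamma)|,|N(\alpha\pm\beta)|)$ range over an open subset of $\mathbb{R}_{>0}^{4}$, so $W\neq\emptyset$. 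Density of $\mathbb{Q}^{6}$ in $\mathbb{R}^{6}$ then yields a rational parameter point in $W$, and avoiding the countable union of hyperplanes on which $1,\alpha,\beta$ fail to be $\mathbb{Q}$-linearly independent delivers $(\alpha,\beta)\in\Delta_K$ with the required properties.

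The main obstacle is the case analysis for the 24 allowed pairs: although the forbidden-case reasoning of Theorem \ref{t2} generalizes uniformly, verifying logical consistency (and not merely non-contradiction) for each pair requires tracking how the replace-and-scale action of each $T_{(i,j)}$ interacts with the specific sub-triangle conditions defining the label $(k,l)$. Symmetries --- permutations of $\{0,1,2\}$ and the involution $(i,j)\leftrightarrow(j,i)$ acting on $Ind\times Ind$ --- reduce this to a small number of canonical cases closely parallel to the $(2,1)$-to-$\{(2,0),(0,2)\}$ analysis carried out explicitly in the proof of Theorem \ref{t2}.
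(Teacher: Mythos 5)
Your overall architecture matches the paper's: both proofs reduce ``$\varepsilon_K(\alpha,\beta)=(i,j)$ and $\varepsilon_K(T_K(\alpha,\beta))=(k,l)$'' to an open system of strict inequalities (sub-triangle membership plus comparisons of the quantities $u(z)=|z|^{r}/|N(z)|$, using that each $T_{(i,j)}$ rescales all three $u$-values by a common factor and replaces one of them), both invoke a density statement to produce a point of $\Delta_K$ inside a non-empty open set, and both use permutation symmetry to cut down the case analysis. However, there is a genuine gap at the single step that carries all the content: you never actually establish that the open set is non-empty. Saying that for the $24$ allowed pairs ``the analogous chain of comparisons does not close up, so the system is logically consistent'' only shows that the particular contradiction derivation from Theorem \ref{t2} fails; non-derivability of a contradiction by one method is not satisfiability. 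The paper supplies exactly this missing certificate by exhibiting explicit witness points: Table \ref{table:A} in $K_1=\mathbb{Q}(\lambda)$, $\lambda^3-5\lambda+1=0$, realizes $(2,1)$ followed by each of its four allowed successors, and these witnesses are then transported to an arbitrary totally real $K$ via the density of $\rho_K(\Delta_K)$ in $\Delta\times\mathbb{R}^4$ (the inequalities being open in the point and its conjugates).

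Your substitute for the witness --- that for fixed $(\alpha_0,\beta_0)$ the vector $\left(|N(\alpha)|,|N(\beta)|,|N(\gamma)|,|N(\beta\pm\alpha)|\right)$ ranges over \emph{an} open subset of $\mathbb{R}^4_{>0}$ --- does not close the gap, because the four norms are not independent: the conjugates of $\gamma=1-\alpha-\beta$ and of $\beta-\alpha$ are determined by those of $\alpha$ and $\beta$ (e.g.\ one cannot make $\gamma'$ large while keeping both $\alpha'$ and $\beta'$ bounded, since $\alpha'+\beta'+\gamma'=1$), so one must still verify that the accessible open set meets the region cut out by the required inequalities. This verification is precisely what the explicit tables perform. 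The issue is sharper still for cubic fields with a complex embedding, where only two complex parameters control all four norms and the constraints are tighter; the paper treats this case separately with a second witness table (Table \ref{table:B}) in $\mathbb{Q}(\sqrt[3]{5})$, whereas your proposal does not address it at all. To repair the argument you would need, for each canonical allowed pair, either an explicit $(\alpha,\beta)$ in some cubic field satisfying the system (as the paper does) or a concrete choice of approximate conjugate data demonstrating compatibility of the norm inequalities with the geometric ones, done separately for the totally real and complex cases.
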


\begin{proof}
Let $\lambda$ be the root of $x^3-5x+1$ with $\lambda>1$.
Let $K_1={\Bbb Q}(\lambda)$.
We note that $K_1$ is a totally real cubic field.
By the direct calculation one can check  Table 
\ref{table:A} given below:

\begin{table}[hbtp]
\caption{(totally real case) The words of length 
$2$ which are 
not forbidden by Table 
\ref{table:forbidden}
eventually occur.}
\label{table:A}
\begin{center}

\begin{tabular}{c|c|c|c}
\hline
\text{$(\zeta,\eta)$} 
&
\text{$\varepsilon_{K_1}(\zeta,\eta)$}
&
\text{$T_{K_1}(\zeta,\eta)$}
&
\text{$\varepsilon_{K_1} 
(T_{K_1} (\zeta,\eta) )$}\\
\hline
$(5/39+7\lambda/39-2\lambda^2/39,$&(2,1)&
$(6/37+6\lambda/37-\lambda^2/37,$&(2,1)\\
$2/39-5\lambda/39+7\lambda^2/39)$&
&
$-5/37-5\lambda/37+7\lambda^2/37)$&
\\
\hline
$(6/37+6\lambda/37-\lambda^2/37,$&(2,1) &
$(1/5+\lambda/5,$&(1,0)\\
$-5/37-5\lambda/37+7\lambda^2/37)$&&
$-2/5-\lambda/5+\lambda^2/5)$&
\\
\hline
$(-16/15+\lambda/15+4\lambda^2/15,$&(2,1)&
$(-9/17+\lambda/17+3\lambda^2/17,$&(0,1)\\
$4/5+\lambda/5-\lambda^2/5)$&&
$15/17+4\lambda/17-5\lambda^2/17)$&
\\
\hline
$(4/5+\lambda/5-\lambda^2/5,$&(2,1)&
$(5-\lambda^2,$&(1,2)\\
$-4/15+4\lambda/15+\lambda^2/15)$&&
$-19/3+\lambda/3+4\lambda^2/3)$&
\\
\hline
\end{tabular}

%
%
\end{center}
\end{table}

We suppose that $K$ is a totally real cubic field.
Let $\,_1\rho_K,\,_2\rho_K$ be  distinct    embeddings from $K$ to ${\Bbb R}$
over  ${\Bbb Q}$ different from the non trivial embedding. 
We define a mapping $\rho_K$ from $\Delta_K$ to $\Delta\times {\Bbb R}^4$ as follows. For $(\alpha,\beta)\in \Delta_K$,    
\begin{align*}
\rho_K(\alpha,\beta):=(\alpha,\beta,\,_1\rho_K(\alpha),\,_1\rho_K(\beta),\,_2\rho_K(\alpha),\,_2\rho_K(\beta)).
\end{align*}
Then, it is not difficult to see that
$\rho_K(\Delta_K)$ is dense in $\Delta\times {\Bbb R}^4$ by virtue of algebraic number theory (for example see  Chapter 1 in \cite{N}).
We put $(\alpha',\beta')=(5/39+7\lambda/39-2\lambda^2/39,2/39-5\lambda/39+7\lambda^2/39)$
which is  in Table \ref{table:A}. 
Then, by the analogous  discussion in Theorem \ref{t2}, one  can show  
\begin{equation}
\frac{ \left| \beta' \right|^{r}}{
 \left| N \left( \beta' \right) \right|}
>
\frac{ \left| \gamma'  \right|^{r}}{
 \left| N \left( \gamma'  \right) \right|}, 
 \quad
\frac{ \left| \alpha' \right|^{r}}{
 \left| N \left( \alpha'  \right) \right|}
>
\frac{ \left| \gamma'  \right|^{r}}{
 \left| N \left( \gamma'  \right) \right|}
 \label{eq:x1}
\end{equation}
and
\begin{equation}
\beta'>2\alpha', 
\quad
\frac{ \left| 
\beta' - \alpha'  \right|^{r}}{
 \left| N \left( 
 \beta' - \alpha'  \right) \right|}
>
\frac{ \left| \gamma'  \right|^{r}}{
 \left| N \left( \gamma'  \right) \right|},
 \label{eq:x2}
\end{equation}
where $\gamma'=1-\alpha'-\beta'$.
We see that there exists $\delta>0$ such that
 $|x-\alpha'|<\delta$, $|y-\beta'|<\delta$,
 $|x'-\,_1\rho_{K_1}(\alpha')|<\delta$,
$|y'-\,_1\rho_{K_1}(\beta')|<\delta$,\\
 $|x''-\,_2\rho_{K_1}(\alpha')|<\delta$ and
$|y''-\,_2\rho_{K_1}(\beta')|<\delta$ implies 
\begin{equation}
\frac{ \left| y \right|^{r}}{
 \left| yy'y'' \right|}
>
\frac{ \left| z  \right|^{r}}{
 \left| zz'z'' \right|}, 
 \quad
\frac{ \left| x \right|^{r}}{
 \left| xx'x'' \right|}
>
\frac{ \left| z  \right|^{r}}{
 \left| zz'z'' \right|}
 \label{eq:x3}
\end{equation}
and
\begin{equation}
y>2x,
\quad
\frac{ \left| 
y - x  \right|^{r}}{
 \left| (y-x)(y'-x')(y''-x'') \right|}
>
\frac{ \left| z  \right|^{r}}{
 \left| zz'z'' \right|},
 \label{eq:x4}
\end{equation}
where $z=1-x-y$, $z'=1-x'-y'$ and $z''=1-x''-y''$
for every $(x,y,x',y',x'',y'')\in \Delta\times {\Bbb R}^4$.
Since $\rho_K(\Delta_K)$ is dense in $\Delta\times {\Bbb R}^4$, 
there exists an element $(\alpha,\beta)\in \Delta_K$
such that
\begin{equation}
\frac{ \left| \beta \right|^{r}}{
 \left| N \left( \beta \right) \right|}
>
\frac{ \left| \gamma  \right|^{r}}{
 \left| N \left( \gamma  \right) \right|}, 
 \quad
\frac{ \left| \alpha \right|^{r}}{
 \left| N \left( \alpha  \right) \right|}
>
\frac{ \left| \gamma  \right|^{r}}{
 \left| N \left( \gamma  \right) \right|}
 \label{eq:x5}
\end{equation}
and
\begin{equation}
\beta>2\alpha,
\quad
\frac{ \left| 
\beta - \alpha  \right|^{r}}{
 \left| N \left( 
 \beta - \alpha  \right) \right|}
>
\frac{ \left| \gamma  \right|^{r}}{
 \left| N \left( \gamma  \right) \right|},
 \label{eq:x6}
\end{equation}
where $\gamma=1-\alpha-\beta$.
From (\ref{eq:x5}) and (\ref{eq:x6}), it follows  
$\varepsilon_{K}(\alpha,\beta)=(2,1)$ and
$\varepsilon_{K}(T_{K}(\alpha,\beta))=(2,1)$.
By the analogous above discussion, we see that
for each $(i,j)\in \{(1,2),(1,0),(0,1)\}$ 
there exists $(\alpha'',\beta'')\in \Delta_K$
such that $\varepsilon_{K}(\alpha'',\beta'')=(2,1)$ and
$\varepsilon_{K}(T_{K}(\alpha'',\beta''))=(i,j)$.
By applying permutations of the coordinates of $(\gamma,\alpha,\beta)$
we get Theorem \ref{t3} for the totally real cubic field $K$.
We consider the case where $K$ has complex embeddings.
Let $\mu$ be the real root of $x^3-5$.
Put $K_2={\Bbb Q}(\mu)$.
The direct calculation implies   Table \ref{table:B}.

\begin{table}[hbtp]
\caption{(not totally real case) The words of length $2$ which are 
not forbidden by Table 
\ref{table:forbidden}
eventually occur.}
\label{table:B}
\begin{center}
\begin{tabular}{c|c|c|c}
\hline
\multicolumn{1}{c|}{\text{$(\zeta,\eta)$}}&
\multicolumn{1}{c|}{\text{$\varepsilon_{K_2}(\zeta,\eta)$}}&\multicolumn{1}{c|}{\text{$T_{K_2}(\zeta,\eta)$}}&
\multicolumn{1}{c}{\text{$\varepsilon_{K_2}(T_{K_2}(\zeta,\eta))$}}
\\
\hline
$(1/4+\mu/4-3\mu^2/20,$&(2,1)&
$(-2/17+6\mu/17-\mu^2/17,$&(2,1)\\
$\mu^2/5)$&
&
$8/17-7\mu/17+4\mu^2/17)$&
\\
\hline
$(-2/17+6\mu/17-\mu^2/17,$&(2,1)&
$(-3/26+7\mu/26+\mu^2/26,$&(1,0)\\
$8/17-7\mu/17+4\mu^2/17)$&&
$10/13-6\mu/13+\mu^2/13)$&
\\
\hline
$(-1/6+\mu/6+\mu^2/30,$&(2,1)&
$(-1/11+3\mu/22+\mu^2/22,$&(0,1)\\
$11/12-5\mu/12+7\mu^2/60)$&&
$10/11-4\mu/11+\mu^2/22)$&
\\
\hline
$(1-\mu^2/5,$&
(2,1)&
$(-1+\mu,$&
(1,2)\\
$1/2+\mu/2-3\mu^2/10)$&
&
$-1/2-\mu/2+\mu^2/2)$&
\\
\hline
\end{tabular}

\end{center}
\end{table}

Using Table \ref{table:B}, we can show Theorem \ref{t3} 
for the case where $K$ is not a totally real cubic field. 
\end{proof}

For the periodic continued fraction obtained by this algorithm, we have the
following   Proposition \ref{P1}, which can be shown by using Theorem \ref{th:primitive}  
in a way similar to Perron \cite{p}. 
We denote  by $\Delta_{K}^{\mathcal{P}er}=\Delta_{K,r}^{\mathcal{P}er}$ the set of the periodic points of the 
transformation $T=T_{K,r}$, i.e.,
\begin{equation}
\label{al:per}
\Delta_{K}^{\mathcal{P}er}=
\Delta_{K,r}^{\mathcal{P}er}:=
\left\{
\left( \alpha,\beta \right) \in \Delta_K \left|
\begin{array}{l}
\mbox{ there exist }
m,n \in {\Bbb Z}_{>0} \\
\mbox{ such that }
m \neq n 
\mbox{ and }\\
T^{m}_{K,r}
\left(
\alpha,\beta
\right)=T^{n}_{K,r}
\left(
\alpha,\beta \right)
\end{array}
\right.
\right\}.
\end{equation}

\begin{proposition}\label{P1}
Let $(\alpha,\beta)\in\Delta_{K}^{\mathcal{P}er}$.
Then, there exists a constant $c(\alpha,\beta)>0$ and $\eta(\alpha,\beta)>0$ 
such that $\eta(\alpha,\beta)\leq \frac{3}{2}$  and
\[
\begin{array}{lcl}
\left| \alpha-\dfrac{q_n}{p_n} \right|\leq \dfrac{c(\alpha,\beta)}{p_n^{\eta(\alpha,\beta)}}, & \quad &
\left| \beta-\dfrac{r_n}{p_n} \right|\leq \dfrac{c(\alpha,\beta)}{p_n^{\eta(\alpha,\beta)}},\\[10pt]
\left| \alpha-\dfrac{q'_n}{p'_n}\right|\leq \dfrac{c(\alpha,\beta)}{(p'_n)^{\eta(\alpha,\beta)}}, &\quad &
\left| \beta-\dfrac{r'_n}{p'_n}\right|\leq \dfrac{c(\alpha,\beta)}{(p'_n)^{\eta(\alpha,\beta)}},\\[10pt]
\left| \alpha-\dfrac{q''_n}{p''_n}\right|\leq \dfrac{c(\alpha,\beta)}{(p''_n)^{\eta(\alpha,\beta)}},&\quad &
\left| \beta-\dfrac{r''_n}{p''_n}\right|\leq \dfrac{c(\alpha,\beta)}{(p''_n)^{\eta(\alpha,\beta)}}
\end{array}
\]
hold.
Furthermore, $\eta(\alpha,\beta)= \frac{3}{2}$ holds
if and only if $K$ is not a totally real cubic field.
\end{proposition}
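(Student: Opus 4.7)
The plan is to follow Perron's spectral approach: exploit the periodicity to reduce the convergence question to an eigenvalue analysis of the period matrix $\mathcal{P}er$, and then split into the totally real and complex cases according to the shape of its spectrum.

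First I would set up the period. By definition of $\Delta_K^{\mathcal{P}er}$ there is a preperiod $n_0$ and a period length $L$ so that $T^{n_0+L}(\alpha,\beta)=T^{n_0}(\alpha,\beta)$, and the associated product
\[
\mathcal{P}er := A_{\varepsilon(\alpha_{n_0},\beta_{n_0})}\cdots A_{\varepsilon(\alpha_{n_0+L-1},\beta_{n_0+L-1})}
\]
is unimodular (each $A_{(i,j)}$ has determinant $\pm 1$) and, by Theorem \ref{th:primitive}, primitive on the invariant cone generated by the columns of $M_{n_0}$. Perron--Frobenius then yields a simple real eigenvalue $\lambda_1>1$ of $\mathcal{P}er$ strictly dominating the moduli of the remaining eigenvalues $\lambda_2,\lambda_3$, with eigendirection $^t(1,\alpha,\beta)$ (the fixed direction of the projectivized dynamics, cf.\ Theorem \ref{t0}). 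Because $1,\alpha,\beta$ are $\mathbb{Q}$-linearly independent, the characteristic polynomial of $\mathcal{P}er$ is irreducible of degree $3$, so $\lambda_2,\lambda_3$ are distinct, and $\lambda_1\lambda_2\lambda_3=\pm 1$ by unimodularity.

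Next I would read off the approximation rate. Decomposing each column of $M_n$ in the (complexified) eigenbasis of $\mathcal{P}er$, for $n=n_0+kL$ the dominant part grows like $c\,\lambda_1^k\,{}^t(1,\alpha,\beta)$ while the transverse part is $O(\max(|\lambda_2|,|\lambda_3|)^k)$. Thus $p_n,p'_n,p''_n\asymp\lambda_1^k$, and dividing by the first coordinate gives
\[
\Bigl|\alpha-\tfrac{q_n}{p_n}\Bigr|,\ \Bigl|\beta-\tfrac{r_n}{p_n}\Bigr|\;=\;O\!\left(\left(\tfrac{\max(|\lambda_2|,|\lambda_3|)}{\lambda_1}\right)^{\!k}\right)\;=\;O\!\left(\tfrac{1}{p_n^{\eta}}\right),
\]
with $\eta=1-\log\max(|\lambda_2|,|\lambda_3|)/\log\lambda_1>0$. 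The remaining indices are absorbed in the constant because the within-period matrix products form a finite, hence bounded, family; and the same argument applies verbatim to the columns $(p'_n,q'_n,r'_n)$ and $(p''_n,q''_n,r''_n)$.

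Finally I would identify $\eta$ in the two cases. Since $|\lambda_2\lambda_3|=1/\lambda_1$, if $K$ has a pair of complex embeddings then $\lambda_3=\overline{\lambda_2}$ forces $|\lambda_2|=|\lambda_3|=\lambda_1^{-1/2}$, hence $\eta=3/2$; whereas if $K$ is totally real then $\lambda_2,\lambda_3\in\mathbb{R}$ are distinct (no repeated roots of an irreducible cubic), so strictly $\max(|\lambda_2|,|\lambda_3|)>\lambda_1^{-1/2}$ and $\eta<3/2$. The hardest step, I expect, will be justifying the uniform growth $p_n,p'_n,p''_n\asymp\lambda_1^{\lfloor n/L\rfloor}$ in the presence of the negative entries in $A_{(1,0)}$ and $A_{(2,0)}$; Theorem \ref{th:primitive} is the crucial input here, guaranteeing that $\mathcal{P}er$ is primitive on the relevant invariant cone so that the standard Perron--Frobenius conclusions are available.
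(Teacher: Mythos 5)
Your proposal follows exactly the route the paper itself indicates: the paper gives no written-out proof of Proposition \ref{P1}, saying only that it ``can be shown by using Theorem \ref{th:primitive} in a way similar to Perron,'' and your argument is precisely that Perron--Frobenius eigenvalue analysis of the period matrix, with Theorem \ref{th:primitive} supplying primitivity (after conjugating by $S$, which turns the $A_{(i,j)}$ into the nonnegative matrices $M_{(i,j)}$ and disposes of your worry about the negative entries of $A_{(1,0)}$, $A_{(2,0)}$). The spectral decomposition, the growth rate $p_n\asymp\lambda_1^{\lfloor n/L\rfloor}$, the exponent $\eta=1-\log\max(|\lambda_2|,|\lambda_3|)/\log\lambda_1$, and the use of $|\lambda_1\lambda_2\lambda_3|=1$ are all correct and are what the authors intend.

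One step is not justified as written: in the totally real case you deduce $\max(|\lambda_2|,|\lambda_3|)>\lambda_1^{-1/2}$ from the mere distinctness of $\lambda_2,\lambda_3$. Distinctness does not preclude $\lambda_2=-\lambda_3$, which would give $|\lambda_2|=|\lambda_3|=\lambda_1^{-1/2}$ and hence $\eta=3/2$ even though $K$ is totally real. You must rule this out, and irreducibility does it: if a cubic $x^3+ax^2+bx+c$ had roots $r,-r,s$ then $c=-r^2s=bs=-ab$, so the polynomial factors as $(x+a)(x^2+b)$, contradicting the irreducibility you already established via Lemma \ref{lem:primitive}. With that two-line patch (and the cosmetic correction that the Perron eigenvector of $\mathcal{P}er$ is ${}^{t}(1,\alpha_{n_0},\beta_{n_0})$, which only becomes proportional to ${}^{t}(1,\alpha,\beta)$ after applying the preperiod matrix), your proof is complete and coincides with the paper's intended argument.
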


We can also give some examples of periodic  expansions 
in the similar manner as in \cite{TY}.

\begin{theorem}\label{P2}
Let $m\in {\Bbb Z}_{>0}$.
Let $\lambda$ be the real root of
$x^3-mx^2-1$. Let $K={\Bbb Q}(\lambda)$.
Then, 
\[
\left( \frac{1}{1+\lambda+\lambda^2},\frac{\lambda}{1+\lambda+\lambda^2}
\right)
\in {\mathcal{P}er}_K^{5/2}.
\]
\end{theorem}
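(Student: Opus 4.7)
The plan is to show that the orbit of $(\alpha,\beta)=(1/(1+\lambda+\lambda^2),\lambda/(1+\lambda+\lambda^2))$ under $T=T_{K,5/2}$ is purely periodic with period $3m$, by tracing it explicitly through three symmetric phases of $m$ iterations each. I will represent each state by a projective triple $(a,b,c)$ with $(\alpha_n,\beta_n,\gamma_n)=(a,b,c)/(a+b+c)$, so that the initial triple is $(1,\lambda,\lambda^2)$. In these coordinates, $T_{(0,2)}:(a,b,c)\mapsto(a,b,c-b)$, $T_{(2,1)}:(a,b,c)\mapsto(a,b-a,c)$, and $T_{(1,0)}:(a,b,c)\mapsto(a-c,b,c)$. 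Positive rescaling of $(a,b,c)$ preserves both $(\alpha,\beta)$ and the ratios among the coordinate values $a^{5/2}/|N(a)|$, $b^{5/2}/|N(b)|$, $c^{5/2}/|N(c)|$, so I may compute $\varepsilon$ from any projective representative. Throughout I will use $N(\lambda)=1$, $N(\lambda-k)=mk^2+1-k^3$ (from $-P(k)$ with $P(x)=x^3-mx^2-1$), and the identities $\lambda^2-m\lambda=1/\lambda$ and $\lambda-m=1/\lambda^2$; note that $\lambda\in(m,m+1)$ since $P(m)=-1<0<P(m+1)$.

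\textbf{Three phases.} An induction will show that after $n$ applications of $T_{(0,2)}$ (Phase A) the triple is $(1,\lambda,\lambda^2-n\lambda)$; after a further $k$ applications of $T_{(2,1)}$ (Phase B) the triple is $(1,\lambda-k,1/\lambda)$; and after a further $k$ applications of $T_{(1,0)}$ (Phase C) the triple is $(\lambda(\lambda-k),1,\lambda)$. Using the two identities above, the Phase-A endpoint becomes $(1,\lambda,1/\lambda)\propto(\lambda,\lambda^2,1)$, the Phase-B endpoint $(1,1/\lambda^2,1/\lambda)\propto(\lambda^2,1,\lambda)$, and the Phase-C endpoint $(1/\lambda,1,\lambda)\propto(1,\lambda,\lambda^2)$, which is the initial triple. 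At every step, the relevant triangle inclusion ($\triangle(0,2)$: $b\le c$; $\triangle(2,1)$: $a\le b$; $\triangle(1,0)$: $a\ge c$) reduces to the single condition $k\le\lambda-1$, equivalently $k\le m-1$; and the value-function choice reduces, after cancelling the common rescaling factor, to the single algebraic inequality
\[
(\lambda(\lambda-k))^{5/2}>mk^2+1-k^3, \qquad k=0,1,\ldots,m-1,
\]
together with trivial auxiliary inequalities such as $\lambda^{5/2}>1$. Together these yield $T^{3m}(\alpha,\beta)=(\alpha,\beta)$, hence $(\alpha,\beta)\in{\mathcal{P}er}_K^{5/2}$.

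\textbf{Main obstacle.} The only substantive check is the key inequality above. For $k=0$ it reduces to $\lambda^5>1$, which is immediate. For $k\ge 1$, I will use $\lambda(\lambda-k)=1/\lambda+(m-k)\lambda>m(m-k)\ge m$ (from $\lambda>m$ and $m-k\ge 1$) to bound the left-hand side below by $m^{5/2}$, while the right-hand side equals $k^2(m-k)+1\le(m-1)^2+1$. The elementary inequality $m^{5/2}>(m-1)^2+1$ for $m\ge 2$ then closes the argument.
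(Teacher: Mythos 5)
Your orbit computation is correct and is essentially the paper's proof in cleaner clothing: the projective triples $(1,\lambda,\lambda(\lambda-n))$, $(1,\lambda-k,1/\lambda)$, $(\lambda(\lambda-k),1,\lambda)$ are exactly the points the paper writes with denominator $(m-k+1)\lambda^2+\lambda+1$, the triangle conditions do reduce to $k\le\lambda-1$, and the value-function check does reduce to the single inequality $(\lambda(\lambda-k))^{5/2}>mk^2+1-k^3$ for $0\le k\le m-1$ (the paper's inequality $((m-k)\lambda+1/\lambda)^{5/2}>k^2m-k^3+1$ is the same one, since $\lambda(\lambda-k)=(m-k)\lambda+1/\lambda$).

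The gap is in your verification of that key inequality. The bound $k^2(m-k)+1\le(m-1)^2+1$ is false: for $m=5$, $k=3$ the left side is $19$ and the right side is $17$; in fact $\max_{0\le k\le m-1}k^2(m-k)$ is attained near $k=2m/3$ and is about $4m^3/27$, which grows like $m^3$, not $m^2$. Worse, your lower bound $m^{5/2}$ for the left-hand side is intrinsically too weak against the true maximum: since $4m^3/27>m^{5/2}$ once $m\ge 46$, the chain $m^{5/2}>k^2(m-k)+1$ genuinely fails --- e.g.\ for $m=48$, $k=32$ one has $k^2(m-k)+1=16385$ while $48^{5/2}\approx 15963$. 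The error is that you discard the factor $m-k$ when weakening $\lambda(\lambda-k)>m(m-k)\ge m$ to $m$. Keeping it repairs the step exactly as the paper does: $(\lambda(\lambda-k))^{5/2}>(m(m-k))^{5/2}\ge\left(m(m-k)\right)^2=m^2(m-k)^2$, and $m^2(m-k)^2\ge k^2(m-k)+1$ because $k^2\le(m-1)^2$ and $m-k\le(m-k)^2$ give $k^2(m-k)\le m^2(m-k)^2-(2m-1)(m-k)^2\le m^2(m-k)^2-1$. With that substitution your argument closes and coincides with the paper's.
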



\begin{proof}
For $n\in {\Bbb Z}_{\geq 0}$ let $(\alpha_n,\beta_n)=T_{K,5/2}^{n}
(\frac{1}{1+\lambda+\lambda^2},\frac{\lambda}{1+\lambda+\lambda^2})
$.
Then, we will prove  for $0\leq k \leq m-1$
\begin{align*}
&\alpha_k=\frac{\lambda}{(m-k+1)\lambda^2+\lambda+1},\ 
\beta_k=\frac{\lambda^2}{(m-k+1)\lambda^2+\lambda+1}\\
&\varepsilon \left(\alpha_k,\beta_k \right)=(0,2),\\
&\alpha_{m+k}=\frac{\lambda^2}{(m-k+1)\lambda^2+\lambda+1},\ 
\beta_{m+k}=\frac{(m-k)\lambda^2+1}{(m-k+1)\lambda^2+\lambda+1}\\ 
&\varepsilon \left(\alpha_{m+k},\beta_{m+k} \right)=(2,1),\\
&\alpha_{2m+k}=\frac{(m-k)\lambda^2+1}{(m-k+1)\lambda^2+\lambda+1},\ 
\beta_{2m+k}=\frac{\lambda}{(m-k+1)\lambda^2+\lambda+1}\\ 
&\varepsilon \left(\alpha_{2m+k},\beta_{2m+k} \right)=(1,0).\\
\end{align*}
In what follows, we suppose  that $k\in {\Bbb Z}$.
First, let $\zeta_k,\eta_k$ and $\xi_k$ be 
\begin{align*}
&\zeta_k=\frac{\lambda}{(m-k+1)\lambda^2+\lambda+1},\ 
\eta_k=\frac{\lambda^2}{(m-k+1)\lambda^2+\lambda+1},\\ 
&\xi_{k}=\frac{(m-k)\lambda^2+1}{(m-k+1)\lambda^2+\lambda+1}.\\
\end{align*}
Simple calculations show the following:
\begin{enumerate}
\item[(1)]
$\zeta_k+\eta_k+\xi_k=1$,
\item[(2)]
$\zeta_0=\alpha_0$ and $\eta_0=\beta_0$,
\item[(3)]
$\left(\zeta_k,\eta_k \right)\in \Delta_K$ holds, 
\item[(4)]
$T_{(0,2)} \left(\zeta_k,\eta_k \right)=
\left(\zeta_{k+1},\eta_{k+1} \right)$ holds,
\item[(5)]
$\left(\zeta_{m},\eta_{m} \right)=
\left(\eta_{0},\xi_{0} \right)$ holds.
\end{enumerate}
We prove that  $\varepsilon(\zeta_k,\eta_k)=(0,2)$ for 
$0 \leq k<m$. 
It is easy to see that
\begin{align*}
&N(\zeta_k)=\frac{1}{m^2+(k^2-3k)m-k^3+3k^2},\\
&N(\eta_k)=\frac{1}{m^2+(k^2-3k)m-k^3+3k^2},\\
&N(\xi_k)=\frac{k^2m-k^3+1}{m^2+(k^2-3k)m-k^3+3k^2}.
\end{align*}
Therefore, we have
\begin{align*}
\frac{\zeta_k^{5/2}}{N(\zeta_k)}
&=\frac{\lambda^{5/2}(m^2+(k^2-3k)m-k^3+3k^2)}{((m-k+1)\lambda^2+\lambda+1)^{5/2}},\\
\frac{\eta_k^{5/2}}{N(\eta_k)}
&=\frac{\lambda^5(m^2+(k^2-3k)m-k^3+3k^2)}{((m-k+1)\lambda^2+\lambda+1)^{5/2}},\\
\frac{\xi_k^{5/2}}{N(\xi_k)}
&=\frac{((m-k)\lambda^2+1)^{5/2}(m^2+(k^2-3k)m-k^3+3k^2)}{(k^2m-k^3+1)((m-k+1)\lambda^2+\lambda+1)^{5/2}}.
\end{align*}
Since $\lambda>1$, we have $\frac{\zeta_k^{5/2}}{N(\zeta_k)}<\frac{\eta_k^{5/2}}{N(\eta_k)}$. 
We easily see that $m^2(m-k)^2\geq k^2m-k^3+1$.
Since $m<\lambda<m+1$, we see that
\begin{align*}
\left( \left(m-k \right) \lambda+\frac{1}{\lambda} \right)^{5/2}
>m^2(m-k)^2 \geq k^2m-k^3+1,
\end{align*}
which implies 
\begin{align*}
\frac{\zeta_k^{5/2}}{N(\zeta_k)}<\frac{\xi_k^{5/2}}{N(\xi_k)}.
\end{align*}
Thus, we have 
$\varepsilon(\zeta_k,\eta_k)=(0,2)$.
We can easily prove that $(\alpha_k,\beta_k)=(\zeta_k,\eta_k)$ for $0\leq k \leq m-1$,
  $(\alpha_{m+k},\beta_{m+k})=(\eta_k,\xi_k)$
and $(\alpha_{2m+k},\beta_{2m+k})=(\xi_k,\zeta_k)$
 on induction of $k$.
\end{proof}

\section{Continued Fraction Expansion and Acceleration of Continued Fraction}
\label{sec:cont}

As we have already seen that for any given 
$(x,y) \in \Delta_{K}$ for any given real cubic field $K$, 
we can consider a sequence $\left\{ \varepsilon \left( \alpha_{n}, \beta_{n} \right) \right\}_{n=0}^{\infty}$ defined by
\[
\varepsilon \left( \alpha_{n}, \beta_{n} \right) := \varepsilon \left( T^{n} (\alpha,\beta) \right) \in Ind:=\left\{
\left( i, j \right) \left|~
i, j \in \left\{ 0, 1, 2 \right\}, i \neq j
\right.
\right\}
\]
obtained by the algorithm given in Section \ref{sec:algo}. 

In this section, we shall describe the continued fraction 
expansion of 
$\frac{1}{1-\alpha-\beta} (\alpha,\beta)$
according to the "expansion" 
$\left\{ \varepsilon \left( \alpha_{n}, \beta_{n} \right) \right\}_{n=0}^{\infty}$ of 
$(\alpha,\beta)$.  In what follows of this section, 
we use column vectors instead of row vectors.  
We denote by 
$\bar{\bar{x}}= ~^{t} (x^{(0)}, x^{(1)}, x^{(2)}) \in \mathbb{R}^{3}$
(resp., $\bar{x}=~^{t} (x^{(1)}, x^{(2)}) \in \mathbb{R}^{2}$)
an vector of dimension 3 (resp., of dimension 2), where $t$
indicates the transpose. 

We need some definitions. 
For $n\in {\Bbb Z}_{>0}$ and a set $S$
we denote by $M(n,S)$ $n\times n$ matrices with 
entries in $S$.
We put 
\begin{align}
\label{eq:C}
C \left( \bar{a} \right)  &:=
\begin{pmatrix}
{}^{t} \bar{0} & 1\\ 
E_{2} &  \bar{a}
\end{pmatrix}, 
\
 \bar{a} \in \mathbb{Z}^{2}_{\geq 0},\\
P_{n}  =  \left( \bar{\bar{p}}_{n-2}\ 
\bar{\bar{p}}_{n-1} \
\bar{\bar{p}}_{n}
\right)
 & :=  
 C \left( \bar{a}_{0} \right) 
 C \left( \bar{a}_{1} \right) 
 \cdots
 C \left( \bar{a}_{n} \right),
 \
  \bar{a}_{n} \in \mathbb{Z}^{2}_{\geq 0}, \nonumber
\end{align}
\[ 
\left( n \geq -1, \ P_{-1}:=E_{3}, \ 
 \bar{\bar{p}}_{n}  =
   {}^{t}\left( p_{n}^{\left( 0 \right)}, p_{n}^{\left( 1 \right)}, 
 p_{n}^{\left( 2 \right)}
 \right)
 \right),
\]
where $E_{m}$ is the unit matrix of size $m \times m$.

We write
\begin{align*}
\frac{1}{{x \choose y}} &:= {1/y \choose x/y}
\quad \left( x, y \in \mathbb{R}, \ y \neq 0 \right),\\
\left[ \bar{a}_{0}; \bar{a}_{1}, \ldots, \bar{a}_{n} \right]
&:=
\bar{a}_{0} + \dfrac{1}{
\bar{a}_{1}+\dfrac{1}{
\ddots\begin{array}{c}
~\\
+\dfrac{1}{\bar{a}_{n}}
\end{array}}},
\end{align*}
and 
\[
\left[
\bar{a}_{0}; \bar{a}_{1}, \bar{a}_{2}, \ldots
\right]:=
\lim_{n \rightarrow \infty} 
\left[
\bar{a}_{0}; \bar{a}_{1}, \bar{a}_{2}, \ldots, \bar{a}_n
\right]
\]
as far as the limit exists.

Using Lemma \ref{lem:AB} and 
\[
C \left( \bar{a} \right)^{\mbox{\it frac}}
{x \choose y} = \pi \kappa 
\begin{pmatrix}
y \\
1+ay\\
x + by
\end{pmatrix}
= {a \choose b} + \dfrac{1}{{x \choose y}},
\]
where $\bar{a}=
\begin{pmatrix}
a \\
b
\end{pmatrix}$,
we get the following formula, see for example, 
\cite{NS, T2}.

\begin{formula}

\begin{enumerate}
\item[(1)]
\(
\left[
\bar{a}_{0}; \bar{a}_{1},  \ldots, \bar{a}_{n}, \bar{x}
\right]=P_{n}^{\mbox{\it frac}} \left( \bar{x} \right);
\)
\item[(2)]
\(\displaystyle
\left[
\bar{a}_{0}; \bar{a}_{1}, \ldots, \bar{a}_{n}
\right]=\dfrac{1}{p_{n}^{\left( 0 \right)}}
{p_{n}^{\left( 1 \right)} \choose p_{n}^{\left( 2 \right)}}
\) 
holds provided $p_{n}^{\left( 0 \right)}
\neq 0.$
\end{enumerate}
\end{formula}

It is convenient to write two dimensional continued fraction
\(
\left[
\bar{a}_{0}; \bar{a}_{1},  \ldots, \bar{a}_{n}
\right]
\)
(resp., 
\(
\left[
\bar{a}_{0}; \bar{a}_{1}, \bar{a}_{2}, \ldots
\right]
\)) as a finite word
(resp., an infinite word) over 
$\mathbb{Z}^{2}_{\geq 0}$:
\[
\begin{array}{rcl}
\mbox{(CF) }  
\bar{a}_{0} \bar{a}_{1}  \ldots \bar{a}_{n}
& := & 
\left[
\bar{a}_{0}; \bar{a}_{1}, \ldots, \bar{a}_{n}
\right],\\
\mbox{(CF) }  
\bar{a}_{0} \bar{a}_{1}  \bar{a}_{2} \ldots
& := & 
\left[
\bar{a}_{0}; \bar{a}_{1}, \bar{a}_{2}, \ldots
\right].
\end{array}
\]

For $\bar{x}={}^{t} \left( x, y \right) \in \triangle$, we 
denote by $\bar{x}^{*} = {}^{t} \left( x^{*}, y^{*} \right)$
a vector defined by
\[
\bar{x}^{*} = {x^{*} \choose y^{*}} = \dfrac{1}{1-x-y} {x \choose y}.
\]
Notice that 
${}^{t}\left( x^{*}, y^{*} \right) \in \mathbb{R}^{2}_{>0}$
always holds for any $\bar{x}={}^{t}\left( x, y \right) \in \triangle$. 

Now we can state our theorem.

\begin{theorem}
\label{th:varepsilon}
Let $\left\{ \varepsilon_n \right\}_{n=0}^{\infty}$=$\left\{ \varepsilon \left( \alpha_{n}, \beta_{n} \right) \right\}^{\infty}_{n=0}$
be the expansion of $(\alpha,\beta)$
$\in \Delta_{K}$.  Then
\begin{equation}
\label{eq:CFW}
{\alpha^{*} \choose \beta^{*}}
= \mbox{ (CF) } W_{\varepsilon_{0}} W_{\varepsilon_{1}}
\ldots W_{\varepsilon_{n-1}} \ldots
\end{equation}
where
\begin{align*}
W_{ \left( 1,2 \right)} & = 
{0 \choose 0} {0 \choose 0} {0 \choose 1}, 
&W_{\left( 0,1 \right)} & = 
{0 \choose 0} {0 \choose 1} {0 \choose 0}, 
&W_{\left( 0,2 \right)} & = 
{0 \choose 0} {0 \choose 0} {1 \choose 0},\\
W_{\left( 2,1 \right)} & =  
{0 \choose 0} {1 \choose 0} {0 \choose 0},
&W_{\left( 1,0 \right)} & = 
{1 \choose 0} {0 \choose 0} {0 \choose 0}, 
&W_{\left( 2,0 \right)} & = 
{0 \choose 1} {0 \choose 0} {0 \choose 0}.
\end{align*}
\end{theorem}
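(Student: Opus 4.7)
The plan is to reduce Theorem \ref{th:varepsilon} to a matrix identity of the form $S \cdot Q_{W_{(i,j)}} = A_{(i,j)} \cdot S$, and then translate through Lemma \ref{lem:AB}, Theorem \ref{t0}, and Formula 3.4.

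\textbf{Step 1 (the key algebraic identity).} For each $(i,j)\in Ind$ let $Q_{W_{(i,j)}}$ denote the product $C(\bar a^{(1)}) C(\bar a^{(2)}) C(\bar a^{(3)})$ of the three $C$-matrices corresponding to the three vectors listed in $W_{(i,j)}$. I would verify by direct $3\times 3$ matrix multiplication that
\[
S \cdot Q_{W_{(i,j)}} \;=\; A_{(i,j)} \cdot S
\]
for each of the six labels. For instance, for $W_{(1,2)}=\binom{0}{0}\binom{0}{0}\binom{0}{1}$ one computes $Q_{W_{(1,2)}} = \bigl(\begin{smallmatrix}1 & 0 & 0\\ 0 & 1 & 1\\ 0 & 0 & 1\end{smallmatrix}\bigr)$ and both sides equal $\bigl(\begin{smallmatrix}1 & 1 & 2\\ 0 & 1 & 1\\ 0 & 0 & 1\end{smallmatrix}\bigr)$. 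The remaining five cases are analogous.

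\textbf{Step 2 (telescoping).} By induction on $n$, the concatenation $W_{\varepsilon_0}W_{\varepsilon_1}\cdots W_{\varepsilon_{n-1}}$ of length $3n$ produces, via Step 1,
\[
Q_n \;:=\; C(\bar a_0)C(\bar a_1)\cdots C(\bar a_{3n-1}) \;=\; S^{-1} A_{\varepsilon_0} A_{\varepsilon_1}\cdots A_{\varepsilon_{n-1}} S.
\]
Applying Lemma \ref{lem:AB} and the commutativity of $(\cdot)^{\text{frac}}$ under products, it remains to evaluate $Q_n^{\text{frac}}$ at an appropriate tail vector. Direct computation gives $S^{\text{frac}}(\alpha_n^{*},\beta_n^{*}) = (\alpha_n,\beta_n)$ and $(S^{-1})^{\text{frac}}(\alpha,\beta) = (\alpha^{*},\beta^{*})$. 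Then Theorem \ref{t0}, iterated $n$ times using $(\alpha_{k+1},\beta_{k+1}) = T_{\varepsilon_k}(\alpha_k,\beta_k)$, yields $(A_{\varepsilon_0})^{\text{frac}}\circ\cdots\circ(A_{\varepsilon_{n-1}})^{\text{frac}}(\alpha_n,\beta_n) = (\alpha,\beta)$. Chaining these three ingredients,
\[
Q_n^{\text{frac}}(\alpha_n^{*},\beta_n^{*}) \;=\; (\alpha^{*},\beta^{*}).
\]

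\textbf{Step 3 (apply Formula 3.4 and pass to the limit).} Formula 3.4(1) now reads
\[
\bigl[\,W_{\varepsilon_0} W_{\varepsilon_1}\cdots W_{\varepsilon_{n-1}}\,;\,(\alpha_n^{*},\beta_n^{*})\,\bigr] \;=\; (\alpha^{*},\beta^{*})
\]
for every $n\ge 0$, which is exactly the statement that the truncated CF with the vector tail $(\alpha_n^{*},\beta_n^{*})$ represents $(\alpha^{*},\beta^{*})$. Letting $n\to\infty$, the finite CFs converge to the infinite CF $\mathrm{(CF)}\,W_{\varepsilon_0}W_{\varepsilon_1}\cdots$, which gives \eqref{eq:CFW}.

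\textbf{Main obstacle.} The computational work of Step 1 is routine but requires six separate $3\times 3$ verifications. The more delicate issue lies in Step 3: since $(\alpha_n,\beta_n)\in\Delta$ may approach the edge $\alpha+\beta = 1$, the tail $(\alpha_n^{*},\beta_n^{*})$ need not remain bounded, so I cannot argue convergence by mere boundedness. Instead, the convergence must come from contraction of $Q_n^{\text{frac}}$, so that the error $Q_n^{\text{frac}}(\bar z) - Q_n^{\text{frac}}(\bar 0)$ shrinks as $n\to\infty$; in the periodic case this is guaranteed by Proposition \ref{P1}, and in general it follows from the standard divergence principle for multidimensional continued fractions governed by primitive nonnegative matrices.
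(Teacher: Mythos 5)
Your proposal is correct and follows essentially the same route as the paper: your Step 1 identity $S\,Q_{W_{(i,j)}}=A_{(i,j)}S$ is exactly the combination of the paper's Lemma \ref{lem:astar} ($A^{*}_{(i,j)}=S^{-1}A_{(i,j)}S=M_{(i,j)}$) and Lemma \ref{lem:RUV} (the factorization of $M_{(i,j)}$ into three $C$-matrices), and your Steps 2--3 reproduce the paper's conjugation-and-telescoping argument through the $(\cdot)^{\mbox{\it frac}}$ formalism. Your closing remark on convergence of the truncated continued fractions is in fact more careful than the paper's own proof, which passes over that point silently.
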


One can check the following lemmas by direct calculation.

\begin{lemma}
\label{lem:astar}
Let $A^{*}_{\left( i, j \right)} = S^{-1} A_{\left( i, j \right)}
S$ ($\left( i, j \right) \in Ind$), where $A_{\left( i, j \right)}$
and $S$ are matrices as in Section \ref{sec:algo}.  Then, 
\[
A^{*}_{\left( i, j \right)} = M_{\left( i, j \right)} 
\quad \left( \forall \left( i, j \right) \in Ind \right),
\] 
where $M_{\left( i, j \right)} = \left(
m_{k \ell} \right)_{0 \leq k \leq 2, 0 \leq \ell \leq 2} \in GL_{3} \left( \mathbb{Z} \right)$ defined by 
\[
m_{k \ell} := \left\{
\begin{array}{rcl}
1 & \quad k=\ell \mbox{ or } \left( k, \ell \right) = \left( i, j \right) \\
0 & \quad \mbox{ otherwise}
\end{array}.
\right.
\]
\end{lemma}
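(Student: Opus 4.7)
The plan is a direct verification, one case per element of $Ind$. First I would record $S^{-1}$ explicitly:
\[
S^{-1}=\begin{pmatrix} 1 & -1 & -1 \\ 0 & 1 & 0 \\ 0 & 0 & 1 \end{pmatrix},
\]
which follows because $S$ differs from the identity only by the first-row entries $(1,2)$ and $(1,3)$, both equal to $1$. Since $M_{(i,j)}$ is by definition the identity matrix plus a single $1$ in position $(i,j)$, the claim is a purely linear-algebraic identity with no number-theoretic content.

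Next I would verify the six cases by computing $S^{-1}A_{(i,j)}S$ directly from the formulas in (\ref{d2}). For the ``easy'' cases $(i,j)\in\{(1,2),(2,1),(0,1),(0,2)\}$, each $A_{(i,j)}$ is already the identity plus a single off-diagonal $1$, and conjugation by $S$ simply permutes that extra entry: for instance, $A_{(0,2)}$ has its extra $1$ in position $(0,2)$, so $S^{-1}A_{(0,2)}S$ keeps that same entry (because the $(0,2)$-column of $S$ is $^t(1,0,1)$, and the structure of $S^{-1}$ cancels the contribution through the first row). The remaining two cases $(1,0)$ and $(2,0)$ involve the genuinely non-trivial matrices with negative entries, and here I would carry out the two $3\times 3$ multiplications explicitly, checking that all off-diagonal entries except the one in position $(i,j)$ cancel.

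The main (really only) potential pitfall is bookkeeping: keeping our indexing convention $0,1,2$ consistent between the definitions of $A_{(i,j)}$, the conjugated matrices, and $M_{(i,j)}$. Once the six products are written out, each is a mechanical $3\times 3$ matrix multiplication and the result matches $M_{(i,j)}$ entry by entry.
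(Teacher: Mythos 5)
Your proposal is correct and is exactly the approach the paper takes: the paper offers no argument beyond ``one can check by direct calculation,'' and your plan of writing down $S^{-1}$ and verifying the six conjugations $S^{-1}A_{(i,j)}S = M_{(i,j)}$ entry by entry is precisely that calculation (I have checked that all six products do come out as claimed). One small slip in your heuristic for the ``easy'' cases: $A_{(1,2)}$ and $A_{(2,1)}$ are \emph{not} the identity plus a single off-diagonal $1$ (each has two extra $1$'s, e.g.\ $A_{(1,2)}$ at positions $(0,2)$ and $(1,2)$), so the shortcut only applies to $A_{(0,1)}$ and $A_{(0,2)}$; this does not affect your argument since you fall back on the explicit multiplication anyway.
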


\begin{lemma}
\label{lem:RUV}
Let $R$, $U$, $V$ be matrices defined by 
\[
R=C \left( \bar{0} \right), \quad
U=C \left( \bar{u} \right), \quad
V=C \left( \bar{v} \right),
\]
where 
$\bar{0} = {}^{t} \left( 0, 0 \right)$, 
$\bar{u} = {}^{t} \left( 1, 0 \right)$, 
$\bar{v} = {}^{t} \left( 0, 1 \right)$,
and $C \left( \bar{a} \right)$ ( $\bar{a} \in \mathbb{Z}_{\geq0}^{2}$)
is the matrix (\ref{eq:C}).  Then, 
\begin{align*}
M_{\left( 1,2 \right)} & =  RRV,  
&M_{\left( 0,1 \right)} & =  RVR,
&M_{\left( 0,2 \right)} & =  RRU,\\
M_{\left( 2,1 \right)} & =  RUR, 
&M_{\left( 1,0 \right)}  &=  URR,  
&M_{\left( 2,0 \right)} & =  VRR.
\end{align*}
\end{lemma}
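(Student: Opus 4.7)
The plan is to verify each of the six identities by a direct $3 \times 3$ matrix computation, organized in a structurally uniform way rather than as six independent grinds. First, I would write $R$, $U$, $V$ explicitly from the definition of $C(\bar a)$ in (\ref{eq:C}):
\[
R = \begin{pmatrix} 0 & 0 & 1 \\ 1 & 0 & 0 \\ 0 & 1 & 0 \end{pmatrix}, \qquad
U = R + E_{1,2}, \qquad V = R + E_{2,2},
\]
where $E_{i,j}$ denotes the matrix unit with a single $1$ in position $(i,j)$ (using the $0$-indexed convention of Lemma \ref{lem:astar}). The key structural observation is that $R$ is the cyclic permutation acting on standard basis columns by $e_0 \mapsto e_1 \mapsto e_2 \mapsto e_0$ and on basis rows by the inverse shift $e_0 \mapsto e_2 \mapsto e_1 \mapsto e_0$, so $R^3 = E_3$ and
\[
R^k\, E_{i,j}\, R^\ell \;=\; E_{\,i+k,\; j-\ell} \qquad (\text{indices read mod } 3).
\]

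Next, each of the six products on the right-hand side of the lemma has the shape $R^k\, X\, R^\ell$ with $X \in \{U, V\}$ and $k + \ell = 2$. Expanding $X = R + E_{i',j'}$ gives
\[
R^k\, X\, R^\ell \;=\; R^{k+\ell+1} + R^k\, E_{i',j'}\, R^\ell \;=\; E_3 + E_{\,i'+k,\; j'-\ell},
\]
and by Lemma \ref{lem:astar} this is exactly $M_{(i'+k,\,j'-\ell)}$. Reading off each case --- for instance $RRV = E_3 + E_{1,2} = M_{(1,2)}$, $RVR = E_3 + E_{0,1} = M_{(0,1)}$, $URR = E_3 + E_{1,0} = M_{(1,0)}$, and similarly for $RRU$, $RUR$, $VRR$ --- produces all six identities at once.

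I do not expect any substantive obstacle; the whole statement is an identity between explicit small matrices. The only thing to track carefully is the indexing convention, since Lemma \ref{lem:astar} indexes rows and columns from $0$ to $2$ whereas matrix entries are more commonly written $1$-indexed. Fixing the $0$-indexed convention at the outset makes the mod-$3$ arithmetic in the shift formula transparent, after which the lemma reduces to a six-line verification.
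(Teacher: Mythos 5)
Your proof is correct, and it is essentially the same approach as the paper's: the paper simply states that Lemma \ref{lem:RUV} ``can be checked by direct calculation,'' and your argument is exactly such a verification, merely organized through the clean observation that $R$ is a cyclic permutation matrix with $R^3=E_3$ and that conjugating a matrix unit by powers of $R$ shifts its indices mod $3$. All six identities check out under the $0$-indexed convention of Lemma \ref{lem:astar}, so nothing further is needed.
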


\begin{proof}[Proof of Theorem \ref{th:varepsilon}] \
One can see that $\varepsilon_{0} 
= 
\left( i, j \right) \in Ind$ implies
\[
A_{\left( i, j \right)}^{-1} 
\begin{pmatrix}
1 \\
\alpha \\
\beta
\end{pmatrix}
\sim
\begin{pmatrix}
1 \\
\alpha_{1} \\
\beta_{1}
\end{pmatrix} .
\]
Hence, by induction, we have
\[
\begin{pmatrix}
1 \\
\alpha \\
\beta
\end{pmatrix}
\sim
A_{\varepsilon_{0}} A_{\varepsilon_{1}} \cdots A_{\varepsilon_{n-1}} 
\begin{pmatrix}
1 \\
\alpha_{n} \\
\beta_{n}
\end{pmatrix}, \
(\varepsilon_{k}=\varepsilon \left( \alpha_{k}, \beta_{k} \right)), 
\]
so that 
\[
S^{-1}
\begin{pmatrix}
1 \\
\alpha \\
\beta
\end{pmatrix}
\sim
S^{-1} A_{\varepsilon_{0}} S S^{-1} 
A_{\varepsilon_{1}} S \cdots S^{-1} A_{\varepsilon_{n-1}} S 
S^{-1} 
\begin{pmatrix}
1 \\
\alpha_{0} \\
\beta_{0}
\end{pmatrix}
=
A_{\varepsilon_{0}}^{*} 
A_{\varepsilon_{1}}^{*} 
\cdots
A_{\varepsilon_{n-1}}^{*} 
S^{-1}
\begin{pmatrix}
1 \\
\alpha_{n} \\
\beta_{n}
\end{pmatrix} ,
\]
which implies
\[
{\alpha^{*} \choose \beta^{*}} = 
A_{\varepsilon_{0}}^{* \mbox{\it frac}}
A_{\varepsilon_{1}}^{* \mbox{\it frac}}
\cdots
A_{\varepsilon_{n-1}}^{* \mbox{\it frac}}
{\alpha_{n}^{*} \choose \beta_{n}^{*}},  
\]
\[
{\alpha^{*} \choose \beta^{*}} := \dfrac{1}{1-\alpha-\beta} {\alpha \choose \beta}, 
\quad
{\alpha_{n}^{*} \choose \beta_{n}^{*}} := \dfrac{1}{1-\alpha_{n}-\beta_{n}} {\alpha_{n} \choose \beta_{n}}
\]
which together with Lemma \ref{lem:astar} and
Lemma \ref{lem:RUV}, we get Theorem  \ref{th:varepsilon}.
\end{proof}

We can make reduction of the continued fraction of the form
(\ref{eq:CFW}) in Theorem \ref{th:varepsilon}
by applying the following reduction rule: 
\[
\mbox{ (CF) } \cdots {a \choose b}
{0 \choose 0}
{0 \choose 0}
{c \choose d}
\cdots = 
\mbox{ (CF) } 
\cdots 
{a+c \choose b+d}
\cdots, 
\]
in particular
\[
\mbox{ (CF) } \cdots {a \choose b}
{0 \choose 0}
{0 \choose 0}
{0 \choose 0}
{c \choose d}
\cdots = 
\mbox{ (CF) } 
\cdots 
{a \choose b}
{c \choose d}
\cdots.
\]

We give an example.
Let $\lambda$ be the real root of $x^{3} -mx^{2}
-1$, ($m \in \mathbb{Z}_{>0}$)
as in Theorem \ref{P2}.
Then, Theorem \ref{P2} says that
\begin{equation}
\label{eq:gzi}
\dfrac{1}{1-\xi-\eta} {\xi \choose \eta} = 
\mbox{(CF)} 
W_{\left(0, 2 \right)}^{m}
W_{\left(2,1 \right)}^{m}
W_{\left(1,0 \right)}^{m}
W_{\left(0,2 \right)}^{m}
W_{\left(2,1 \right)}^{m}
W_{\left(1,0 \right)}^{m}
\cdots,
\end{equation}
\[
\xi = \frac{1}{1+\lambda+\lambda^2}, \quad
\eta=\frac{\lambda}{1+\lambda+\lambda^2}.
\]
Hence, applying the reduction rule repeatedly, we get 
periodic continued fractions: 
\begin{eqnarray}
\label{eq:continued}
\dfrac{1}{1-\xi-\eta} {\xi \choose \eta} & = & 
\stackrel{~}{\mbox{(CF)}}
\stackrel{*}{{0 \choose 0}} 
\stackrel{~}{{0 \choose 0}} 
\stackrel{~}{{m \choose 0}} 
\stackrel{~}{{0 \choose 0}} 
\stackrel{~}{{m \choose 0}} 
\stackrel{~}{{0 \choose 0}} 
\stackrel{~}{{m \choose 0}} 
\stackrel{~}{{0 \choose 0}} 
\stackrel{*}{{0 \choose 0}} \nonumber\\
& = & \stackrel{~}{\mbox{(CF)}}
\stackrel{~}{{0 \choose 0}} 
\stackrel{*}{{0 \choose 0}} 
\stackrel{~}{{m \choose 0}} 
\stackrel{~}{{0 \choose 0}} 
\stackrel{~}{{m \choose 0}} 
\stackrel{~}{{0 \choose 0}} 
\stackrel{*}{{m \choose 0}} \nonumber\\
& = & \stackrel{~}{\mbox{(CF)}}
\stackrel{~}{{0 \choose 0}} 
\stackrel{*}{{0 \choose 0}} 
\stackrel{*}{{m \choose 0}},
\end{eqnarray}
which is an accelerated continued fraction of 
(\ref{eq:gzi}).  In other words, if we say 
(\ref{eq:continued}) is a canonical continued fraction,
then the expression 
(\ref{eq:CFW}) given in Theorem \ref{th:varepsilon} can be
considered as a (slow or additive) continued fraction expansion of a canonical continued fraction. 
For $n\in {\Bbb Z}_{>0}$ $M \in M(n,{\Bbb Z}_{\geq 0})$  
is called primitive, if there exists an positive integer $m$ such that 
$M^m\in M(n,{\Bbb Z}_{> 0})$.

\begin{lemma}
\label{lem:primitive}
Let $
\left\{
\varepsilon_n 
\right\}^{\infty}_{n=0}$$=\left\{ \varepsilon \left( \alpha_{n}, \beta_{n} \right) \right\}^{\infty}_{n=0}$
be the expansion of $(\alpha,\beta) \in \Delta_{K}$. 
We suppose that $\left\{  
\varepsilon_n \right\}^{\infty}_{n=0}$
is purely periodic and the length of the period is $l$.
Then, every eigenvalue of the matrix $M_{\varepsilon_{0}}\cdots M_{\varepsilon_{l-1}}$ 
is in $K\setminus{\Bbb Q}$.
\end{lemma}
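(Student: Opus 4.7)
The plan is to use pure periodicity to exhibit a specific eigenvector of $M := M_{\varepsilon_0}\cdots M_{\varepsilon_{l-1}}$ whose coordinates form a $\mathbb{Q}$-basis of $K$, read off the associated eigenvalue $\lambda$, show $\lambda \in K\setminus\mathbb{Q}$, and then apply Galois conjugation to handle the remaining two eigenvalues.

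I would start by chaining the projective relation $A_{\varepsilon_n}^{-1}(1,\alpha_n,\beta_n)^{t}\sim(1,\alpha_{n+1},\beta_{n+1})^{t}$ (implicit in the proof of Theorem~\ref{th:varepsilon}) over one full period, then conjugating by $S^{-1}$ and using Lemma~\ref{lem:astar} to replace $S^{-1}A_{(i,j)}S$ by $M_{(i,j)}$, obtaining
\[
S^{-1}\begin{pmatrix}1\\\alpha\\\beta\end{pmatrix}\;\sim\;M\,S^{-1}\begin{pmatrix}1\\\alpha_l\\\beta_l\end{pmatrix}.
\]
Pure periodicity $(\alpha_l,\beta_l)=(\alpha,\beta)$ collapses this into $Mv=\lambda v$ for some real $\lambda$, where $v := S^{-1}(1,\alpha,\beta)^{t} = (1-\alpha-\beta,\alpha,\beta)^{t}$. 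Since $v$ has strictly positive components and $M$ has integer entries, reading $\lambda=(Mv)_i/v_i$ for any $i$ shows $\lambda\in K$.

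The crux is ruling out $\lambda\in\mathbb{Q}$. Assume it is. Because $\{1-\alpha-\beta,\alpha,\beta\}$ is a $\mathbb{Q}$-basis of $K$ (inherited from the independence of $1,\alpha,\beta$), equating coefficients in this basis on each row of $Mv=\lambda v$ forces $M=\lambda I_3$. Taking determinants, $\lambda^3=\det M=1$, since each factor $M_{(i,j)}=I_3+E_{ij}$ has determinant $1$; hence $\lambda=1$ and $M=I_3$. To derive a contradiction, rewrite this as $M_{\varepsilon_0}\cdots M_{\varepsilon_{l-2}} = M_{\varepsilon_{l-1}}^{-1} = I_3-E_{i_{l-1}j_{l-1}}$ (using $E_{ij}^{2}=0$ for $i\neq j$). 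The left-hand side is a product of non-negative integer matrices and hence has non-negative entries, while the right-hand side has the entry $-1$ at position $(i_{l-1},j_{l-1})$. Therefore $\lambda\in K\setminus\mathbb{Q}$.

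Since $[K:\mathbb{Q}]=3$ is prime, $\lambda$ has degree $3$ over $\mathbb{Q}$, so its minimal polynomial is an irreducible cubic dividing the degree-$3$ characteristic polynomial $\chi_M$; the two coincide up to sign. Consequently $\chi_M$ has no rational root. Applying the non-trivial embeddings $\sigma_1,\sigma_2$ of $K$ to $Mv=\lambda v$ (noting $\sigma_i(M)=M$) yields eigenvectors $\sigma_i(v)$ with eigenvalues $\sigma_i(\lambda)$, and the three vectors $v,\sigma_1(v),\sigma_2(v)$ are linearly independent over $\overline{\mathbb{Q}}$ because the matrix of their coordinates is the conjugate matrix of the basis $\{1-\alpha-\beta,\alpha,\beta\}$, whose determinant squared is the non-zero discriminant of this basis. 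Hence the three eigenvalues of $M$ are exactly the three distinct Galois conjugates of $\lambda$, each lying in $K\setminus\mathbb{Q}$ under the appropriate embedding. The main obstacle I expect is the non-cancellation step: showing that products of the non-negative matrices $I_3+E_{ij}$ cannot collapse to $I_3$ is where the combinatorial rigidity of the algorithm is used, whereas everything else is either bookkeeping from the projective setup or a standard Galois argument.
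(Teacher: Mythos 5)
Your proof is correct and follows essentially the same route as the paper: exhibit ${}^{t}(1-\alpha-\beta,\alpha,\beta)$ as an eigenvector of $M=M_{\varepsilon_0}\cdots M_{\varepsilon_{l-1}}$, use the $\mathbb{Q}$-linear independence of its coordinates to force $M=\lambda E$ when $\lambda$ is rational, and contradict this via the structure of the factors $M_{(i,j)}=I+E_{ij}$. The only differences are cosmetic: the paper gets the contradiction directly from the $(i',j')$-entry of $M$ being at least $1$ (so $M$ cannot be diagonal), whereas you pass through $\det M=1$ and a negative entry of $M_{\varepsilon_{l-1}}^{-1}$, and you make explicit the Galois-conjugation step for the remaining two eigenvalues that the paper leaves implicit.
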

\begin{proof}
We suppose that an eigenvalue $\lambda$ of 
the matrix $M_{\varepsilon_{0}}\cdots M_{\varepsilon_{l-1}}$ denoted by $C=(c_{i,j})$
associated with the eigenvector ${}^t(1-\alpha-\beta,\alpha,\beta)$
is a rational number.
Then, we have  
\begin{align}\label{Cmat}
C
\begin{pmatrix}
1-\alpha-\beta\\
\alpha\\
\beta
\end{pmatrix}
=
\lambda
\begin{pmatrix}
1-\alpha-\beta\\
\alpha\\
\beta
\end{pmatrix}.
\end{align}
Since $1-\alpha-\beta,\alpha,\beta $ are linearly independent over ${\Bbb Q}$,
$C=\lambda E$, where $E$ is the unit matrix.
Let $(i',j')=\varepsilon_{0}$. Then, $C=M_{\varepsilon_{0}}\cdots M_{\varepsilon_{l-1}}$ 
implies that  $c_{i',j'}\geq 1$, which leads to the contradiction that
$C=\lambda E$.
\end{proof}

\begin{remark}
The irreducibility of the matrix $M_{\varepsilon_{0}}\cdots M_{\varepsilon_{l-1}}$
 follows from Lemma \ref{lem:primitive} as far as we are concentrated with
cubic field $K$.
\end{remark}

\begin{theorem}
\label{th:primitive}
Let $
\left\{
\varepsilon_n 
\right\}^{\infty}_{n=0}=\left\{ \varepsilon \left( \alpha_{n}, \beta_{n} \right) \right\}^{\infty}_{n=0}$
be the expansion of $(\alpha,\beta) \in \Delta_{K}$. 
We suppose that $\left\{
\varepsilon_n \right\}^{\infty}_{n=0}$
is purely periodic and the length of the period is $l$.
Then, the matrix $M_{\varepsilon_{0}}\cdots M_{\varepsilon_{l-1}}$ 
is primitive.
\end{theorem}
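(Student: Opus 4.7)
The plan is to deduce primitivity of $C := M_{\varepsilon_0}\cdots M_{\varepsilon_{l-1}}$ by combining the algebraic information supplied by Lemma~\ref{lem:primitive} (irreducibility of the characteristic polynomial) with an elementary combinatorial observation (positive diagonal), concluding via a standard Perron--Frobenius argument.

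First I would record two elementary facts about $C$. By Lemma~\ref{lem:astar}, each $M_{(i,j)}$ has entries in $\{0,1\}$, with $1$'s along the entire diagonal plus exactly one extra $1$ at position $(i,j)$. Hence products of such matrices remain in $M(3,\mathbb{Z}_{\ge 0})$, and since $(AB)_{ii}\ge A_{ii}B_{ii}$, every diagonal entry of a product stays strictly positive. Consequently $C\in M(3,\mathbb{Z}_{\ge 0})$ with $C_{ii}>0$ for $i=0,1,2$.

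Next I would show the characteristic polynomial $\chi_C(x)\in\mathbb{Z}[x]$ is irreducible over $\mathbb{Q}$. By Lemma~\ref{lem:primitive}, every eigenvalue of $C$ lies in $K\setminus\mathbb{Q}$, and since $[K:\mathbb{Q}]=3$, each such eigenvalue has $\mathbb{Q}$-minimal polynomial of degree exactly $3$. This minimal polynomial divides the degree-$3$ monic $\chi_C$, so they must coincide; hence $\chi_C$ is $\mathbb{Q}$-irreducible (and, by Gauss's lemma, $\mathbb{Z}$-irreducible).

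The key step is then to convert this into irreducibility of $C$ as a nonnegative matrix, i.e., to rule out any permutation matrix $P$ that puts $C$ in nontrivial block upper-triangular form. If to the contrary
\[
PCP^{-1}=\begin{pmatrix}A & B\\ 0 & D\end{pmatrix}
\]
with $A,D$ square of positive size, then $\chi_C=\chi_A\cdot\chi_D$ would factor nontrivially in $\mathbb{Z}[x]$, contradicting the previous paragraph. Therefore the directed graph associated to $C$ is strongly connected.

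Finally, an irreducible nonnegative matrix with at least one strictly positive diagonal entry is primitive: the self-loop forces the gcd of cycle lengths in the associated digraph to be $1$, so the irreducible matrix is aperiodic, and standard Perron--Frobenius theory then yields $m\ge 1$ with $C^m\in M(3,\mathbb{Z}_{>0})$. Applied to $C$ (which by Step~1 has \emph{every} diagonal entry positive) this finishes the proof. I expect the main subtlety to be the block-decomposition step, which is the only place where the algebraic input from Lemma~\ref{lem:primitive} is actually used; the remaining steps are routine bookkeeping and classical Perron--Frobenius.
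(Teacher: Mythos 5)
Your proof is correct, but it takes a genuinely different route from the paper's. You package the argument as: positive diagonal (immediate from the explicit form of the $M_{(i,j)}$ in Lemma~\ref{lem:astar}), plus $\mathbb{Q}$-irreducibility of the characteristic polynomial (deduced from Lemma~\ref{lem:primitive}), which rules out any block-triangularization and hence gives Perron--Frobenius irreducibility; the self-loops then force period $1$, so $C=M_{\varepsilon_0}\cdots M_{\varepsilon_{l-1}}$ is primitive. The paper instead argues directly on the partial products $C_n=M_{\varepsilon_0}\cdots M_{\varepsilon_n}$: their entries are non-decreasing in $n$, and if some off-diagonal entry $(i_0,j_0)$ stayed zero forever, a case analysis (using $\det C_n=1$, the combinatorial structure of the $M_{(i,j)}$, and the pure periodicity to propagate the constraint $\varepsilon_n\notin\{(0,2),(1,2)\}$ to all $n$) forces a standard basis vector to be fixed by $C$ or by ${}^{t}C$, so that $1$ would be an eigenvalue, contradicting Lemma~\ref{lem:primitive}; since $C_{ml-1}=C^{m}$ by pure periodicity, all entries eventually becoming positive is exactly primitivity. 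Your version is shorter and leans on classical Perron--Frobenius theory rather than on the admissibility combinatorics, and it uses pure periodicity only through Lemma~\ref{lem:primitive}. One small caveat: you quote Lemma~\ref{lem:primitive} as asserting that \emph{every} eigenvalue lies in $K\setminus\mathbb{Q}$; for non-Galois $K$ the conjugate eigenvalues need not lie in $K$, but your deduction of irreducibility of $\chi_C$ only needs a single eigenvalue that is irrational of degree $3$ over $\mathbb{Q}$, which is exactly what the lemma's proof establishes for the eigenvalue attached to ${}^{t}(1-\alpha-\beta,\alpha,\beta)$, so nothing breaks.
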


\begin{proof}
For each $n\in {\Bbb Z_{\geq 0}}$
we put $C_n$ by  
\begin{align*}
C_n=(\,_nc_{i,j})=
M_{\varepsilon_{0}}\cdots 
M_{\varepsilon_{n}}.
\end{align*}
We see easily that $\,_nc_{i,j}\leq \,_{n+1}c_{i,j}$
 for each $n\in {\Bbb Z_{\geq 0}}$ 
and $i,j$ with $0\leq i,j \leq 2$.
We suppose that there exists $i_0,j_0$ with $0\leq i_0,j_0 \leq 2$ such that 
$\lim_{n\to \infty}\,_nc_{i_0,j_0}=0$, which is equivalent to that
for every $n\in {\Bbb Z}_{\geq 0}\  \,_nc_{i_0,j_0}=0$. 
Since we see $\,_0c_{i,i}=1$ for every $i$ with $0\leq i \leq 2$, 
we have $i_0\ne j_0$. 
For simplicity, we consider the case where $(i_0,j_0)=(0,2)$.
 First, we suppose that 
   $\lim_{n\to \infty}\,_nc_{0,1}=0$, which is equivalent to that
for every $n\in {\Bbb Z}_{\geq 0}\  \,_nc_{0,1}=0$.
Since $|C_n|$=1 for every $n\in {\Bbb Z}_{\geq 0}$, it follows that 
$\,_nc_{0,0}=1$ for every $n\in {\Bbb Z}_{\geq 0}$.
Then, we have 
\begin{align*}
{}^{t}C_{l-1}
\begin{pmatrix}
1 \\
0 \\
0
\end{pmatrix}
=
\begin{pmatrix}
1 \\
0 \\
0
\end{pmatrix},
\end{align*}
which implies that $1$ is an eigenvalue of $M_{\varepsilon_{0}}\cdots M_{\varepsilon_{l-1}}$, which contradicts Lemma \ref{lem:primitive}.
Next, we suppose that 
   $\lim_{n\to \infty}\,_nc_{0,1}>0$, which is equivalent to that
there exists  $n_0\in {\Bbb Z}_{\geq 0}$
such that $\,_{n_0}c_{0,1}>0$. 
We see that if $n>n_0$, then  
$\varepsilon_n $
 $\notin \{(0,2),(1,2)\}$.
Therefore, by the pure periodicity of 
$\left\{ 
\varepsilon_n
\right\}_{\geq 0}$ 
we get  $
\varepsilon_n \notin \{(0,2),(1,2)\}$ for all $n\in {\Bbb Z}_{\geq 0}$.
Hence, we can write 
\begin{align*}
C_{l-1}=
\left(
\begin{array}{ccc}
*&*&0 \\
*&*&0 \\
*&*&1
\end{array}
\right),
\end{align*}
so that 
\begin{align*}
C_{l-1}
\begin{pmatrix}
0 \\
0 \\
1
\end{pmatrix}
=
\begin{pmatrix}
0 \\
0 \\
1
\end{pmatrix},
\end{align*}
which contradicts Lemma \ref{lem:primitive}.
Thus, we have proved the theorem for the case where 
$(i_0,j_0)=(0,2)$.
We can do the same for the other cases.
\end{proof}

\section{Numerical experiments}
\label{sec:num}
We put 
\begin{align*}
\mathrm{dh}
\left(
\frac{p}{q} 
\right):=\max\{\lfloor \log_{10}|p|+1 \rfloor,\lfloor \log_{10}|q|+1 \rfloor \}, \ \ \ \mathrm{dh}(0):=0
\end{align*}
for $\frac{p}{q}$ ($p,q\in {\Bbb Z}$ are coprime).
The function $\mathrm{dh}$ can be extended to ${\Bbb Q}[x]$:
\begin{align*}
\mathrm{dh}(g):=\max_{0\leq i \leq n}\{\mathrm{dh}(a_i)\},
\end{align*}
for $\displaystyle{g(x)=\sum_{i=0}^n a_ix^i\in {\Bbb Q}[x]}$.
We define $\overline{\mathrm{dh}}$
, $\mathrm{dh_{F}}$
 and $\mathrm{rdh_{F}}$
by 
\begin{align*}
&\overline{\mathrm{dh}}({\bf \alpha}):=\max_{i\in\{1,2\}} 
\{\mathrm{dh}(\phi_{\alpha_i}) \},\\
&\mathrm{dh_{F}}(n;{\bf \alpha}):=\overline{\mathrm{dh}}(T^{n}_K({\bf \alpha})),\\
&\mathrm{rdh_{F}}(n;{\bf \alpha}):=\frac{\overline{\mathrm{dh}}(T^{n}_K({\bf \alpha}))}
{\overline{\mathrm{dh}}({\bf \alpha})},\\
&\mathrm{dh_{F}}({\bf \alpha}):=\max_{n\in {\Bbb Z}_{\geq 0}}\{\mathrm{dh_{F}}(n;{\bf \alpha})\},\\
&\mathrm{rdh_{F}}({\bf \alpha}):=\max_{n\in {\Bbb Z}_{\geq 0}}\{\mathrm{rdh_{F}}(n;{\bf \alpha})\},
\end{align*}
for ${\bf \alpha}=(\alpha_1,\alpha_{2})\in \Delta_K$ and 
$n\in {\Bbb Z}_{\geq 0}$, 
where $\phi_{\alpha_i}\in {\Bbb Q}[x]\  (i\in \{1,2\})$
is the monic minimal polynomial of $\alpha_i$. 
The function $\mathrm{dh_{F}}(n;{\bf \alpha})$
(resp., $\mathrm{rdh_{F}}(n;{\bf \alpha})$)
is referred to as {\it the $n$th decimal height of ${\bf \alpha}$}
(resp., {\it the $n$th relative decimal height of ${\bf \alpha}$})
.
We computed the length of the periods of 
$\left( \langle \sqrt[3]{m}\rangle/2,
\langle\sqrt[3]{m^2}\rangle/2 \right)$ for $T_{K,r}$ with $K={\Bbb Q}(\sqrt[3]{m}), r=5/2$  
for all $m\in {\Bbb Z}$ with 
$2\leq m\leq 10000$ $\left(\sqrt[3]{m}\notin {\Bbb Q}\right)$ 
and these decimal heights, cf. Table \ref{table:C} given below, 
where $\langle x\rangle$ is the fractional part of $x$.
For the calculation of the tables, we used
 a computer  equipped with GiNaC \cite{g} on GNU C++\footnote{The routine that was written for this purpose can be downloaded from the web site 
http://www.lab2.toho-u.ac.jp/sci/c/math/yasutomi/mfarey.html}.
We confirmed that $\left(\langle \sqrt[3]{m}\rangle/2,
\langle \sqrt[3]{m^2}\rangle/2 \right)\in \Delta_{K, 5/2}^{\mathcal{P}er}$ for all $m$ with $2\leq m\leq 10000 \left(
\sqrt[3]{m}\notin {\Bbb Q} \right)$.

\begin{center}
\begin{longtable}{c|r|r|r}
\caption{The result of periodicity test for 
$\left( \langle \sqrt[3]{m}\rangle/2,
\langle\sqrt[3]{m^2}\rangle/2 \right)$ for 
all noncubic positive integers $2 \leq m \leq 10000$ with 
$r=5/2$.}
\label{table:C}\\
\hline
\multicolumn{1}{c}{\text{Range of $m$ ($m_1\leq m\leq m_2$)}} &
\multicolumn{1}{c}{\text{$L_A(m_1,m_2)$}} 
& 
\multicolumn{1}{c}{\text{$H_A(m_1,m_2)$}} 
& \multicolumn{1}{c}{\text{$R_A(m_1,m_2)$}} 
\\
 \hline 
\endfirsthead

\multicolumn{4}{c}%
{{\tablename\ \thetable{} -- continued from previous page}} \\

\hline 
\multicolumn{1}{c}{\text{Range of $m$ 
($m_1\leq m\leq m_2$)}} &
\multicolumn{1}{c}{\text{$L_A(m_1,m_2)$}} &
\multicolumn{1}{c}{\text{$H_A(m_1,m_2)$}} 
& \multicolumn{1}{c}{\text{$R_A(m_1,m_2)$}} 
\\ \hline 
\endhead

\multicolumn{4}{r}{
\small\sl Continued on next page}\\
\hline
\endfoot

\hline
\endlastfoot

$2\leq m \leq 200$ &4494 &7&3\\
\hline
$201 \leq m \leq 400$&13641&8&7/3
\\
\hline
$401 \leq m \leq 600$ &13578&8&2
\\
\hline
$601 \leq m \leq 800$ &30447&8&2
\\
\hline
$801 \leq m \leq 1000$ &36963&8&2
\\
\hline
$1001\leq m \leq 1200$ &31119&9&9/4\\
\hline
$1201 \leq m \leq 1400$ &68529&9&9/5\\
\hline
$1401 \leq m \leq 1600$ &65310&9&9/5\\
\hline
$1601 \leq m \leq 1800$ &62598&9&9/5\\
\hline
$1801 \leq m \leq 2000$ &52551&9&9/5\\
\hline
$2001\leq m \leq 2200$ &74931&10&2\\
\hline
$2201 \leq m \leq 2400$ &177570&9&9/5\\
\hline
$2401 \leq m \leq 2600$ &97446&9&9/5\\
\hline
$2601 \leq m \leq 2800$ &79923&9&9/5\\
\hline
$2801 \leq m \leq 3000$ &121134&10&9/5\\
\hline
$3001\leq m \leq 3200$&107577&9&9/5\\
\hline
$3201 \leq m \leq 3400$&107919&10&2\\
\hline
$3401 \leq m \leq 3600$&95388&10&9/5\\
\hline
$3601 \leq m \leq 3800$&150393&10&9/5\\
\hline
$3801 \leq m \leq 4000$&133650&10&9/5\\
\hline

$4001 \leq m \leq 4200$&137787&10&2\\
\hline

$4201 \leq m \leq 4400$& 242391&10&2\\
\hline

$4401 \leq m \leq 4600$& 322374 &10& 2\\
\hline

$4601 \leq m \leq 4800$&180246 &10 & 2\\
\hline

$4801 \leq m \leq 5000$&124335&10&2\\
\hline

$5001\leq m \leq 5200$ &282870&10&2\\
\hline
$5201 \leq m \leq 5400$ &169845&10&2\\
\hline
$5401 \leq m \leq 5600$ &134589&10&2\\
\hline
$5601 \leq m \leq 5800$ &236004&10&2\\
\hline
$5801 \leq m \leq 6000$ &298266&10&2\\
\hline
$6001\leq m \leq 6200$ &439470&10&2\\
\hline
$6201 \leq m \leq 6400$ &249141&10&2\\
\hline
$6401 \leq m \leq 6600$ &188673&10&2\\
\hline
$6601 \leq m \leq 6800$ &176733&10&2\\
\hline
$6801 \leq m \leq 7000$ &462093&11&2\\
\hline
$7001\leq m \leq 7200$ &160650&10&5/3\\
\hline
$7201 \leq m \leq 7400$ &619809&10&5/3\\
%
%
\hline
$7401 \leq m \leq 7600$ &241893&10&5/3\\
\hline
$7601 \leq m \leq 7800$ &254790&10&5/3\\
\hline
$7801 \leq m \leq 8000$ &232170&10&5/3\\
\hline
$8001\leq m \leq 8200$ &398433&11&11/6\\
\hline
$8201 \leq m \leq 8400$&211460&11&11/6
\\
\hline
$8401 \leq m \leq 8600$ &264786&10&5/3
\\
\hline
$8601 \leq m \leq 8800$ &293934&11&11/6
\\
\hline
$8801 \leq m \leq 9000$ &785715&10&5/3
\\
\hline
$9001\leq m \leq 9200$ &265377&11&11/6\\
\hline
$9201 \leq m \leq 9400$ &377157&11&11/6\\
\hline
$9401 \leq m \leq 9600$ &258939&10&5/3\\
\hline
$9601 \leq m \leq 9800$ &269877&10&5/3\\
\hline
$9801 \leq m \leq 10000$ &276768&10&5/3

\end{longtable}
\end{center}

%

In Table \ref{table:C}, $L_A(m_1,m_2)$, $H_A(m_1,m_2)$ and $R_A(m_1,m_2)$ are numbers
 defined by
\begin{align*}
&L_A(m_1,m_2):=\text{the maximum value of the length of the shortest
period of}\\
&\text{ the expansion of\ }
(\langle \sqrt[3]{m} \rangle/2,\langle \sqrt[3]{m^2}\rangle/2)\ 
\text{for\ }
m_1\leq m \leq m_2\ \text{with}\ \sqrt[3]{m}\notin {\Bbb Q},\\ 
&H_A(m_1,m_2):=\max_{m_1\leq m \leq m_2,\sqrt[3]{m}\notin {\Bbb Q}}\mathrm{dh_{F}}(\langle \sqrt[3]{m} \rangle/2,\langle \sqrt[3]{m^2}\rangle/2),
\\
&R_A(m_1,m_2):=\max_{m_1\leq m \leq m_2, \sqrt[3]{m}\notin {\Bbb Q},}
\mathrm{rdh_{F}}(\langle \sqrt[3]{m} \rangle/2,\langle \sqrt[3]{m^2}\rangle/2),\end{align*}
which are well-defined by the periodicity.
This Table \ref{table:C} together with following numerical experiments by PCs for some totally real cubic fields
etc. support Conjecture \ref{con1} given at the end of 
our paper, which says that $\Delta_K= \Delta_{K, r}^{\mathcal{P}er}$ holds, cf. (\ref{al:per}).
On the other hand, the $explosion\  phenomenon$ takes place  
if we apply classical algorithms (the Jacobi-Perron algorithm, the modified  
Jacobi-Perron algorithm etc.), cf. \cite{TY, TY3, TY4}.

Let $K$ be a real cubic field 
 and let $\alpha^{(0)}, \alpha^{(1)}, \alpha^{(2)}$
be its positive ${\Bbb Q}$-basis with
\[
\alpha=\frac{\alpha^{(1)}}{\alpha^{(0)}+\alpha^{(1)}+\alpha^{(2)}},\
\beta=\frac{\alpha^{(2)}}{\alpha^{(0)}+\alpha^{(1)}+\alpha^{(2)}}. 
\]
Let  $\left \{ 
\varepsilon_n \right\}_{n=0}^{\infty}$
be the expansion of the $(\alpha,\beta)$.
Suppose that $\varepsilon_{k},\ldots, \varepsilon_{k+l-1}$ 
is the period of the expansion.
Then
$(1-\alpha_{k}-\beta_{k},\alpha_{k},\beta_{k})$
becomes an eigenvector with respect to an eigenvlaue of 
$M_{\varepsilon_{k}} \cdots 
M_{\varepsilon_{k+l-1}}$ 
which will be denoted by $\lambda \left( \alpha, \beta 
\right)$.
The eigenvalue is important for the Diophantine approximation to 
$(\alpha,\beta)$. 
We denote by $N_{t}$ a set 
\[
\left\{
\left. 
\left(\frac{1}{1+\alpha+\alpha^2},\frac{\alpha}{1+\alpha+\alpha^2}\right) \right|~
\begin{array}{l}
\alpha \mbox{ is the positive maximal root of }\\
\mbox{some irreducible } p\in P_{t} 
\end{array}
\right\},
\]
\[
P_{t}:=\{x^3+a_2x^2+a_1x+a_0
\left| a_i\in {\Bbb Z},\ |a_i|\leq t\ \text{for\ }i=0,1,2
\right.
\} \ \left( t >0 \right).
\]
We put $n_t$, $p_t$, $c_t$, $r_t$, $s_t$, $rh_t$ as follows:
\begin{eqnarray*}
n_t &= &\sharp N_{t},\\
p_t & = & \sharp \left\{ \left(\alpha,\beta \right)\in N_{t} \left|~
\left( \alpha,\beta \right)  \mbox{ is periodic by our algorithm}
\right.
\right\},\\
c_t & = & \sharp \left\{
\left(
\alpha,\beta
\right)
\in N_{t}
\left|
~{\Bbb Q}
\left(
\alpha,\beta
\right)
\mbox{ has a complex embedding}
\right.
\right\}\\
r_t & = & \sharp
\left\{
\left(
\alpha,\beta
\right)
\in N_{t}
\left|~
{\Bbb Q}
\left(
\alpha,\beta
\right)
\mbox{ is a 
totally real cubic field}
\right.
\right\},\\  
s_t  & = & \sharp 
\left\{
\left(
\alpha,\beta
\right)
\in N_{t}
\left|~ 
\lambda
\left(
\alpha,\beta
\right)
\mbox{ is the Pisot number}
\right.
\right\}\\
rh_t & = & 
\max \left\{
{\rm rdh}_{F}
\left(
\alpha,\beta
\right)
\left|~
\left(
\alpha,\beta
\right)
\in N_{t} 
\right.
\right\}.
\end{eqnarray*}
Then, we get $n_{15}=18797$, $p_{15}=18797$, $c_{15}=7689$, $r_{15}=11108$, $s_{15}=18797$ and
$rh_{15}=7/3$, i.e., every  $(\alpha,\beta)\in N_{15}$ is periodic by this algorithm, and 
 $\lambda(\alpha,\beta)$
 becomes Pisot number for 
 all the element $(\alpha,\beta)\in N_{15}$
without any exceptions (cf. Conjecture \ref{con2} given in Section \ref{sec:con}
).
For this calculation, we used
 a computer  equipped with GiNaC \cite{g} on GNU C++
 \footnote{The routine that was written for this purpose can be downloaded from the web site 
http://www.lab2.toho-u.ac.jp/sci/c/math/yasutomi/mfarey.html}.
We note that the maximal eigenvalue of 
$\prod_{\lambda\in \Lambda, M_\lambda\in BM}M_\lambda$ ($\Lambda < \infty$) is not 
always a Pisot number, where
 $BM:=\{M_{(i,j)} \left|~0\leq i,j\leq 2, i\ne j \right.\}$.
For example, $M_{(1,0)}M_{(0,1)}M_{(2,0)}M_{(0,2)}$ has a non-Pisot maximal eigenvalue.

\section{Stepped surfaces and Substitutions}
\label{sec:stepped}

We shortly prepare the geometric tools.  
Let us denote 
by $\bar{\bar e}_{i}$ $\left( i=0,1,2 \right)$
the canonical basis 
 of ${\mathbb R}^{3}$, i.e., 
\[
\bar{\bar e}_{0} :=~^{t} \left( 1, 0, 0 \right),\
\bar{\bar e}_{1} :=~^{t} \left( 0, 1, 0 \right),\
\bar{\bar e}_{2} :=~^{t} \left( 0, 0, 1 \right).
\]
For $\bar{\bar x} \in \mathbb{Z}^{3}$, $i=0,1,2$,
we mean by 
$\left( \bar{\bar x}, i^{*} \right)$ 
a unit square defined by 
\[
\left( \bar{\bar x}, i^{*} \right) := 
\left\{
\bar{\bar x} + t \bar{ \bar e}_{j} + u \bar{\bar e}_{k} 
~\left|~ t, u \in \left[ 0, 1 \right], \ 
\left\{ i, j , k \right\} = \left\{ 0,1,2 \right\} \right.
\right\}
\]
(see Figure \ref{fig:chips}).

\begin{figure*}[hbtp]
\begin{center}
\begin{minipage}{3cm}
\begin{center}
\includegraphics[width=3cm]{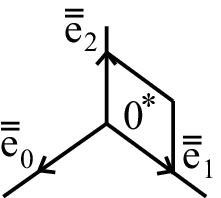}\\
$\left( \bar{\bar 0}, 0^{*} \right)$
\end{center}
\end{minipage}
\quad
\begin{minipage}{3cm}
\begin{center}
\includegraphics[width=3cm]{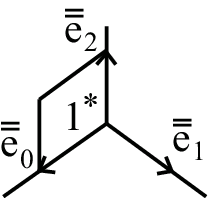}\\
$\left( \bar{\bar 0}, 1^{*} \right)$
\end{center}
\end{minipage}
\quad
\begin{minipage}{3cm}
\begin{center}
\includegraphics[width=3cm]{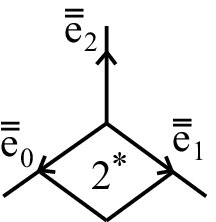}\\
$\left( \bar{\bar 0}, 2^{*} \right)$
\end{center}
\end{minipage}
\quad
\begin{minipage}{3cm}
\begin{center}
\includegraphics[width=3cm]{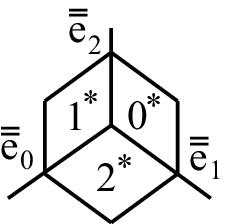}\\
$\sum_{i=0}^{2} \left( \bar{\bar 0}, i^{*} \right)$
\end{center}
\end{minipage}
\end{center}
\caption{$\left( \bar{\bar 0},i^{*} \right)$ and $\sum_{i=0}^{2} \left( \bar{\bar 0}, i^{*} \right)$.}
 \label{fig:chips}
\end{figure*}

Let $\bar{\bar \alpha} = {}^{t} \left( 
\alpha^{\left( 0 \right)}, 
\alpha^{\left( 1 \right)}, 
\alpha^{\left( 2 \right)}
\right) \in \mathbb{R}^{3}_{>0}$ and
$\alpha^{\left( 0 \right)}$, 
$\alpha^{\left( 1 \right)}$,
$\alpha^{\left( 2 \right)}$
be linearly independent over $\mathbb{Q}$.  
Notice that without loss of generality, we may assume 
$\alpha^{\left( 0 \right)}+ \alpha^{\left( 1 \right)}+
\alpha^{\left( 2 \right)}=1$
.
We consider the sets
\(
 {\mathcal P} \left( \bar{ \bar \alpha } \right),
\)
\(
 {\mathcal P}^{>} \left( \bar{ \bar \alpha } \right),
\)
and
\(
 {\mathcal P}^{\geq} \left( \bar{ \bar \alpha } \right):
\)
\begin{eqnarray*}
 {\mathcal P} \left( \bar{ \bar \alpha } \right)
 &:=&
\left\{
\bar{ \bar x} \in \mathbb{R}^{3} \left|~ \left< 
\bar{\bar x}, \bar{\bar \alpha} \right>=0 \right.
\right\},\\
 {\mathcal P}^{>} \left( \bar{ \bar \alpha } \right)
 &:=&
\left\{
\bar{ \bar x} \in \mathbb{R}^{3} \left|~ \left< 
\bar{\bar x}, \bar{\bar \alpha} \right>>0 \right.
\right\},\\
 {\mathcal P}^{\geq} \left( \bar{ \bar \alpha } \right)
 &:=&
\left\{
\bar{ \bar x} \in \mathbb{R}^{3} \left|~ \left< 
\bar{\bar x}, \bar{\bar \alpha} \right>\geq 0 \right.
\right\}.
\end{eqnarray*}
where $\left< \cdot, \cdot \right>$ means the inner product.
We put 
%
%
\begin{eqnarray*}
{\mathscr S} \left( \bar{\bar \alpha} \right)
&:=&
\left\{
\left( \bar{\bar x}, i^{*} \right) \left|~ i=0,1,2, \
\left< \bar{\bar x}, \bar{\bar \alpha} \right> > 0, \
\left<\bar{\bar x}-\bar{\bar e}_{i} ,  \bar{\bar \alpha}  \right> \leq 0 
\right.
\right\}, \\
 {\mathscr S}' \left( \bar{\bar \alpha} \right)
&:=&
\left\{
\left( \bar{\bar x}, i^{*} \right) \left|~ i=0,1,2, \
\left< \bar{\bar x}, \bar{\bar \alpha} \right> \geq 0, \
\left<\bar{\bar x}-\bar{\bar e}_{i} ,  \bar{\bar \alpha}  \right> < 0 
\right.
\right\},
\end{eqnarray*}
which are  subsets
of 
${\mathbb Z}^{3} \times
\left\{ 0^{*}, 1^{*}, 2^{*} \right\}$.
By the definition of 
\(
\mathscr{S} \left( \bar{\bar \alpha} \right)
\)
(and
\(
\mathscr{S}' \left( \bar{\bar \alpha} \right)
\)),
we see that
\(
\mathscr{S} \left( \bar{\bar \alpha} \right)
\)
(and
\(
\mathscr{S}' \left( \bar{\bar \alpha} \right)
\))
consists of
the nearest
unit squares clinging to the plane
\(
 {\mathcal P}\left( \bar{ \bar \alpha } \right).
\)
%
\(
\mathscr{S} \left( \bar{\bar \alpha} \right)
\)
(and
\(
\mathscr{S}' \left( \bar{\bar \alpha} \right)
\))
will be referred to as the 
 {\it stepped surface} 
 with respect to the direction 
 $\bar{\bar \alpha}$.  We also says that 
\(
\mathscr{S} \left( \bar{\bar \alpha} \right)
\)
(and
\(
\mathscr{S}' \left( \bar{\bar \alpha} \right)
\))
is the 
stepped surface of the plane 
$ {\mathcal P} \left( \bar{\bar \alpha} \right)$. 
\begin{remark}
The difference between 
${\mathscr S} \left( \bar{\bar \alpha} \right)$
and ${\mathscr S}' \left(  \bar{\bar \alpha} \right)$
is 
that
\(
\mathscr{S} \left( \bar{\bar \alpha} \right)
\backslash
\mathscr{S}' \left( \bar{\bar \alpha} \right)
=
\left\{
\left( \bar{\bar e}_i, i^{*} \right)
\right\}_{i=0,1,2},
\)
\(
\mathscr{S}' \left( \bar{\bar \alpha} \right)
\backslash
\mathscr{S} \left( \bar{\bar \alpha} \right)
=
\left\{
\left( \bar{\bar 0}, i^{*}  \right)
\right\}_{i=0,1,2}.
\)
\end{remark}

Moreover, we define 
${\mathcal  S} \left( \bar{\bar \alpha} \right)
$: 
\[
{\mathcal  S} \left( \bar{\bar \alpha} \right) := \left\{ \Lambda
\left|~
\# \Lambda < + \infty, \ \Lambda \subset {\mathscr  S}  
\left( \bar{\bar \alpha} \right)\right.
\right\}.
\]
We denote by ${\mathcal  G}  \left( \bar{\bar \alpha} \right)$ the $\mathbb{Z}$-free module 
generated by all the finite squares:
\begin{equation}
\label{eq:G}
{\mathcal  G} \left( \bar{\bar \alpha} \right) := 
\left\{ \left. \sum_{\left( 
\bar{\bar x}, i^{*} 
\right): m_{\left( 
\bar{\bar x}, i^{*} 
\right)}\neq 0
}
\hspace{-5mm} m_{\left( \bar{\bar x}, i^{*} \right)}
\left( \bar{\bar x}, i^{*} \right) 
\right|~
\begin{array}{l}
\bar{\bar x} \in \mathbb{Z}^{3}, \
i \in \left\{ 0,1,2 \right\}, \\
m_{\left( \bar{\bar x},i^{*} \right)} \in \mathbb{Z}, \
\left( \bar{\bar x}, i^{*} \right) 
\in {\mathscr S} \left( \bar{\bar \alpha} \right),\\
\# 
\left\{
 \left( \bar{\bar x}, i^{*} \right) 
\left|~ m_{\left( \bar{\bar x}, i^{*} \right)} \neq 0 \right.
\right\} < +\infty 
\end{array}
\right\}.
\end{equation}

Remark that the sum on the right-hand side of 
(\ref{eq:G})
is a formal sum.  Hence, 
we have 
\(
{\mathcal  G}\left( \bar{\bar \alpha} \right)  = \left\{ \sum_{\lambda \in \Lambda}
 m_{\lambda} \lambda
 \left|~
 \Lambda \subset {\mathscr  S} \left( \bar{\bar \alpha} \right) , \
 \#  \Lambda < + \infty, \ m_{\lambda} \in 
 \mathbb{Z}
\right.
\right\}.
\)
In what follows, we only consider the case $m_{\lambda}=1$.
When $m_{\lambda}=1$, we call the element 
$\sum_{\lambda \in \Lambda} \lambda $ of 
${\mathcal  G} \left( \bar{\bar \alpha} \right) $ a {\it patch} of 
${\mathscr S}\left( \bar{\bar \alpha} \right) $.
In some cases, 
\(
\left\{\lambda \right\}_{\lambda \in \Lambda}
\)
of 
${\mathcal  S} \left( \bar{\bar \alpha} \right)$
is also called as a {\it patch}, but 
an 
element of 
${\mathcal  S} \left( \bar{\bar \alpha} \right)$
and 
${\mathcal  G} \left( \bar{\bar \alpha} \right)$
should be distinguished. 
It will be convenient to
define two maps, 
${}_s \Psi_g: {\mathcal  S} 
\left( \bar{\bar \alpha} \right)
\rightarrow {\mathcal  G}\left( \bar{\bar \alpha} \right)$
and
${}_g \Psi_s: {\mathcal  G} \left( \bar{\bar \alpha} \right)
\rightarrow {\mathcal  S} \left( \bar{\bar \alpha} \right)$ 
as follows: for $\Lambda \in {\mathcal S} \left( \bar{\bar \alpha} \right)$
\begin{eqnarray*}
 {}_s \Psi_g \left(\left\{
\lambda
\right\}_{{\lambda \in \Lambda} }\right)
&:=&
\sum_{\lambda \in \Lambda}  
\lambda \in {\mathcal  G} \left( \bar{\bar \alpha} \right)
\quad
\mbox{ (for $\left\{ \lambda  \right\}_{\lambda \in \Lambda} \in 
{\mathcal S} \left( \bar{\bar \alpha} \right)$),}\\
 {}_g \Psi_s \left(
\sum_{\lambda \in \Lambda}  
\lambda 
\right)
&:=&
\left\{
\lambda
\right\}_{{\lambda \in \Lambda} }
\in 
{\mathcal S} \left( \bar{\bar \alpha} \right)
\quad
\mbox{ (for $\sum_{\lambda \in \Lambda}  
\lambda 
 \in 
{\mathcal G} \left( \bar{\bar \alpha} \right)$).}
\end{eqnarray*}
For example, for $\sum_{i=0}^{2} \left( \hako{e}_{i},
i^{*} \right)$
and
$\left\{ \left( \hako{e}_{i},
i^{*} \right) \right\}_{i=0,1,2}$,
$ {}_g \Psi_s \left( \sum_{i=0}^{2} \left( \hako{e}_{i},
i^{*} \right)\right) =$ \\
$\left\{ \left( \hako{e}_{i},
i^{*} \right) \right\}_{i=0,1,2}$,
$ {}_s \Psi_g
\left(
\left\{ \left( \hako{e}_{i}, i^{*} \right)  \right\}_{i=0,1,2}
\right)
=\sum_{i=0}^{2} \left( \hako{e}_{i},
i^{*} \right)$.
For $\gamma$, $\delta \in {\mathcal G} 
\left( \bar{\bar \alpha} \right)$,
 we denote $\gamma \prec \delta$ if 
 $ {}_g \Psi_s \left( \gamma \right) \subset 
{}_g \Psi_s \left( \delta \right)$.
%
%
%
%
%
%
%
%
%
%
%
Taking 
${\mathscr S}' \left( \bar{\bar \alpha} \right)$ 
instead of  
${\mathscr S} \left( \bar{\bar \alpha} \right)$,
we can  define 
${\mathcal S}' \left( \bar{\bar \alpha} \right)$ 
and
${\mathcal G}' \left( \bar{\bar \alpha} \right)$ 
similarly.
For each $\left( i, j \right) \in Ind$, 
we consider
the substitution
$\sigma_{\left( i, j \right)}$ as follows:
\[
\sigma_{\left( i, j \right)}:\left\{
\begin{array}{rcl}
  i & \mapsto & ji \\
  k & \mapsto & k  \quad \left(  k \neq i 
\right)
\end{array}
\right..
\]
And 
the so-called 
incidence matrix
$L_{\left( i, j \right)}$
of $\sigma_{\left( i, j \right)}$
is the square matrix of size $3 \times 3$ defined by
$L_{\left( i, j \right)}= \left( l_{i'j'} \right)$ where
$l_{i'j'}$ is the number of occurrences of a 
letter $i'$ appearing  
in $\sigma_{\left( i, j \right)} \left( j' \right)$.
%
Notice that
$L_{\left( i, j \right)} =M_{\left( j, i \right)}$
for each 
$\left( i, j  \right) \in Ind$
where $M_{ \left(j,i \right)}$ 
is the matrix given in Lemma \ref{lem:astar}.
For
\( 
\left( 
\alpha, \beta 
\right)
\in \Delta_{K}
\), 
we put
\(
\bar{\bar \nu} \left( \alpha, \beta \right) :=
{}^{t}
\left( 1-\alpha-\beta, \alpha, \beta\right).
\)
In the sequel we fix an arbitrary  
\(
\left( 
\alpha, \beta 
\right)
\in \Delta_{K}
\)
and an arbitrary
$n\in {\mathbb Z}_{\geq 0}$.
Then, 
the dual substitution $\Theta_{\varepsilon_n}$ of $\sigma_{\varepsilon_n}$, which is an endomorphism
from 
${\mathcal  G} \left( \bar{\bar \nu} \left( \alpha_{n+1}, \beta_{n+1} 
\right) \right)$ 
to
${\mathcal  G} \left( \bar{\bar \nu} \left( \alpha_{n}, \beta_{n} 
\right) \right)$ 
introduced in \cite{AI}, can be defined by
\begin{align}
\label{eq:Thetadef}
&\Theta_{\varepsilon_n} \left( \bar{\bar x}, i^{*} \right)  := 
L_{\varepsilon_n}^{-1} 
\bar{\bar x}  + 
\sum_{j=0}^{2} \sum_{\scriptsize \begin{array}{c}
S:\\
\sigma_{\varepsilon_n} \left( j \right) = 
P i  S
\end{array}} \left( L_{\varepsilon_n}^{-1} \left( f \left( S \right) \right),j^{*} \right), \\
& \hspace{2cm} \Theta_{\varepsilon_n} \left( \sum_{\lambda \in \Lambda} \left( \bar{\bar x}, i^{*} \right)_{\lambda} \right) 
 :=  \sum_{\lambda \in \Lambda} \left( \Theta_{\varepsilon_n} \left( \bar{\bar x}, i^{*} \right)_{\lambda} \right)\nonumber
\end{align}
for $i=0,1,2$,
where 
$f \left( w \right) :=
\)
\( {}^{t} \left( 
\left| w \right|_{0}, 
\left| w \right|_{1}, 
\left| w \right|_{2}
\right)$
($\left| w \right|_{i}$ is the number of 
occurrences of a symbol $i$ appearing in a finite word $w \in \left\{ 0,1, 2 \right\}^{*}$), and $P$ (resp., $S$) 
means that the prefix
(resp., suffix) of $i$ of $\sigma \left( j \right)$
with $\sigma \left( j \right) = 
P  i  S$ and 
\[
\bar{\bar y}+
\sum_{\lambda \in \Lambda} \left( \bar{\bar x}_\lambda, i^{*}_\lambda \right)
:=\sum_{\lambda \in \Lambda} \left(\bar{\bar y}+\bar{\bar x}_\lambda, i^{*}_\lambda \right).
\]
In particular, 
\begin{equation}
\label{eq:dualSubst}
\Theta_{\left( i,j \right)}:
\left\{
\begin{array}{rcl}
\left( \bar{\bar x}, j^{*} \right) & \mapsto & 
\left( 
L_{\left(i,j \right)}^{-1}
\left( \bar{\bar x} + \bar{\bar e}_{i} \right), i^{*} 
 \right)
+
\left( L_{\left( i,j \right)}^{-1}
\bar{\bar x}, j^{*} \right)
,\\
\left( \bar{\bar x}, k^{*} \right) & \mapsto & 
\left( L_{ \left( i,j \right)}^{-1}
\bar{\bar x}, k^{*} \right)
\quad \left( k \neq j \right).
\end{array}
\right.
\end{equation}
(see Figure \ref{fig:Theta}).
\begin{figure*}[hbtp]
\begin{center}
\begin{minipage}{1cm}
\begin{center}
\includegraphics[width=1cm]{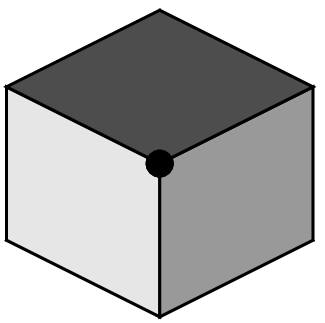}
\end{center}
\end{minipage}
\begin{minipage}{1cm}
\begin{center}
$\stackrel{\Theta_{\left( 1,2 \right)}}{\longrightarrow}$
\end{center}
\end{minipage}
\begin{minipage}{1.5cm}
\begin{center}
\includegraphics[width=1cm]{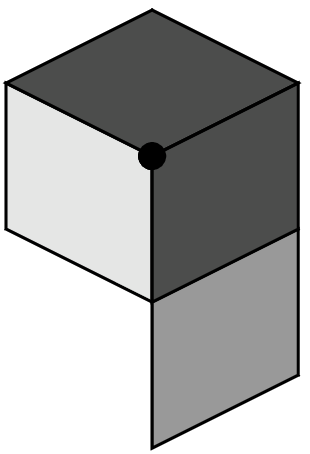}
\end{center}
\end{minipage}
 \quad \quad
\begin{minipage}{1cm}
\begin{center}
\includegraphics[width=1cm]{figure4-l.eps}
\end{center}
\end{minipage}
\begin{minipage}{1cm}
\begin{center}
$\stackrel{\Theta_{\left( 2,1 \right)}}{\longrightarrow}$
\end{center}
\end{minipage}
\begin{minipage}{1.5cm}
\begin{center}
\includegraphics[width=1.5cm]{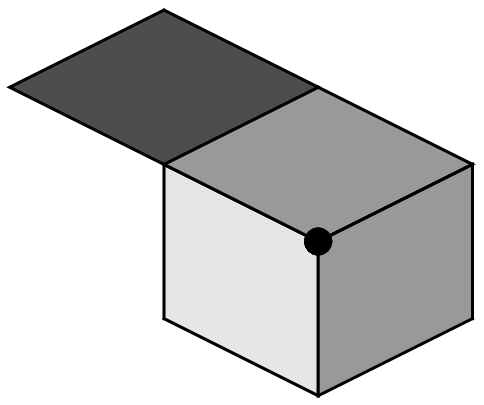}
\end{center}
\end{minipage}\\

\begin{minipage}{1cm}
\begin{center}
\includegraphics[width=1cm]{figure4-l.eps}
\end{center}
\end{minipage}
\begin{minipage}{1cm}
\begin{center}
$\stackrel{\Theta_{\left( 0,1 \right)}}{\longrightarrow}$
\end{center}
\end{minipage}
\begin{minipage}{1.5cm}
\begin{center}
\includegraphics[width=1.5cm]{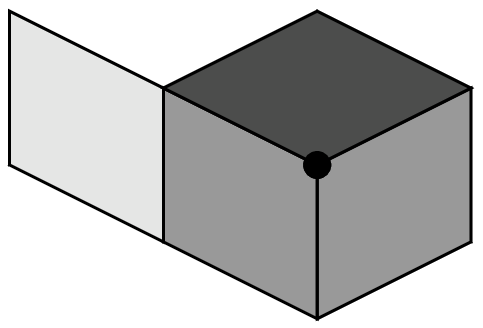}
\end{center}
\end{minipage}
\quad \quad
\begin{minipage}{1cm}
\begin{center}
\includegraphics[width=1cm]{figure4-l.eps}
\end{center}
\end{minipage}
\begin{minipage}{1cm}
\begin{center}
$\stackrel{\Theta_{\left( 1,0 \right)}}{\longrightarrow}$
\end{center}
\end{minipage}
\begin{minipage}{1.5cm}
\begin{center}
\includegraphics[width=1.5cm]{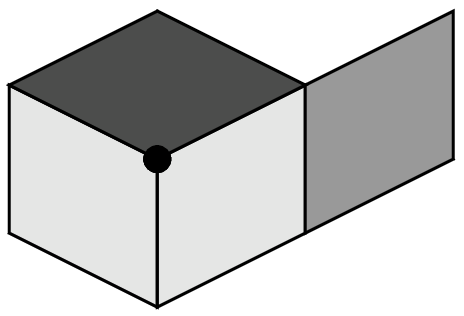}
\end{center}
\end{minipage}\\

\begin{minipage}{1cm}
\begin{center}
\includegraphics[width=1cm]{figure4-l.eps}
\end{center}
\end{minipage}
\begin{minipage}{1cm}
\begin{center}
$\stackrel{\Theta_{\left( 0,2 \right)}}{\longrightarrow}$
\end{center}
\end{minipage}
\begin{minipage}{1.5cm}
\begin{center}
\includegraphics[width=1cm]{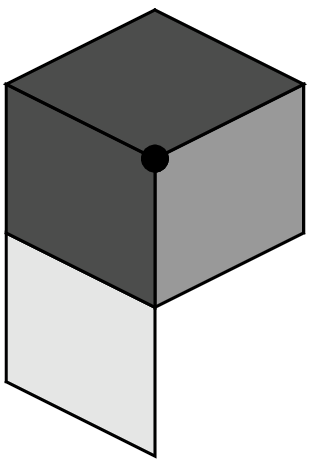}
\end{center}
\end{minipage}
\quad \quad
\begin{minipage}{1cm}
\begin{center}
\includegraphics[width=1cm]{figure4-l.eps}
\end{center}
\end{minipage}
\begin{minipage}{1cm}
\begin{center}
$\stackrel{\Theta_{\left( 2,0 \right)}}{\longrightarrow}$
\end{center}
\end{minipage}
\begin{minipage}{1.5cm}
\begin{center}
\includegraphics[width=1.5cm]{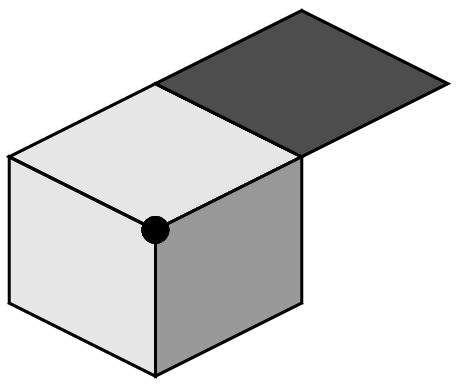}
\end{center}
\end{minipage}\\
\end{center}
\caption{
$\Theta_{\varepsilon} \left( {\mathcal U} \right)$,
${\mathcal U}=\sum_{i=0}^{2} \left( \hako{e}_{i}, 
i^{*} \right)$,
 $\varepsilon \in Ind$.}
\label{fig:Theta}
\end{figure*}

In view of definitions, we have
\begin{lemma}
\label{lem:u}
There exists a constant $c_{n}$ satisfying
\[
\bar{\bar \nu} \left( \alpha_{n+1}, \beta_{n+1} \right)= c_{n} {}^{t}  L_{\varepsilon_n}^{-1}
\bar{\bar \nu} \left( \alpha_{n}, \beta_{n} \right)
\]
where 
\[
c_{n} := 
\left\{
\begin{array}{lcl}
\frac{1}{1-\beta_{n}} & \mbox{ if } &
\varepsilon_n
\in 
\left\{ 
\left( 1,2 \right), 
\left( 0,2 \right)
\right\},
\medskip\\
\frac{1}{1-\alpha_{n}} & \mbox{ if } &
\varepsilon_n
\in 
\left\{
\left( 2,1 \right),
\left( 0,1 \right)
\right\},
\medskip\\
\frac{1}{\alpha_{n}+\beta_{n}} & \mbox{ if } &
\varepsilon_n
\in 
\left\{
\left( 1,0 \right),
\left( 2,0 \right)
\right\}.
\end{array}
\right.
\]
\end{lemma}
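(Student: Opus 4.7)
My plan is to translate the claim into a projective identity involving $\iota$ rather than $\bar{\bar\nu}$, verify that identity using Theorem \ref{t0}, and then transport back. The bridge is the observation that
\[
S\,\bar{\bar\nu}(\alpha,\beta) \;=\; {}^t(1,\alpha,\beta) \;=\; \iota(\alpha,\beta),
\]
together with $M_{\varepsilon_n}=S^{-1}A_{\varepsilon_n}S$ from Lemma \ref{lem:astar}. Granting the transposition identity ${}^tL_{\varepsilon_n}=M_{\varepsilon_n}$, which I treat at the end, the claim
\[
\bar{\bar\nu}(\alpha_{n+1},\beta_{n+1}) \;=\; c_n\,{}^tL_{\varepsilon_n}^{-1}\,\bar{\bar\nu}(\alpha_n,\beta_n)
\]
is equivalent, after left-multiplying by $S$, to the projective statement
\[
A_{\varepsilon_n}\,\iota(\alpha_{n+1},\beta_{n+1}) \;=\; c_n\,\iota(\alpha_n,\beta_n).
\]

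The projective equivalence of the two sides here is exactly the content of Theorem \ref{t0}: since $A_{\varepsilon_n}^{\text{frac}}\circ T_{\varepsilon_n}$ is the identity on $\triangle(\varepsilon_n)$, the vector $A_{\varepsilon_n}\iota(\alpha_{n+1},\beta_{n+1})$ is a positive scalar multiple of $\iota(\alpha_n,\beta_n)$, and $c_n$ is pinned down by reading off the first coordinate. For instance, for $\varepsilon_n=(1,2)$ one has $\beta_{n+1}=\beta_n/(1-\beta_n)$, so the first coordinate of $A_{(1,2)}\,\iota(\alpha_{n+1},\beta_{n+1})$ equals $1+\beta_{n+1}=1/(1-\beta_n)$, matching the claim. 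The remaining five cases are identical one-line verifications, and the three denominators $1-\alpha_n$, $1-\beta_n$, $\alpha_n+\beta_n$ appearing in the statement are precisely the denominators that appear in the definitions of $T_{(i,j)}$.

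It remains to establish ${}^tL_{(i,j)}=M_{(i,j)}$ for every $(i,j)\in Ind$. The substitution $\sigma_{(i,j)}:i\mapsto ji$, $k\mapsto k\;(k\neq i)$, has incidence matrix $L_{(i,j)}=E_3+E_{j,i}$, where $E_{j,i}$ denotes the $3\times 3$ matrix with entry $1$ in row $j$, column $i$, and zeros elsewhere. Inspection of the explicit forms of $M_{(i,j)}$ given in Lemma \ref{lem:astar} (or assembled from Lemma \ref{lem:RUV}) yields $M_{(i,j)}=E_3+E_{i,j}$; transposition swaps the off-diagonal entry from $(i,j)$ to $(j,i)$, and the identity follows. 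The only obstacle is bookkeeping — six short computations of the first coordinate for $c_n$ and the six parallel structural observations on $L$ and $M$ — with no conceptual difficulty.
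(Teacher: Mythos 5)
Your proof is correct, but it is organized differently from the paper's, so a comparison is worthwhile. The paper proves Lemma \ref{lem:u} by direct computation in the $\bar{\bar \nu}$-coordinates: for $\varepsilon_n=(1,2)$ it writes out all three components of $\bar{\bar \nu}\left(T_{(1,2)}(\alpha_n,\beta_n)\right)$, factors out $\frac{1}{1-\beta_n}$, and recognizes the remaining linear map as ${}^{t}L_{(1,2)}^{-1}$ applied to $\bar{\bar \nu}(\alpha_n,\beta_n)$, declaring the other five cases analogous. You instead conjugate by $S$ — using $S\bar{\bar \nu}(\alpha,\beta)=\iota(\alpha,\beta)$, the identity $M_{\varepsilon_n}=S^{-1}A_{\varepsilon_n}S$ from Lemma \ref{lem:astar}, and the transposition identity ${}^{t}L_{(i,j)}=M_{(i,j)}$, which you correctly reduce to $L_{(i,j)}=E_3+E_{j,i}$ versus $M_{(i,j)}=E_3+E_{i,j}$ — so that the claim becomes $A_{\varepsilon_n}\iota(\alpha_{n+1},\beta_{n+1})=c_n\,\iota(\alpha_n,\beta_n)$. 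Theorem \ref{t0} then gives proportionality for free, and only the first coordinate, i.e.\ the scalar $c_n$, must be computed in each of the six cases. Your route buys a real economy (one coordinate per case instead of three, plus a uniform explanation of why $c_n$ is exactly the reciprocal of the denominator appearing in $T_{\varepsilon_n}$), at the mild cost of leaning on two earlier results that the paper's self-contained computation avoids. I verified your sample case and one further case: for $\varepsilon_n=(1,0)$ the first row of $A_{(1,0)}$ yields $2-\alpha_{n+1}-\beta_{n+1}=1/(\alpha_n+\beta_n)$, as required. Both arguments are sound.
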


\begin{proof}
Let 
$\varepsilon_n= \left( 1,2 \right)$, then from the fact that 
\[
\left( \alpha_{n+1}, \beta_{n+1} \right)=
T_{\left( 1,2 \right)} 
\left( \alpha_{n}, \beta_{n} \right) = 
\left( 
\dfrac{\alpha_{n}-\beta_{n}}{1-\beta_{n}}, 
\dfrac{\beta_{n}}{1-\beta_{n}}
\right),
\]
we have
\begin{eqnarray*}
\bar{\bar \nu} \left( \alpha_{n+1}, \beta_{n+1} \right)& = & 
\dfrac{1}{1-\beta_{n}} 
{}^{t} \left( 1-\alpha_{n}-\beta_{n}, \ \alpha_{n}-\beta_{n}, \ \beta_{n} \right) \\
& =&  
\dfrac{1}{1-\beta_{n}} 
\begin{pmatrix}
1 & 0 & 0\\
0 & 1 & -1\\
0 & 0 & 1
\end{pmatrix}
\begin{pmatrix}
1-\alpha_{n}-\beta_{n}\\
\alpha_{n} \\
\beta_{n} 
\end{pmatrix}
=
c_{n} 
{}^{t} L_{\left( 1,2 \right)}^{-1}
\bar{\bar \nu} \left( \alpha_{n}, \beta_{n} \right).
\end{eqnarray*}
The proof for the other cases is analogously done.
\end{proof}

We define
$\varphi_{n}:\mathbb{R}^{3} \rightarrow
\mathbb{R}^{3}$ by
\[
\bar{\bar x}_{n}= 
{}^{t} \left( x_{n}, y_{n}, z_{n} \right)=
\varphi_{n} \left( \bar{\bar x}_{n+1}  \right)= \varphi_{n} 
{}^{t} \left( x_{n+1}, y_{n+1}, z_{n+1} \right)
:=
L_{\varepsilon_n}^{-1} \  {}^{t} \left( x_{n+1}, y_{n+1}, z_{n+1} \right).
\]
Then, we have
\begin{lemma}
\label{lem:varphi}
Let $c_{n}$ be numbers
given in Lemma \ref{lem:u}.  Then,
\[
\left< \varphi_{n} \bar{\bar x}_{
n+1}, 
\bar{\bar \nu} \left( \alpha_{n}, \beta_{n} \right)\right>
=
\dfrac{1}{c_{n}} 
\left< \bar{\bar x}_{
n+1}, \bar{\bar \nu} \left( \alpha_{n+1}, \beta_{n+1} \right)\right>, \quad \bar{\bar x}_{n+1} \in \mathbb{R}^{3}
\]
holds.
\end{lemma}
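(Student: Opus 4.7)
The plan is to reduce the identity to Lemma~\ref{lem:u} via the elementary adjoint property of the inner product, namely $\langle M\bar{\bar x}, \bar{\bar y}\rangle = \langle \bar{\bar x}, {}^t M\, \bar{\bar y}\rangle$ for any $M \in M_3(\mathbb{R})$.

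First, I would unfold the definition $\varphi_n(\bar{\bar x}_{n+1}) = L_{\varepsilon_n}^{-1}\bar{\bar x}_{n+1}$ on the left-hand side, giving
\[
\bigl\langle \varphi_n \bar{\bar x}_{n+1},\, \bar{\bar\nu}(\alpha_n,\beta_n)\bigr\rangle
= \bigl\langle L_{\varepsilon_n}^{-1} \bar{\bar x}_{n+1},\, \bar{\bar\nu}(\alpha_n,\beta_n)\bigr\rangle.
\]
Next, I would move the matrix to the second slot via the transpose, obtaining
\[
\bigl\langle L_{\varepsilon_n}^{-1} \bar{\bar x}_{n+1},\, \bar{\bar\nu}(\alpha_n,\beta_n)\bigr\rangle
= \bigl\langle \bar{\bar x}_{n+1},\, {}^t L_{\varepsilon_n}^{-1}\, \bar{\bar\nu}(\alpha_n,\beta_n)\bigr\rangle.
\]
Finally, I would apply Lemma~\ref{lem:u}, which tells us ${}^t L_{\varepsilon_n}^{-1}\, \bar{\bar\nu}(\alpha_n,\beta_n) = \frac{1}{c_n}\, \bar{\bar\nu}(\alpha_{n+1},\beta_{n+1})$, to conclude
\[
\bigl\langle \bar{\bar x}_{n+1},\, {}^t L_{\varepsilon_n}^{-1}\, \bar{\bar\nu}(\alpha_n,\beta_n)\bigr\rangle
= \frac{1}{c_n} \bigl\langle \bar{\bar x}_{n+1},\, \bar{\bar\nu}(\alpha_{n+1},\beta_{n+1})\bigr\rangle,
\]
which is the desired identity.

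There is no real obstacle here: the lemma is essentially a bookkeeping restatement of Lemma~\ref{lem:u} using the self-adjointness trick. The only thing worth emphasising is that it holds for every $\bar{\bar x}_{n+1} \in \mathbb{R}^3$ (not just integer points or points of the stepped surface), since the chain above never uses any property of $\bar{\bar x}_{n+1}$ beyond being a vector in $\mathbb{R}^3$, and that the constant $c_n$ appearing here is exactly the one defined case by case in Lemma~\ref{lem:u}.
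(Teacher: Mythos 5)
Your proposal is correct and follows exactly the same route as the paper's own proof: unfold $\varphi_n = L_{\varepsilon_n}^{-1}$, use $\langle M\bar{\bar x},\bar{\bar y}\rangle = \langle \bar{\bar x},{}^{t}M\bar{\bar y}\rangle$ to shift the matrix onto $\bar{\bar\nu}(\alpha_n,\beta_n)$, and invoke Lemma \ref{lem:u} in the form ${}^{t}L_{\varepsilon_n}^{-1}\bar{\bar\nu}(\alpha_n,\beta_n)=\frac{1}{c_n}\bar{\bar\nu}(\alpha_{n+1},\beta_{n+1})$. Nothing is missing.
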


\begin{proof}
By Lemma \ref{lem:u}, we get
\begin{eqnarray*}
\left< \varphi_{n} \bar{\bar x}_{n+1}, \bar{\bar \nu} \left( \alpha_{n}, \beta_{n} \right)\right>
&=&\left<  L_{\varepsilon_n}^{-1} \bar{\bar x}_{n+1},
\bar{\bar \nu} \left( \alpha_{n}, \beta_{n} \right)\right>
=\left<\bar{\bar x}_{n+1}, {}^{t}L_{\varepsilon_n}^{-1} 
\bar{\bar \nu} \left( \alpha_{n}, \beta_{n} \right)\right>\\
&=&\dfrac{1}{c_{n}} 
\left<\bar{\bar x}_{n+1},
\bar{\bar \nu} \left( \alpha_{n+1}, \beta_{n+1} \right)\right>.
\end{eqnarray*}
\end{proof}
From Lemma \ref{lem:varphi}, 
it follows
\begin{corollary}
We have
\begin{eqnarray*}
\varphi_{n} \left( 
{\mathcal P}^{>} \left( 
\bar{\bar \nu} \left( \alpha_{n+1}, \beta_{n+1} \right)
\right)\right) &=&
{\mathcal P}^{>} \left( 
\bar{\bar \nu} \left( \alpha_{n}, \beta_{n} \right)
\right),\\
\varphi_{n} \left( 
{\mathcal P}^{\geq} \left( 
\bar{\bar \nu} \left( \alpha_{n+1}, \beta_{n+1} \right)
\right)\right) &=&
{\mathcal P}^{\geq} \left( 
\bar{\bar \nu} \left( \alpha_{n}, \beta_{n} \right)
\right).
\end{eqnarray*}
\end{corollary}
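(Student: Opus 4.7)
The corollary should follow almost immediately from Lemma \ref{lem:varphi}, so my plan is to trace through the consequences of the sign of $c_n$ together with the invertibility of $\varphi_n$.

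First I would observe that in every case listed in Lemma \ref{lem:u}, the constant $c_n$ is strictly positive. Indeed, since $(\alpha_n,\beta_n) \in \Delta_K \subset \Delta$, we have $\alpha_n, \beta_n > 0$ and $\alpha_n + \beta_n < 1$, so $1-\beta_n$, $1-\alpha_n$, and $\alpha_n+\beta_n$ are all strictly positive. Consequently, by Lemma \ref{lem:varphi}, the real numbers
\[
\bigl\langle \varphi_n \bar{\bar x}_{n+1},\, \bar{\bar \nu}(\alpha_n,\beta_n)\bigr\rangle \quad\text{and}\quad \bigl\langle \bar{\bar x}_{n+1},\, \bar{\bar \nu}(\alpha_{n+1},\beta_{n+1})\bigr\rangle
\]
have the same sign (and in particular one vanishes iff the other does) for every $\bar{\bar x}_{n+1} \in \mathbb{R}^{3}$.

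Next I would note that $\varphi_n$ is a bijection of $\mathbb{R}^{3}$, because $L_{\varepsilon_n} \in GL_{3}(\mathbb{Z})$ (the matrices $A_{(i,j)}$ of Section \ref{sec:algo} all have determinant $\pm 1$, hence so do the $L_{(i,j)}=M_{(j,i)}$ via Lemma \ref{lem:astar}). The inclusion $\varphi_n(\mathcal{P}^{>}(\bar{\bar \nu}(\alpha_{n+1},\beta_{n+1}))) \subseteq \mathcal{P}^{>}(\bar{\bar \nu}(\alpha_n,\beta_n))$ is then immediate from the sign equivalence above. For the reverse inclusion, given $\bar{\bar y} \in \mathcal{P}^{>}(\bar{\bar \nu}(\alpha_n,\beta_n))$, set $\bar{\bar x}_{n+1} := \varphi_n^{-1}(\bar{\bar y}) = L_{\varepsilon_n} \bar{\bar y}$. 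Applying Lemma \ref{lem:varphi} to this $\bar{\bar x}_{n+1}$ gives $\langle \bar{\bar x}_{n+1}, \bar{\bar \nu}(\alpha_{n+1},\beta_{n+1})\rangle = c_n \langle \bar{\bar y}, \bar{\bar \nu}(\alpha_n,\beta_n)\rangle > 0$, so $\bar{\bar x}_{n+1} \in \mathcal{P}^{>}(\bar{\bar \nu}(\alpha_{n+1},\beta_{n+1}))$ and $\bar{\bar y} = \varphi_n(\bar{\bar x}_{n+1})$ lies in the image.

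The identical argument with $>$ replaced by $\geq$ throughout yields the second equality, because the sign-equivalence (same sign, zero iff zero) handles both strict and weak inequalities uniformly. There is no real obstacle here: the content is entirely packaged in Lemma \ref{lem:varphi} plus positivity of $c_n$ and invertibility of $\varphi_n$, and the proof should fit in a few lines.
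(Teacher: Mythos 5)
Your proof is correct and follows exactly the route the paper intends: the corollary is stated as an immediate consequence of Lemma \ref{lem:varphi}, and your argument simply makes explicit the two ingredients (positivity of $c_n$ on $\Delta_K$ and invertibility of $\varphi_n = L_{\varepsilon_n}^{-1}$) that turn the sign-preserving identity of that lemma into the two set equalities. Nothing is missing.
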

The following theorem is important 
related to the stepped surface.
\begin{theorem}['Bijectivity' of $\Theta$]
\label{th:tilingsubst}
Let us assume that 
$ \left( \alpha_{n}, \beta_{n} \right) \in 
\Delta_{K}$, then
\[
\Theta_{{\varepsilon_n} }: 
 {\mathcal G} \left( \bar{\bar \nu} \left( \alpha_{n+1}, \beta_{n+1} \right) \right)
 \rightarrow 
 {\mathcal  G}\left( \bar{\bar \nu} \left( \alpha_{n}, \beta_{n} \right) \right)
  \]
satisfies the following:
\begin{enumerate}
\item[(1)]
if $\left( \bar{\bar x}, i^{*} \right)$ is a unit square
of ${\mathcal G} \left( \bar{\bar \nu} \left( \alpha_{n+1}, \beta_{n+1} \right) \right)  $, then
the image $\Theta_{\varepsilon_n} \left( 
\bar{\bar x}, i^{*} \right)$ belongs to 
${\mathcal G} \left( \bar{\bar \nu} \left( \alpha_n, \beta_{n} \right) \right)$;
\item[(2)]
two distinct unit squares are sent to disjoint images (which are 
patches of squares) except for their boundaries;
\item[(3)]
for any $\left( \bar{\bar z}, k^{*} \right) \in
{\mathcal G} \left( \bar{\bar \nu} \left( \alpha_n, \beta_{n} \right) \right)$, there exists a square 
$\left( \bar{\bar x}, i^{*} \right) \in$\\
$ 
{\mathcal G} \left( \bar{\bar \nu} \left( \alpha_{n+1}, \beta_{n+1} \right) \right)$
such that 
$\Theta_{\varepsilon_n} \left( \bar{\bar x}, i^{*} \right) \succ \left( \bar{\bar z}, j^{*} \right)$.

\end{enumerate}
\end{theorem}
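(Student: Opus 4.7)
The plan is to verify (1), (2), (3) by a case analysis on $\varepsilon_n \in Ind$, reducing every inner-product condition at the source level $n+1$ to one at the target level $n$ via Lemma \ref{lem:varphi}.

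For (1), I would fix a unit square $(\bar{\bar{x}}, j^*) \in \mathcal{G}(\bar{\bar{\nu}}(\alpha_{n+1}, \beta_{n+1}))$, so that
\[
\langle \bar{\bar{x}}, \bar{\bar{\nu}}(\alpha_{n+1}, \beta_{n+1}) \rangle > 0, \qquad \langle \bar{\bar{x}} - \bar{\bar{e}}_j, \bar{\bar{\nu}}(\alpha_{n+1}, \beta_{n+1}) \rangle \le 0,
\]
and inspect each summand $(L_{\varepsilon_n}^{-1}(\bar{\bar{x}} + \bar{\bar{w}}), \ell^*)$ of $\Theta_{\varepsilon_n}(\bar{\bar{x}}, j^*)$ given by (\ref{eq:dualSubst}), where $\bar{\bar{w}} \in \{\bar{\bar{0}}, \bar{\bar{e}}_i\}$ and $\varepsilon_n = (i,j')$. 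Lemma \ref{lem:varphi} yields
\[
\langle L_{\varepsilon_n}^{-1}\bar{\bar{y}}, \bar{\bar{\nu}}(\alpha_n, \beta_n)\rangle = \tfrac{1}{c_n}\langle \bar{\bar{y}}, \bar{\bar{\nu}}(\alpha_{n+1}, \beta_{n+1})\rangle,
\]
with $c_n > 0$ by Lemma \ref{lem:u}, so the two inequalities defining membership of the summand in $\mathscr{S}(\bar{\bar{\nu}}(\alpha_n, \beta_n))$ translate into inequalities at level $n+1$ that follow from the hypothesis together with the positivity $0 < \alpha_{n+1}, \beta_{n+1}, 1 - \alpha_{n+1} - \beta_{n+1}$. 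This check is carried out for each of the six elements of $Ind$.

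For (2), disjointness of interiors will rest on two observations: $L_{\varepsilon_n}^{-1}$ is a $\mathbb{Z}^3$-automorphism; and Figure \ref{fig:Theta} exhibits $\Theta_{\varepsilon_n}(\mathcal{U})$, for the corner patch $\mathcal{U} = \sum_{i=0}^2(\bar{\bar{e}}_i, i^*)$, as three unit squares whose interiors are pairwise disjoint. By (\ref{eq:Thetadef}), $\Theta_{\varepsilon_n}(\bar{\bar{x}}, i^*)$ is a lattice translate of a piece of $\Theta_{\varepsilon_n}(\mathcal{U})$, so the images of two distinct input squares sit at distinct lattice positions and their interiors cannot overlap. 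For (3), I would start with an arbitrary $(\bar{\bar{z}}, k^*) \in \mathscr{S}(\bar{\bar{\nu}}(\alpha_n, \beta_n))$ and invert (\ref{eq:dualSubst}): identify the case of $\Theta_{\varepsilon_n}$ that emits a $k^*$-square and read off the shift $\bar{\bar{w}}$ and the input letter $j$, then set $\bar{\bar{x}} := L_{\varepsilon_n}\bar{\bar{z}} - \bar{\bar{w}}$. Lemma \ref{lem:varphi} again converts the stepped-surface condition on $(\bar{\bar{z}}, k^*)$ into the corresponding condition on $(\bar{\bar{x}}, j^*)$, establishing that $(\bar{\bar{x}}, j^*) \in \mathscr{S}(\bar{\bar{\nu}}(\alpha_{n+1}, \beta_{n+1}))$ and $\Theta_{\varepsilon_n}(\bar{\bar{x}}, j^*) \succ (\bar{\bar{z}}, k^*)$.

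The main obstacle will be the systematic bookkeeping in the case analysis for (1) and (3): each of the six elements of $Ind$ produces a different matrix $L_{\varepsilon_n}$, a different $c_n$, and a different placement of the shift $\bar{\bar{w}}$, so although each individual inequality reduces to a one-line inner-product computation via Lemma \ref{lem:varphi}, assembling them uniformly without omission requires care. Once (1)--(3) are verified, the three properties together yield the claimed \emph{bijectivity} of $\Theta_{\varepsilon_n}$ as an endomorphism between the two $\mathbb{Z}$-modules of patches.
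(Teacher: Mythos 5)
Your treatment of part (1) matches the paper's: fix the two defining inequalities of a square $\left( \bar{\bar x}, j^{*} \right)$ at level $n+1$ and transport each summand of $\Theta_{\varepsilon_n} \left( \bar{\bar x}, j^{*} \right)$ down to level $n$ via ${}^{t} L_{\varepsilon_n}^{-1} \bar{\bar \nu} \left( \alpha_{n}, \beta_{n} \right) = \frac{1}{c_{n}} \bar{\bar \nu} \left( \alpha_{n+1}, \beta_{n+1} \right)$ with $c_{n}>0$, running over the six cases of $Ind$. That part is sound and is exactly what the paper does (it writes out $\varepsilon_n = \left( 2,0 \right)$ and declares the rest analogous).

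Parts (2) and (3), however, each have a genuine gap. For (2), the ``lattice translate of a piece of $\Theta_{\varepsilon_n} \left( \mathcal{U} \right)$'' argument is insufficient: with $\varepsilon_n = \left( 2,0 \right)$, the patch $\Theta_{\left( 2,0 \right)} \left( \bar{\bar x}, 0^{*} \right)$ contains the square $\left( L_{\left( 2,0 \right)}^{-1} \left( \bar{\bar x} + \bar{\bar e}_{2} \right), 2^{*} \right)$ while $\Theta_{\left( 2,0 \right)} \left( \bar{\bar y}, 2^{*} \right) = \left( L_{\left( 2,0 \right)}^{-1} \bar{\bar y}, 2^{*} \right)$, and these coincide whenever $\bar{\bar x} = \bar{\bar y} - \bar{\bar e}_{2}$ --- perfectly possible for distinct lattice points, so disjointness is \emph{not} a consequence of unimodularity plus the disjointness of $\Theta_{\varepsilon_n} \left( \mathcal{U} \right)$. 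The collision is excluded only because both inputs lie on the stepped surface: $\left< \bar{\bar y} - \bar{\bar e}_{2}, \bar{\bar \nu} \left( \alpha_{n+1}, \beta_{n+1} \right) \right> \leq 0$ together with $\left< \bar{\bar x}, \bar{\bar \nu} \left( \alpha_{n+1}, \beta_{n+1} \right) \right> > 0$ forbids $\bar{\bar x} = \bar{\bar y} - \bar{\bar e}_{2}$. This is the content of case (iii) of the paper's proof of (2), and your argument omits it. For (3), ``identify the case that emits a $k^{*}$-square and read off the shift'' is ambiguous exactly where it matters: for $\varepsilon_n = \left( i, j \right)$ an output square of type $i^{*}$ has \emph{two} combinatorial preimages, $\left( L_{\varepsilon_n} \bar{\bar z} - \bar{\bar e}_{i}, j^{*} \right)$ and $\left( L_{\varepsilon_n} \bar{\bar z}, i^{*} \right)$. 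Only one of them lies on $\mathscr{S} \left( \bar{\bar \nu} \left( \alpha_{n+1}, \beta_{n+1} \right) \right)$, and which one is decided by the sign of $\left< \bar{\bar z} - \bar{\bar e}_{2} + \bar{\bar e}_{0}, \bar{\bar \nu} \left( \alpha_{n}, \beta_{n} \right) \right>$ (in the $\left( 2,0 \right)$ case) --- an inequality that is not among the two defining $\left( \bar{\bar z}, 2^{*} \right) \in \mathscr{S} \left( \bar{\bar \nu} \left( \alpha_{n}, \beta_{n} \right) \right)$, so it does not come for free from Lemma \ref{lem:varphi}. The paper handles this with the explicit dichotomy (\ref{eq:98}) versus (\ref{eq:100}); your sketch needs this branch to be a proof.
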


\begin{proof}
For instance, we consider the case where
$\varepsilon_n= \left( 2, 0 \right)$.  We shall
show the following three properties:
\begin{enumerate}
\item[(1)]
if $\left( \bar{ \bar x}, i^{*} \right) \in {\mathcal G} \left( \bar{\bar \nu} \left( \alpha_{n+1}, \beta_{n+1} \right) \right) 
$, then 
$\Theta_{\left( 2, 0 \right)} \left( \bar{ \bar x}, i^{*} \right)
\in {\mathcal  G} \left( \bar{\bar \nu} \left(\alpha_{n}, \beta_{n} \right) \right)$;
\item[(2)]
if $\left( \bar{ \bar x}, i^{*} \right) \neq \left( \bar{\bar y}, j^{*} \right)$ 
($\left( \bar{ \bar x}, i^{*} \right)$,
$\left( \bar{\bar y}, j^{*} \right)
\in {\mathcal G} \left( \bar{\bar \nu} \left( \alpha_{n+1}, \beta_{n+1} \right) \right) $), then \\
${}_g \Psi_s\left( \Theta_{\left( 2, 0 \right) }\left( \bar{ \bar x}, i^{*} \right)
\right)
\cap  {}_g \Psi_s\left( \Theta_{\left( 2, 0 \right) }\left( \bar{ \bar y}, j^{*} \right) \right)= \emptyset;$
\item[(3)]
if for any $\left( \bar{ \bar z}, k^{*} \right)
\in {\mathcal  G} \left( \bar{\bar \nu} \left(\alpha_{n}, \beta_{n} \right) \right)$, then 
there exists 
$\left( \bar{\bar x}, i^{*} \right) \in$\\
$
{\mathcal G} \left( \bar{\bar \nu} \left( \alpha_{n+1}, \beta_{n+1} \right) \right) $ such that
$\Theta_{\left( 2, 0 \right)} \left( \bar{\bar x}, i^{*} \right)
\succ \left( \bar{\bar z}, k^{*} \right)$.
\end{enumerate}

About (1): If $\left( \bar{\bar x}, i^{*} \right) \in
{\mathcal G} \left( \bar{\bar \nu} \left( \alpha_{n+1}, \beta_{n+1} \right) \right) $, $i=0,1,2$, then 
\begin{equation}
\label{eq:Theta21}
\begin{array}{rcl}
\Theta_{\left( 2, 0 \right)} \left( \bar{ \bar x}, 0^{*} \right)
& = & 
\left( L_{\left( 2,0 \right)}^{-1} \bar{\bar x}, 0^{*} \right)+
\left( L_{\left( 2,0 \right)}^{-1} \left( \bar{\bar x}  + \bar{\bar e}_{2} \right), 2^{*} \right), \\
\Theta_{\left( 2, 0 \right)} \left( \bar{ \bar x}, 1^{*} \right)
& = & 
\left(L_{\left( 2,0 \right)}^{-1} \bar{\bar x}, 1^{*} \right), 
 \\
\Theta_{\left( 2, 0 \right)} \left( \bar{ \bar x}, 2^{*} \right)
& = & 
\left(L_{\left( 2,0 \right)}^{-1} \bar{\bar x}, 2^{*} \right)
\end{array}
\end{equation}
follow from 
(\ref{eq:dualSubst}).
Hence, it suffices to show
\[
\left( L_{\left( 2,0 \right)}^{-1} \bar{\bar x}, 0^{*} \right), 
\left( L_{\left( 2,0 \right)}^{-1} \left( \bar{\bar x}  + \bar{\bar e}_{2} \right), 2^{*} \right),
\left(L_{\left( 2,0 \right)}^{-1} \bar{\bar x}, 1^{*} \right),
\left(L_{\left( 2,0 \right)}^{-1} \bar{\bar x}, 2^{*} \right)
\in {\mathcal  G} \left( \bar{\bar \nu} \left(\alpha_{n}, \beta_{n} \right) \right).
\]
From the definition of $\left( \bar{ \bar x}, i^{*} \right) \in
{\mathcal G} \left( \bar{\bar \nu} \left( \alpha_{n+1},
\beta_{n+1} \right) \right)$, we know that 
\begin{eqnarray}
\label{eqnarray:0up}
\left< \bar{\bar x}, \bar{ \bar \nu} \left( \alpha_{n+1}, \beta_{n+1} \right) \right> & > & 0, \\
\label{eqnarray:0down}
\left< \bar{\bar x}- \bar{ \bar e}_{i}, \bar{ \bar \nu} \left( \alpha_{n+1}, \beta_{n+1} \right) \right> & \leq & 0,
\end{eqnarray}
where the equality of (\ref{eqnarray:0down}) 
holds only for $\bar{\bar x}=\bar{\bar e}_{i}$.
We suppose that $\left(\bar{\bar x}, 0^{*} \right)\in {\mathcal G} \left( \bar{\bar \nu} \left( \alpha_{n+1}, \beta_{n+1} \right) \right)$.
By Lemma \ref{lem:u} and (\ref{eqnarray:0up}),
\[
\left<L_{\left( 2, 0 \right)}^{-1} \bar{ \bar x}, \bar{\bar \nu} \left( \alpha_{n}, \beta_{n} \right)\right> 
= 
\left< \bar{\bar x}, {}^{t}L_{\left( 2, 0 \right)}^{-1} \bar{\bar \nu} \left( \alpha_{n}, \beta_{n} \right)\right> =\dfrac{1}{c_{n}}
\left< 
\bar{\bar x}, 
\bar{\bar \nu} \left( \alpha_{n+1}, \beta_{n+1} \right)\right>
> 0,
\]
and 
by Lemma \ref{lem:u} and (\ref{eqnarray:0down}),
\(
\left<L_{\left( 2, 0 \right)}^{-1} \bar{ \bar x}-\bar{\bar e}_{0}, \bar{\bar \nu} \left( \alpha_{n}, \beta_{n} \right)\right>   = $\\
$
\left< L_{\left( 2, 0 \right)}^{-1} \left( 
\bar{\bar x} - \bar{\bar e}_{0} \right), \bar{\bar \nu} \left( \alpha_{n}, \beta_{n} \right)\right> 
=
\left<
\bar{\bar x} - \bar{\bar e}_{0}, {}^{t}  L_{\left( 2, 0 \right)}^{-1} 
\bar{\bar \nu} \left( \alpha_{n}, \beta_{n} \right)
\right>
=$\\
$\dfrac{1}{c_{n}}
\left<
\bar{\bar x} - \bar{\bar e}_{0}, 
\bar{\bar \nu} \left( \alpha_{n+1}, \beta_{n+1} \right)\right>\leq 0. 
\)
Hence, we obtain 
$\left( L_{\left( 2,0 \right)}^{-1} \bar{\bar x}, 0^{*} \right)$
$ \in {\mathcal  G} \left( \bar{\bar \nu} \left(\alpha_{n}, \beta_{n} \right) \right)$. 
By Lemma \ref{lem:u} and (\ref{eqnarray:0up}),
\(
\left<L_{\left( 2, 0 \right)}^{-1} 
\left( \bar{ \bar x}+ \bar{\bar e}_{2} \right), \bar{\bar \nu} \left( \alpha_{n}, \beta_{n} \right)\right> 
=\)\\
\( 
\left< \bar{\bar x}+ \bar{\bar e}_{2} , {}^{t}L_{\left( 2, 0 \right)}^{-1} \bar{\bar \nu} \left( \alpha_{n}, \beta_{n} \right)\right> =
\dfrac{1}{c_{n}}
\left< 
\bar{\bar x}+ \bar{\bar e}_{2}, 
\bar{\bar \nu} \left( \alpha_{n+1}, \beta_{n+1} \right)\right>
> 0
\)
and 
by Lemma \ref{lem:u} and (\ref{eqnarray:0down}),
\(
\left<L_{\left( 2, 0 \right)}^{-1} 
\left(
\bar{ \bar x}+\bar{\bar e}_{2}
\right)
-\bar{\bar e}_{2}, \bar{\bar \nu} \left( \alpha_{n}, \beta_{n} \right)\right>   
= 
\left< L_{\left( 2, 0 \right)}^{-1} \left( 
\bar{\bar x} - \bar{\bar e}_{0} \right), \bar{\bar \nu} \left( \alpha_{n}, \beta_{n} \right)\right> 
\)
\(=
\left<
\bar{\bar x} - \bar{\bar e}_{0}, {}^{t}  L_{\left( 2, 0 \right)}^{-1} 
\bar{\bar \nu} \left( \alpha_{n}, \beta_{n} \right)
\right>
\)
\(
=\dfrac{1}{c_{n}}
\left<
\bar{\bar x} - \bar{\bar e}_{0}, 
\bar{\bar \nu} \left( \alpha_{n+1}, \beta_{n+1} \right)\right>\leq 0. 
\)
Therefore, we get 
$\left( L_{\left( 2,0 \right)}^{-1}
\left(  \bar{\bar x}+ \bar{\bar e}_{2} \right)
, 2^{*} \right) \in {\mathcal  G} \left( \bar{\bar \nu} \left(\alpha_{n}, \beta_{n} \right) \right)$. 
We can show 
\(
\left( L_{\left( 2,0 \right)}^{-1}\bar{\bar x}, 1^* \right), 
\)
\(
\left( L_{\left( 2,0 \right)}^{-1}\bar{\bar x}, 2^* \right)
\in {\mathcal  G} \left( \bar{\bar \nu} \left(\alpha_{n}, \beta_{n} \right) \right)
\)
in a similar manner.

About (2):
For
\begin{equation}
\label{eq:difdif}
\left( \bar{\bar x}, i^{*} \right), 
\left( \bar{\bar y}, j^{*} \right) \in {\mathcal G} \left( \bar{\bar \nu} \left( \alpha_{n+1}, \beta_{n+1} \right) \right) 
\mbox{ satisfying }
\left( \bar{\bar x}, i^{*} \right) \neq 
\left( \bar{\bar y}, j^{*} \right),
\end{equation}
we will check whether
there exist a common unit square between 
$\Theta_{\left( 2, 0 \right)} 
\left( \bar{\bar x}, i^{*} \right)$ and 
$\Theta_{\left( 2, 0 \right)} 
\left( \bar{\bar y}, j^{*} \right)$.
There are six patterns of 
the combinations of $i^{*}$ and $j^{*}$ as \\
$\left\{
\left( 0^{*}, 0^{*} \right),
\left( 0^{*}, 1^{*} \right),
\left( 0^{*}, 2^{*} \right),
\left( 1^{*}, 1^{*} \right),
\left( 1^{*}, 2^{*} \right),
\left( 2^{*}, 2^{*} \right)
\right\}$.
\begin{enumerate}
\item[(i)]
Case of $\Theta_{\left( 2, 0 \right)} 
\left( \bar{\bar x}, 0^{*} \right)$ and 
$\Theta_{\left( 2, 0 \right)} 
\left( \bar{\bar y}, 0^{*} \right)$:
By 
(\ref{eq:dualSubst}),
\[
\begin{array}{rcl}
\Theta_{\left( 2, 0 \right)} \left( \bar{ \bar x}, 0^{*} \right)
& = & 
\left( L_{\left( 2,0 \right)}^{-1} \bar{\bar x}, 0^{*} \right) 
+
\left( L_{\left( 2,0 \right)}^{-1}  \left(
\bar{\bar x} + \bar{\bar e}_{2}\right) , 2^{*} \right) \\
\Theta_{\left( 2, 0 \right)} \left( \bar{ \bar y}, 0^{*} \right)
& = & 
\left( L_{\left( 2,0 \right)}^{-1} \bar{\bar y}, 0^{*} \right) 
+
\left( L_{\left( 2,0 \right)}^{-1}  \left(
\bar{\bar y} + \bar{\bar e}_{2}\right) , 2^{*} 
\right).
\end{array}
\]
Thus, if there exist a common square between 
$\Theta_{\left( 2, 0 \right)} 
\left( \bar{\bar x}, 0^{*} \right)$ and \\
$\Theta_{\left( 2, 0 \right)} 
\left( \bar{\bar y}, 0^{*} \right)$, 
then
\[\begin{array}{l}
\left( L_{\left( 2,0 \right)}^{-1} \bar{\bar x}, 0^{*} \right) 
=
\left( L_{\left( 2,0 \right)}^{-1} \bar{\bar y}, 0^{*} \right) 
\mbox{ or }\\
\left( L_{\left( 2,0 \right)}^{-1} \left( \bar{\bar x}
+ \bar{\bar e}_{2} \right), 
2^* \right) 
=
\left( L_{\left( 2,0 \right)}^{-1} \left( \bar{\bar y}
+ \bar{\bar e}_{2} \right),2^* \right) 
\end{array}
\]
holds which implies
$\bar{\bar x} = \bar{\bar y}$.  
This contradicts (\ref{eq:difdif}).
Therefore, there are no common unit 
squares
 between
 $\Theta_{\left( 2, 0 \right)} 
\left( \bar{\bar x}, 0^{*} \right)$ and 
$\Theta_{\left( 2, 0 \right)} 
\left( \bar{\bar y}, 0^{*} \right)$.
\item[(ii)]
Case of $\Theta_{\left( 2, 0 \right)} 
\left( \bar{\bar x}, 0^{*} \right)$ and 
$\Theta_{\left( 2, 0 \right)} 
\left( \bar{\bar y}, 1^{*} \right)$:
from
(\ref{eq:dualSubst}),
\[
\begin{array}{rcl}
\Theta_{\left( 2, 0 \right)} \left( \bar{ \bar x}, 0^{*} \right)
& = & 
\left( L_{\left( 2,0 \right)}^{-1} \bar{\bar x}, 0^{*} \right) 
+
\left( L_{\left( 2,0 \right)}^{-1}  \left(
\bar{\bar x} + \bar{\bar e}_{2}\right) , 2^{*} \right), \\
\Theta_{\left( 2, 0 \right)} \left( \bar{ \bar y}, 1^{*} \right)
& = & 
\left( L_{\left( 2,0 \right)}^{-1} \bar{\bar y}, 1^{*} \right).
\end{array}
\]
It is clear that there does not exist a common square between\\
$\Theta_{\left( 2, 0 \right)} 
\left( \bar{\bar x}, 0^{*} \right)$ and 
$\Theta_{\left( 2, 0 \right)} 
\left( \bar{\bar y}, 1^{*} \right)$. 
\item[(iii)]
Case of $\Theta_{\left( 2, 0 \right)} 
\left( \bar{\bar x}, 0^{*} \right)$ and 
$\Theta_{\left( 2, 0 \right)} 
\left( \bar{\bar y}, 2^{*} \right)$:
By
(\ref{eq:dualSubst}),
\[
\begin{array}{rcl}
\Theta_{\left( 2, 0 \right)} \left( \bar{ \bar x}, 0^{*} \right)
& = & 
\left( L_{\left( 2,0 \right)}^{-1} \bar{\bar x}, 0^{*} \right) 
+
\left( L_{\left( 2,0 \right)}^{-1}  \left(
\bar{\bar x} + \bar{\bar e}_{2}\right) , 2^{*} \right), \\
\Theta_{\left( 2, 0 \right)} \left( \bar{ \bar y}, 2^{*} \right)
& = & 
\left( L_{\left( 2,0 \right)}^{-1} \bar{\bar y}, 2^{*} \right).
\end{array}
\]
Thus, if there exist a common square between 
$\Theta_{\left( 2, 0 \right)} 
\left( \bar{\bar x}, 0^{*} \right)$ and \\
$\Theta_{\left( 2, 0 \right)} 
\left( \bar{\bar y}, 2^{*} \right)$, then
\[
\left( L_{\left( 2,0 \right)}^{-1} \left( \bar{\bar x}
+ \bar{\bar e}_{2} \right), 2^{*} \right) 
=
\left( L_{\left( 2,0 \right)}^{-1} \bar{\bar y}
, 2^{*} \right),
\]
i.e., 
\begin{equation}
\label{eq:91}
\bar{\bar x} = \bar{\bar y}- \bar{\bar e}_{2}.
\end{equation}
On the other hand, from 
$\left( \bar{\bar x}, 0^{*} \right), \left( \bar{\bar y}, 2^{*} \right) 
\in {\mathcal G} \left( \bar{\bar \nu} \left( \alpha_{n+1}, \beta_{n+1} \right) \right) $, 
we have
\begin{eqnarray}
\left< 
\bar{\bar x}, \bar{\bar v} \left( \alpha_{n+1}, \beta_{n+1} \right) 
\right> >0 
\label{eq:92}\\
\left< 
\bar{\bar y}-\bar{\bar e}_{2}, \bar{\bar v} \left( \alpha_{n+1}, \beta_{n+1} \right) 
\right> \leq 0.
\label{eq:93}
\end{eqnarray}
However,
using (\ref{eq:91}) and 
(\ref{eq:93}), we get 
\[
\left< \bar{\bar x}, \bar{\bar \nu} \left( \alpha_{n+1}, \beta_{n+1} \right)\right> 
=\left< \bar{\bar y}-\bar{\bar e}_{2},  \bar{\bar \nu} \left( \alpha_{n+1}, \beta_{n+1} \right)\right> 
\leq 0,
\]
which
contradicts (\ref{eq:92}).
Therefore, there are no common unit 
squares between
 $\Theta_{\left( 2, 0 \right)} 
\left( \bar{\bar x}, 0^{*} \right)$ and 
$\Theta_{\left( 2, 0 \right)} 
\left( \bar{\bar y}, 2^{*} \right)$.
\end{enumerate}
The other cases can be proved analogously.

About (3): 
For$ \left( \bar{ \bar z}, i^{*} \right) \in {\mathcal  G} \left( \bar{\bar \nu} \left(\alpha_{n}, \beta_{n} \right) \right)$,
we have
\begin{eqnarray}
\left< \bar{\bar z}, \bar{\bar v} \left( \alpha_{n}, \beta_{n} \right)  \right> >0
\label{eq:94}\\
\left< \bar{\bar z}- \bar{\bar e}_{i}, \bar{\bar v} \left( \alpha_{n}, \beta_{n} \right)  \right> \leq 0.
\label{eq:95}
\end{eqnarray}

\begin{enumerate}
\item[(i)]
For $\left( \bar{\bar z}, 0^{*} \right) \in {\mathcal  G} \left( \bar{\bar \nu} \left(\alpha_{n}, \beta_{n} \right) \right)$,
there exists
$\left( \bar{\bar x}, 0^{*} \right) \in 
{\mathcal G} \left( \bar{\bar \nu} \left( \alpha_{n+1}, \beta_{n+1} \right) \right) $ satisfying
\begin{equation}
\label{eq:96}
\bar{\bar x} = L_{\left( 2, 0 \right)} \bar{\bar z}
\end{equation}
such that
$\Theta_{\left( 2, 0 \right)} 
\left( \bar{\bar x}, 0^{*} \right)  \succ 
\left( \bar{\bar z}, 0^{*} \right)$. 
In fact, from 
(\ref{eq:dualSubst})
and 
 (\ref{eq:96}), it follows
\begin{eqnarray*}
\Theta_{\left( 2, 0 \right)} 
\left( \bar{\bar x}, 0^{*} \right) & = & 
\left( L_{\left( 2,0 \right)}^{-1} \bar{\bar x}, 0^{*} \right)+
\left( L_{\left( 2,0 \right)}^{-1} \left( \bar{\bar x}  + \bar{\bar e}_{2} \right), 2^{*} \right)\\
&=&
\left( \bar{\bar z}, 0^{*} \right)
+
\left( L_{\left( 2,0 \right)}^{-1} \left( \bar{\bar x}  + \bar{\bar e}_{2} \right), 2^{*} \right) \succ
\left( \bar{\bar z}, 0^{*} \right).
\end{eqnarray*}
By  (\ref{eq:96}), Lemma \ref{lem:u}, and (\ref{eq:94}), we get
\begin{eqnarray*}
\left< \bar{\bar x}, \bar{ \bar \nu} \left( \alpha_{n+1}, \beta_{n+1} \right) \right>
&=&
\left<L_{\left( 2, 0 \right)} \bar{\bar z}, \bar{ \bar \nu} \left( \alpha_{n+1}, \beta_{n+1} \right) \right>\\
&=&
\left<\bar{\bar z}, {}^{t} L_{\left( 2, 0 \right)} \bar{ \bar \nu} \left( \alpha_{n+1}, \beta_{n+1} \right) \right>\\
&=&c_{n} \left<\bar{\bar z}, \bar{\bar v} \left( \alpha_{n}, \beta_{n} \right)  \right>
>0.
\end{eqnarray*}
By  (\ref{eq:96}), Lemma \ref{lem:u}, and (\ref{eq:95}), we get
\begin{eqnarray*}
\lefteqn{
\left< \bar{\bar x}-\bar{\bar e}_{0}, \bar{ \bar \nu} \left( \alpha_{n+1}, \beta_{n+1} \right) \right>}\\
&=&
\left<L_{\left( 2, 0 \right)} \bar{\bar z} - \bar{\bar e}_{0}, \bar{ \bar \nu} \left( \alpha_{n+1}, \beta_{n+1} \right) \right>
=
\left<L_{\left( 2, 0 \right)} \left( \bar{\bar z} - \bar{\bar e}_{0} \right), \bar{ \bar \nu} \left( \alpha_{n+1}, \beta_{n+1} \right) \right>\\
&=&
\left<\bar{\bar z}-\bar{\bar e}_{0}, {}^{t} L_{\left( 2, 0 \right)} \bar{ \bar \nu} \left( \alpha_{n+1}, \beta_{n+1} \right) \right>
=
c_{n}
\left<\bar{\bar z}- \bar{\bar e}_{0},  \bar{\bar v} \left( \alpha_{n}, \beta_{n} \right)  \right>
\leq 0.
\end{eqnarray*}

\item[(ii)]
For $\left( \bar{\bar z}, 1^{*} \right) \in {\mathcal  G} \left( \bar{\bar \nu} \left(\alpha_{n}, \beta_{n} \right) \right)$,
there exists
$\left( \bar{\bar x}, 1^{*} \right) \in 
{\mathcal G} \left( \bar{\bar \nu} \left( \alpha_{n+1}, \beta_{n+1} \right) \right) $ satisfying
\begin{equation}
\label{eq:97}
\bar{\bar x} = L_{\left( 2, 0 \right)} \bar{\bar z}
\end{equation}
such that
$\Theta_{\left( 2, 0 \right)} 
\left( \bar{\bar x}, 1^{*} \right)  \succ 
\left( \bar{\bar z}, 1^{*} \right)$. 
In fact, from 
(\ref{eq:dualSubst}) 
and (\ref{eq:97}), it follows
\[
\Theta_{\left( 2, 0 \right)} 
\left( \bar{\bar x}, 1^{*} \right) = 
\left( L_{\left( 2,0 \right)}^{-1} \bar{\bar x}, 1^{*} \right)
=
\left( \bar{\bar z}, 1^{*} \right)
\succ 
\left( \bar{\bar z}, 1^{*} \right).
\]
By (\ref{eq:97}), Lemma \ref{lem:u}, and 
(\ref{eq:94}), we get
\begin{eqnarray*}
\lefteqn{
\left< \bar{\bar x}, \bar{ \bar \nu} \left( \alpha_{n+1}, \beta_{n+1} \right) \right>}\\
&=&
\left<L_{\left( 2, 0 \right)} \bar{\bar z}, \bar{ \bar \nu} \left( \alpha_{n+1}, \beta_{n+1} \right) \right>
=
\left<\bar{\bar z}, {}^{t} L_{\left( 2, 0 \right)} \bar{ \bar \nu} \left( \alpha_{n+1}, \beta_{n+1} \right) \right>\\
&=&
c_{n}  \left<\bar{\bar z},\bar{\bar v} \left( \alpha_{n}, \beta_{n} \right)  \right>
>0.
\end{eqnarray*}
By (\ref{eq:97}), Lemma \ref{lem:u}, and 
(\ref{eq:95}), we get
\begin{eqnarray*}
\lefteqn{
\left< \bar{\bar x}-\bar{\bar e}_{1}, \bar{ \bar \nu} \left( \alpha_{n+1}, \beta_{n+1} \right) \right>}\\
&=&
\left<L_{\left( 2, 0 \right)} \bar{\bar z} - \bar{\bar e}_{1}, \bar{ \bar \nu} \left( \alpha_{n+1}, \beta_{n+1} \right) \right>
=
\left<L_{\left( 2, 0 \right)} \left( \bar{\bar z} - \bar{\bar e}_{1} \right), \bar{ \bar \nu} \left( \alpha_{n+1}, \beta_{n+1} \right) \right>\\
&=&
\left<\bar{\bar z}-\bar{\bar e}_{1}, {}^{t} L_{\left( 2, 0 \right)} \bar{ \bar \nu} \left( \alpha_{n+1}, \beta_{n+1} \right) \right>
=
c_{n}\left<\bar{\bar z}- \bar{\bar e}_{1}, \bar{\bar v} \left( \alpha_{n}, \beta_{n} \right)  \right>
\leq 0.
\end{eqnarray*}

\item[(iii)]
For $\left( \bar{\bar z}, 2^{*} \right) \in
{\mathcal  G} \left( \bar{\bar \nu} \left(\alpha_{n}, \beta_{n} \right) \right)$,
\begin{enumerate}
\item[(a)] 
if 
\begin{equation}
\label{eq:98}
\left< 
\bar{\bar z}-\bar{\bar e}_{2}+\bar{\bar e}_{0}, 
\bar{\bar \nu} \left( \alpha_{n}, \beta_{n} \right)\right> >0,
\end{equation} then 
there exists
$\left( \bar{\bar x}, 0^{*} \right) \in 
{\mathcal G} \left( \bar{\bar \nu} \left( \alpha_{n+1}, \beta_{n+1} \right) \right) $ satisfying
\begin{equation}
\label{eq:99}
\bar{\bar x} = L_{\left( 2, 0 \right)} \bar{\bar z}-
\bar{\bar e}_{2}
\end{equation}
such that
$\Theta_{\left( 2, 0 \right)} 
\left( \bar{\bar x}, 0^{*} \right)  \succ 
\left( \bar{\bar z}, 2^{*} \right)$;
\item[(b)] 
if \begin{equation}
\label{eq:100}
\left< 
\bar{\bar z}-\bar{\bar e}_{2}+\bar{\bar e}_{0}, 
\bar{\bar \nu} \left( \alpha_{n}, \beta_{n} \right)\right> \leq 0,
\end{equation} then 
there exists
$\left( \bar{\bar x}, 2^{*} \right) \in 
{\mathcal G} \left( \bar{\bar \nu} \left( \alpha_{n+1}, \beta_{n+1} \right) \right) $ satisfying
\begin{equation}
\label{eq:101}
\bar{\bar x} = L_{\left( 2, 0 \right)} \bar{\bar z}
\end{equation}
such that
$\Theta_{\left( 2, 0 \right)} 
\left( \bar{\bar x}, 2^{*} \right)  \succ 
\left( \bar{\bar z}, 2^{*} \right)$.
\end{enumerate}

About (a): From 
(\ref{eq:dualSubst})
together with (\ref{eq:99}), it follows
\begin{eqnarray*}
\Theta_{\left( 2, 0 \right)} 
\left( \bar{\bar x}, 0^{*} \right) &=& 
\left( L_{\left( 2,0 \right)}^{-1} \bar{\bar x}, 0^{*} \right)
+
\left( L_{\left( 2,0 \right)}^{-1} 
\left( \bar{\bar x} + \bar{\bar e}_{2} \right), 2^{*} \right)\\
&=&
\left( L_{\left( 2,0 \right)}^{-1} \bar{\bar x}, 0^{*} \right)+
\left( \bar{\bar z}, 2^{*} \right)
\succ 
\left( \bar{\bar z}, 2^{*} \right).
\end{eqnarray*}
By (\ref{eq:99}), Lemma 
\ref{lem:u}, and (\ref{eq:98}), we get,
\begin{eqnarray*}
\left< \bar{\bar x}, \bar{ \bar \nu} \left( \alpha_{n+1}, \beta_{n+1} \right) \right>
&=&
\left<L_{\left( 2, 0 \right)} \bar{\bar z}-\bar{\bar e}_{2}, \bar{ \bar \nu} \left( \alpha_{n+1}, \beta_{n+1} \right) \right>\\
&=&
\left<L_{\left( 2, 0 \right)} 
\left( \bar{\bar z}-\bar{\bar e}_{2}+\bar{\bar e}_{0}\right), \bar{ \bar \nu} \left( \alpha_{n+1}, \beta_{n+1} \right) \right>\\
&=&
\left<\bar{\bar z}-\bar{\bar e}_{2}+\bar{\bar e}_{0}, {}^{t} L_{\left( 2, 0 \right)} \bar{ \bar \nu} \left( \alpha_{n+1}, \beta_{n+1} \right) \right>\\
&=&
c_{n} 
\left<\bar{\bar z}-\bar{\bar e}_{2}+\bar{\bar e}_{0}, \bar{\bar v} \left( \alpha_{n}, \beta_{n} \right)  \right>
>0.
\end{eqnarray*}
By (\ref{eq:97}), Lemma \ref{lem:u}, and 
(\ref{eq:95}),
\begin{eqnarray*}
\left< \bar{\bar x}-\bar{\bar e}_{0}, \bar{ \bar \nu} \left( \alpha_{n+1}, \beta_{n+1} \right) \right>
&=&
\left<L_{\left( 2, 0 \right)} \bar{\bar z} - \bar{\bar e}_{2}- \bar{\bar e}_{0}, \bar{ \bar \nu} \left( \alpha_{n+1}, \beta_{n+1} \right) \right>\\
&=&
\left<L_{\left( 2, 0 \right)} \left( \bar{\bar z} - \bar{\bar e}_{2} \right), \bar{ \bar \nu} \left( \alpha_{n+1}, \beta_{n+1} \right) \right>\\
&=&
\left<\bar{\bar z}-\bar{\bar e}_{2}, {}^{t} L_{\left( 2, 0 \right)} \bar{ \bar \nu} \left( \alpha_{n+1}, \beta_{n+1} \right) \right>\\
&=&
c_{n} \left<\bar{\bar z}- \bar{\bar e}_{2}, \bar{\bar v} \left( \alpha_{n}, \beta_{n} \right)  \right>
\leq 0.
\end{eqnarray*}

About (b): From 
(\ref{eq:dualSubst})
together with (\ref{eq:101}), it follows 
\[
\Theta_{\left( 2, 0 \right)} 
\left( \bar{\bar x}, 2^{*} \right) = 
\left( L_{\left( 2,0 \right)}^{-1} \bar{\bar x}, 2^{*} \right)
=
\left( \bar{\bar z}, 2^{*} \right)
\succ 
\left( \bar{\bar z}, 2^{*} \right).
\]
By (\ref{eq:101}), Lemma 
\ref{lem:u}, and (\ref{eq:94}), we get,
\begin{eqnarray*}
\lefteqn{\left< \bar{\bar x}, \bar{ \bar \nu} \left( \alpha_{n+1}, \beta_{n+1} \right) \right>}\\
&=&
\left<L_{\left( 2, 0 \right)} \bar{\bar z}, \bar{ \bar \nu} \left( \alpha_{n+1}, \beta_{n+1} \right) \right>=
\left<\bar{\bar z}, {}^{t} L_{\left( 2, 0 \right)} \bar{ \bar \nu} \left( \alpha_{n+1}, \beta_{n+1} \right) \right>\\
&=&
c_{n} \left<\bar{\bar z}, \bar{\bar v} \left( \alpha_{n}, \beta_{n} \right)  \right>
>0.
\end{eqnarray*}
By (\ref{eq:101}), Lemma 
\ref{lem:u}, and (\ref{eq:100}), we get,
\begin{eqnarray*}
\left< \bar{\bar x}-\bar{\bar e}_{2}, \bar{ \bar \nu} \left( \alpha_{n+1}, \beta_{n+1} \right) \right>
&=&
\left<L_{\left( 2, 0 \right)} \bar{\bar z} - \bar{\bar e}_{2}, \bar{ \bar \nu} \left( \alpha_{n+1}, \beta_{n+1} \right) \right>\\
&=&
\left<L_{\left( 2, 0 \right)} \left( \bar{\bar z} - \bar{\bar e}_{2}
+ \bar{\bar e}_{0}
 \right), \bar{ \bar \nu} \left( \alpha_{n+1}, \beta_{n+1} \right) \right>\\
&=&
\left<\bar{\bar z}-\bar{\bar e}_{2}+ \bar{\bar e}_{0}, {}^{t} L_{\left( 2, 0 \right)} \bar{ \bar \nu} \left( \alpha_{n+1}, \beta_{n+1} \right) \right>\\
&=&
\left<\bar{\bar z}- \bar{\bar e}_{2}+ \bar{\bar e}_{0}, ,c_{n} \bar{\bar v} \left( \alpha_{n}, \beta_{n} \right)  \right>
\leq
0.
\end{eqnarray*}
\end{enumerate}
By the arguments (i)--(iii), we obtain the 
assertion (3).

\noindent
Other cases can be  proved analogously.
\end{proof}

\begin{corollary}
\label{cor:stur}
Let $K$ be a real cubic field and  $(\alpha_{0}, \beta_{0})\in  \Delta_{K}$. 
Moreover, we assume that the sequence 
$\left\{ \left( \alpha_{n}, \beta_{n}, \varepsilon_n \right)\right\}_{n=0,1,2,\ldots}$ is periodic 
with a period of length $p$,
 i.e.,
 \begin{equation}
 \label{eq:periodic}
\exists m  \geq 0,  \exists p \geq 1: 
\left( \alpha_{m}, \beta_{m}, \varepsilon_m \right) = 
\left( \alpha_{m+p}, \beta_{m+p}, \varepsilon_{m+p} \right).
\end{equation}
Then, the stepped surface ${\mathscr S} \left(
\bar{\bar \nu} \left( \alpha_0, \beta_0 \right) \right)$ can be presented by 
\begin{equation}
\label{eq:gousei}
{}_s \Psi_g \left( {\mathscr S} \left(\bar{\bar \nu} \left( \alpha_0, \beta_0 \right) \right) \right) = 
\Theta_{\varepsilon_0} 
\circ 
\Theta_{\varepsilon_1} \circ
\cdots
\circ 
\Theta_{\varepsilon_{m-1}} \left( 
{}_s \Psi_g  \left( 
{\mathscr S}
\left( {\bar{\bar \nu}} \left( \alpha_m, \beta_{m}  \right) \right)
\right)
\right)
\end{equation}
and the stepped surface 
${\mathscr S}
\left( {\bar{\bar \nu}} \left( \alpha_m, \beta_{m}  \right) \right)$
can be characterized as the fixed point of the tiling 
substitution 
\(
\Theta= \Theta_{\varepsilon_m} \Theta_{\varepsilon_{m+1}} 
\cdots
\Theta_{\varepsilon_{m+p-1}}
\) 
by
\begin{equation}
\label{eq:Theta}
\Theta \left( 
{}_s \Psi_g \left(
{\mathscr S}
\left( {\bar{\bar \nu}} \left( \alpha_m, \beta_{m}  \right) \right)
 \right) \right)= 
{}_s \Psi_g 
\left( 
{\mathscr S} \left( {\bar{\bar \nu}} \left( \alpha_m, \beta_{m}  \right)
\right)
 \right) .
\end{equation}
\end{corollary}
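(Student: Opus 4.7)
The plan is to derive both identities (\ref{eq:gousei}) and (\ref{eq:Theta}) directly from the three ``bijectivity'' properties of Theorem \ref{th:tilingsubst}, by first establishing that these properties are preserved under composition and then exploiting the periodicity assumption.

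First, I would extend the dual substitutions $\Theta_{\varepsilon_n}$ to arbitrary (possibly infinite) formal sums indexed by subsets of $\mathscr{S}(\bar{\bar\nu}(\alpha_{n+1},\beta_{n+1}))$ in the obvious termwise way, as already suggested by (\ref{eq:Thetadef}); this is legal because every individual square is mapped into $\mathcal{G}(\bar{\bar\nu}(\alpha_n,\beta_n))$ by property (1) and the union of images is only a formal sum. Then, by induction on $k$, I would show that the composition
\[
\Xi_k := \Theta_{\varepsilon_0}\circ\Theta_{\varepsilon_1}\circ\cdots\circ\Theta_{\varepsilon_{k-1}}
\]
again has the three bijectivity properties: (1) it sends each unit square of $\mathscr{S}(\bar{\bar\nu}(\alpha_k,\beta_k))$ to a patch of $\mathscr{S}(\bar{\bar\nu}(\alpha_0,\beta_0))$; (2) images of distinct squares are disjoint up to boundaries; (3) every unit square of $\mathscr{S}(\bar{\bar\nu}(\alpha_0,\beta_0))$ lies in the image of some square under $\Xi_k$. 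The inductive step chains the three properties of $\Xi_{k-1}$ with those of $\Theta_{\varepsilon_{k-1}}$ (applied between levels $k-1$ and $k$).

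Next, to deduce (\ref{eq:gousei}), I take $k=m$ and observe that properties (1)--(3) of $\Xi_m$ together say precisely that the formal sum
\[
\sum_{\lambda\in\mathscr{S}(\bar{\bar\nu}(\alpha_m,\beta_m))}\Xi_m(\lambda)
\]
is a disjoint cover of $\mathscr{S}(\bar{\bar\nu}(\alpha_0,\beta_0))$ up to boundaries, hence coincides with ${}_s\Psi_g(\mathscr{S}(\bar{\bar\nu}(\alpha_0,\beta_0)))$. By the extended definition of $\Xi_m$ on formal sums, the left side equals $\Xi_m\bigl({}_s\Psi_g(\mathscr{S}(\bar{\bar\nu}(\alpha_m,\beta_m)))\bigr)$, which gives (\ref{eq:gousei}). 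For (\ref{eq:Theta}), I apply the same argument to the segment from index $m$ to index $m+p$: the analogous composition $\Theta=\Theta_{\varepsilon_m}\circ\cdots\circ\Theta_{\varepsilon_{m+p-1}}$ again has the three bijectivity properties, hence
\[
\Theta\bigl({}_s\Psi_g(\mathscr{S}(\bar{\bar\nu}(\alpha_{m+p},\beta_{m+p})))\bigr)={}_s\Psi_g(\mathscr{S}(\bar{\bar\nu}(\alpha_m,\beta_m))).
\]
The periodicity hypothesis (\ref{eq:periodic}) gives $\bar{\bar\nu}(\alpha_{m+p},\beta_{m+p})=\bar{\bar\nu}(\alpha_m,\beta_m)$ and thus $\mathscr{S}(\bar{\bar\nu}(\alpha_{m+p},\beta_{m+p}))=\mathscr{S}(\bar{\bar\nu}(\alpha_m,\beta_m))$, so the previous identity is exactly (\ref{eq:Theta}).

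The main obstacle, which the three properties of Theorem \ref{th:tilingsubst} exist to overcome, is justifying that the composition $\Xi_k$ is again globally bijective on stepped surfaces; in particular the surjectivity part (3) is delicate because one must rule out ``gaps,'' which requires tracking how the half-space conditions defining $\mathscr{S}(\bar{\bar\nu}(\alpha_n,\beta_n))$ transform under $L_{\varepsilon_n}^{-1}$ via Lemma \ref{lem:u} and Lemma \ref{lem:varphi}. A secondary technical point is that ${}_s\Psi_g$ and $\Theta$ are strictly speaking defined on finite configurations, so one must be explicit that the extension to the infinite stepped surface is via termwise (formal) summation, and that the inductive verification of properties (1)--(3) is insensitive to this extension because each condition is local to a single square or pair of squares.
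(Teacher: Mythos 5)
Your proposal is correct and follows essentially the same route as the paper, whose own proof is just the two-line observation that (\ref{eq:Theta}) follows from the periodicity (\ref{eq:periodic}) and that (\ref{eq:gousei}) follows from the bijectivity of the $\Theta_{\varepsilon_n}$ established in Theorem \ref{th:tilingsubst} (with the extension to infinite formal sums handled in the accompanying remark). You merely make explicit the composition of the three bijectivity properties and the termwise extension, which the paper leaves implicit.
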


\begin{remark}
Notice that by the bijectivity of  $\Theta_{\varepsilon_n}$,
the  right-hand side (resp., left-hand side)
of (\ref{eq:gousei}) (resp., (\ref{eq:Theta}))
can be defined by extending the finite sum of squares to an
infinite sum. We can do the same for
${}_s \Psi_g$ and ${}_g \Psi_s$.
\end{remark}

\begin{proof}
It is clear that (\ref{eq:Theta}) is valid by 
(\ref{eq:periodic}).
Moreover, by the bijectivity of $\Theta_{\varepsilon_n}$, $n=0,1, \ldots, m-1$, we get (\ref{eq:gousei}).
\end{proof}

Let ${\mathcal U}$ and 
${\mathcal U'}$ be fundamental patches
\[
{\mathcal U}:=\sum_{i=0}^{2}
\left( \bar{\bar e}_{i}, i^{*} \right), \
{\mathcal U'}:=  \sum_{i=0}^{2}
\left( \bar{\bar 0}, i^{*} \right).
\]
We put

\begin{eqnarray*}
\gamma_{n} &:=& 
\Theta_{\varepsilon_0  }
\ldots
\Theta_{\varepsilon_{n-2}}
\Theta_{\varepsilon_{n-1}}
\left( {\mathcal U} \right)
\quad \mbox{ for } 
{\mathcal U} \in {\mathcal G} \left( \bar{ \bar{v}} \left( \alpha_{n}, \beta_n \right) \right),\\
\mbox{(}\mbox{resp., } \gamma_{n}'  &:=& 
\Theta_{\varepsilon_0 }
\ldots
\Theta_{\varepsilon_{n-2}}
\Theta_{\varepsilon_{n-1}}
\left( {\mathcal U}' \right)
\quad \mbox{ for } {\mathcal U'} \in 
{\mathcal G}' \left( \bar{ \bar{v}} \left( \alpha_{n}, \beta_n \right)
\right)
\mbox{)},
\end{eqnarray*}
which is a sequence of 
patches of 
$\mathscr{S}
\left( \bar{\bar \nu} \left( \alpha_{0}, \beta_0 \right) \right)
$
(resp., $\mathscr{S}' \left( \bar{\bar \nu} \left( \alpha_{0}, \beta_0 \right)
\right)$).
Then we have the following.

\begin{corollary}

\begin{enumerate}
\item[(1)]
The difference of 
$\gamma_{n}$ and $\gamma_{n}'$ is that
\[
{}_g \Psi_s \left( \gamma_n \right) \backslash
{}_g \Psi_s  \left( \gamma_n' \right) 
={}_g \Psi_s  \left( {\mathcal U} \right), \quad
{}_g \Psi_s \left( \gamma_n' \right) \backslash
{}_g \Psi_s  \left( \gamma_n \right) 
=
{}_g \Psi_s  \left( {\mathcal U}' \right) 
\]
\item[(2)]
$\gamma_{n} \prec \gamma_{n+1}$ for all $n$.\\
\item[(3)] 
$\bigcup_{n=0}^{\infty} {}_g \Psi_s\left(
\gamma_{n}  \right)
\subset
{\mathscr S}\left( \bar{\bar \nu} \left( \alpha_0, \beta_0 \right) \right)$. 
\end{enumerate}
\end{corollary}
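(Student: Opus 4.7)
The plan is to establish (1), (2), and (3) simultaneously via an induction on $n$ driven by a clean identity for $\Theta_\varepsilon$ applied to $\mathcal U$ and $\mathcal U'$. The first step is to verify directly from the explicit formula (\ref{eq:dualSubst}) that for every $(i,j)\in Ind$ one has
\[
\Theta_{(i,j)}({\mathcal U})={\mathcal U}+e_{(i,j)},\qquad \Theta_{(i,j)}({\mathcal U}')={\mathcal U}'+e_{(i,j)},\qquad e_{(i,j)}:=(\bar{\bar e}_i-\bar{\bar e}_j,\,i^{*}).
\]
The matrix facts needed are $L_{(i,j)}^{-1}\bar{\bar e}_i=\bar{\bar e}_i-\bar{\bar e}_j$ and $L_{(i,j)}^{-1}\bar{\bar e}_k=\bar{\bar e}_k$ for $k\ne i$, both immediate from the incidence matrix of $\sigma_{(i,j)}$. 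The extra square $e_{(i,j)}$ is genuinely new (it belongs to neither $\mathcal U$ nor $\mathcal U'$), because $\bar{\bar e}_i-\bar{\bar e}_j$ is neither $\bar{\bar 0}$ nor any $\bar{\bar e}_k$.

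Setting $\Phi_n:=\Theta_{\varepsilon_0}\circ\cdots\circ\Theta_{\varepsilon_{n-1}}$, the $\mathbb Z$-linearity of each $\Theta_{\varepsilon_k}$ on $\mathcal G$ combined with the identity above yields
\[
\gamma_{n+1}=\Phi_n(\mathcal U+e_{\varepsilon_n})=\gamma_n+\Phi_n(e_{\varepsilon_n}),\qquad \gamma_{n+1}'=\gamma_n'+\Phi_n(e_{\varepsilon_n}).
\]
The subtlety, and the place where Theorem~\ref{th:tilingsubst} does the real work, is that ``$+$'' must be a disjoint union of unit squares. Since $\mathcal U$ and the single unit square $e_{\varepsilon_n}$ share no common square in $\mathcal G(\bar{\bar\nu}(\alpha_n,\beta_n))$, iterated application of Theorem~\ref{th:tilingsubst}(2) ensures that $\Phi_n(\mathcal U)=\gamma_n$ and $\Phi_n(e_{\varepsilon_n})$ remain disjoint as collections of unit squares after all the substitutions are performed.

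From this recursion the three conclusions are essentially automatic. Assertion (2) follows at once: $\gamma_{n+1}$ is obtained from $\gamma_n$ by adjoining the disjoint patch $\Phi_n(e_{\varepsilon_n})$, hence $\gamma_n\prec\gamma_{n+1}$. For assertion (1) I would induct on $n$: the base case $n=0$ is the disjointness of $\mathcal U$ and $\mathcal U'$, and the inductive step is immediate because the same patch $\Phi_n(e_{\varepsilon_n})$ is adjoined to both $\gamma_n$ and $\gamma_n'$, so it cancels in the symmetric difference. For assertion (3), each $(\bar{\bar e}_i,i^{*})$ manifestly satisfies the defining inequalities of $\mathscr S(\bar{\bar\nu}(\alpha_n,\beta_n))$ (the inner products $\langle\bar{\bar e}_i,\bar{\bar\nu}\rangle>0$ and $\langle\bar{\bar 0},\bar{\bar\nu}\rangle\le 0$ are trivial), so $\mathcal U\in\mathcal G(\bar{\bar\nu}(\alpha_n,\beta_n))$ for every $n$; repeated use of Theorem~\ref{th:tilingsubst}(1) then places $\gamma_n\in\mathcal G(\bar{\bar\nu}(\alpha_0,\beta_0))$, whence $\bigcup_n{}_g\Psi_s(\gamma_n)\subset\mathscr S(\bar{\bar\nu}(\alpha_0,\beta_0))$.

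The main obstacle is the preliminary identity $\Theta_\varepsilon(\mathcal U)-\mathcal U=\Theta_\varepsilon(\mathcal U')-\mathcal U'=e_\varepsilon$: the six cases of $\varepsilon\in Ind$ each require a short calculation from (\ref{eq:dualSubst}), and one must verify carefully that $e_\varepsilon$ does not silently coincide with any square of $\mathcal U$ or $\mathcal U'$ (this is why the non-vanishing of both $\bar{\bar e}_i-\bar{\bar e}_j$ and $\bar{\bar e}_j$ matters). Once this identity and the attendant disjointness via Theorem~\ref{th:tilingsubst} are in hand, the rest of the argument is structural bookkeeping.
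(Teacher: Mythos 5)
Your proof is correct; the paper states this corollary with no proof at all, and your argument supplies precisely the details that are evidently intended. The central identity $\Theta_{(i,j)}(\mathcal U)=\mathcal U+(\bar{\bar e}_i-\bar{\bar e}_j,\,i^{*})$ and its primed analogue do check out against (\ref{eq:dualSubst}) (the new square is the image of $(\bar{\bar e}_i,i^{*})$, while $(\bar{\bar e}_j,j^{*})\mapsto(\bar{\bar e}_i,i^{*})+(\bar{\bar e}_j,j^{*})$ and the third square is fixed), and feeding this into parts (1) and (2) of Theorem~\ref{th:tilingsubst} yields the three assertions exactly as you describe. The one point you leave implicit, as does the paper, is that the claims involving $\gamma_n'$ require the analogue of Theorem~\ref{th:tilingsubst} for $\mathcal G'$, since the squares $(\bar{\bar 0},k^{*})$ lie on the boundary hyperplane and belong to $\mathscr S'$ rather than $\mathscr S$; that analogue holds by the same computation with the inequalities $\geq 0$ and $<0$ in place of $>0$ and $\leq 0$.
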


\section{Examples}
We give some examples.
\begin{example}[$K$ is not totally real]
\label{ex:1}
Let $\delta$ be the real root of
$x^3-2$, $K={\Bbb Q}(\delta)$ and 
$\alpha=2/3-2\delta/3+\delta^2/6,$
$\beta=2/3+\delta/3-\delta^2/3$.
Then $\left(
\alpha,\beta \right)
\in \Delta_K$ and 
the expansion
$\left\{ \varepsilon_n \right\}^{\infty}_{n=0}$
of 
$\left( \alpha, \beta \right)$
obtained by our 
continued fraction algorithm is 
given by 
\begin{eqnarray*}
\left\{ \varepsilon_n \right\}^{\infty}_{n=0}
&=&\stackrel{*}{(2,0)},(0,2),(2,1),(2,1),(0,1),(1,0),(0,2),(0,2),(1,2),(2,1),\\
& & (1,0),\stackrel{*}{(1,0)},\ldots ,
\end{eqnarray*}
and 
the eigenvalue
$\lambda(\alpha,\beta)>1$ 
coming from the period of the expansions is a Pisot number
 with
 $x^3-57x^2+3x-1$ as its minimal polynomial (see Figure \ref{fig:4}).
On the other hand, we can observe the 
explosion phenomenon 
(as in the example in \cite{TY})
related to the expansions obtained by
the Jacobi-Perron and the modified Jacobi-Perron
algorithms;
consequently, we can not expect the 
periodicity of the expansions.      
\end{example}

\begin{figure*}[hbtp]
\begin{center}
\begin{minipage}{3cm}
\begin{center}
\includegraphics[width=2cm]
{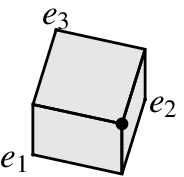}\\
$n=0$
\end{center}
\end{minipage}
\quad \quad \quad
\begin{minipage}{5cm}
\begin{center}
\includegraphics[width=4.5cm]
{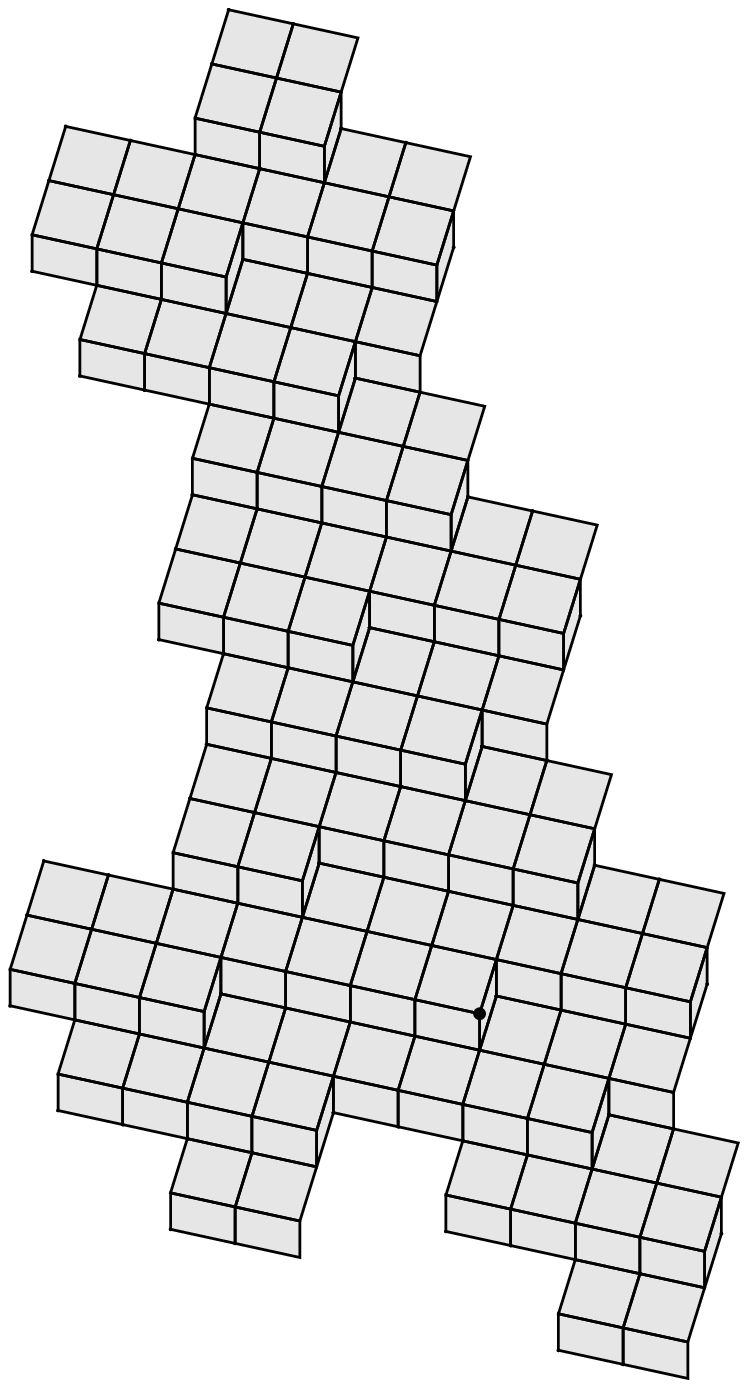}\\
$n=1$
\end{center}
\end{minipage}
\begin{center}
\includegraphics[width=12cm]{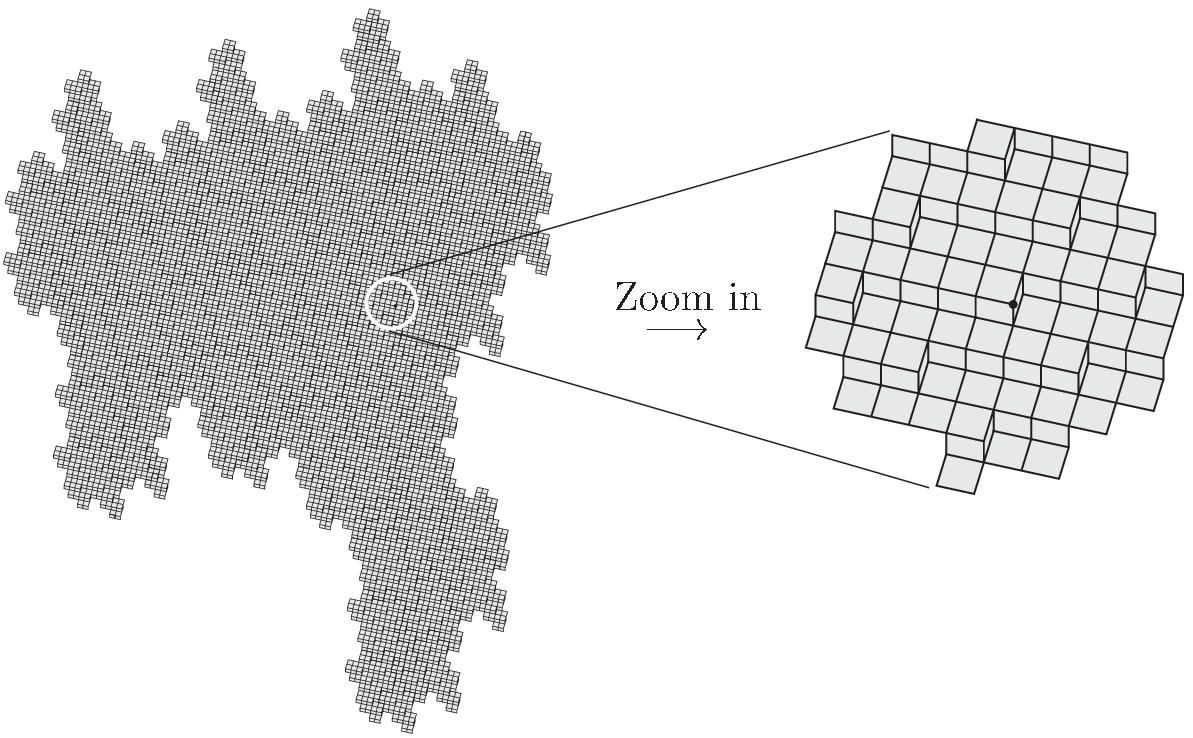}\\
$n=2$
\end{center}

\end{center}
\caption{$\displaystyle
\left(
\Theta_{{(2,0)}}
\Theta_{{(0,2)}}
\Theta_{{(2,1)}}
\Theta_{{(2,1)}}
\Theta_{{(0,1)}}
\Theta_{{(1,0)}}
\Theta_{{(0,2)}}
\Theta_{{(0,2)}}
\Theta_{{(1,2)}}
\Theta_{{(2,1)}}
\Theta_{{(1,0)}}
\Theta_{{(1,0)}}
\right)^{n}
\left( {\mathcal U} \right)$ in Example \ref{ex:1}
where the point is located at $\left( 1,1,1 \right)$.}
\label{fig:4}
\end{figure*}

\begin{example}[$K$ is totally real]
\label{ex:3}
\begin{enumerate} 
\item[(1)]
Let $\delta$ be the root of  $x^3-6x^2+7x-1$ with $\delta>4$,
$K=\mathbb{Q} \left( \delta \right)$  
and
$\alpha=-1/3-4\delta/3+\delta^2/3$,
 $\beta=-2+5\delta-\delta^2$.
Then, $\left(
\alpha,\beta \right)
\in \Delta_K$ and 
$^{t} \left( 1 - \alpha-\beta, \alpha, \beta  \right)$
is an eigenvector of 
$M_{(1,0)}M_{(0,1)}M_{(2,0)}M_{(0,2)}M_{(0,1)}$
with respect to its eigenvalue $\delta$.
We note that $\delta$ is not a Pisot number
(see Figure \ref{fig:piropiro2}).

\item[(2)]
On the other hand, we have the expansion of 
$\left(
\alpha,\beta
\right)$ 
as follows:
\begin{align*}
\left\{ \varepsilon_n \right\}_{n=0}^{\infty}=&
(1, 0), (0, 2), (2, 1), (2, 1), (2, 1), (1, 0), (1, 0), \\
&\stackrel{*}{(1, 0)},(0, 2), (2, 0), (1, 0), 
(2, 0), (0, 1), (1, 0),
\stackrel{*}{(2, 0)}, \ldots ,
\end{align*}
and $\lambda(\alpha,\beta)$ is a Pisot number having
its minimal  
polynomial $x^3-13x^2+10x-1$
(see Figure \ref{fig:even2Period1kai}).
\end{enumerate}
\end{example}

\begin{figure*}[hbtp]
\begin{center}
\begin{minipage}{2cm}
\begin{center}
\includegraphics[width=1.8cm]{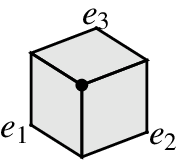}\\
$n=0$
\end{center}
\end{minipage}
\quad \quad \quad
\begin{minipage}{6cm}
\begin{center}
\includegraphics[width=6.5cm]{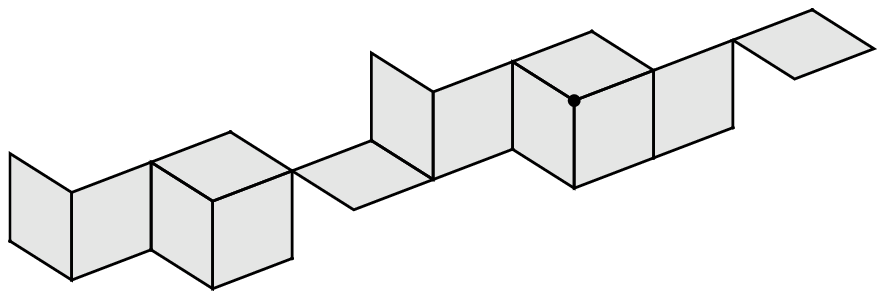}\\
$n=1$
\end{center}
\end{minipage}\\
\myvcenter{\includegraphics[width=14cm]{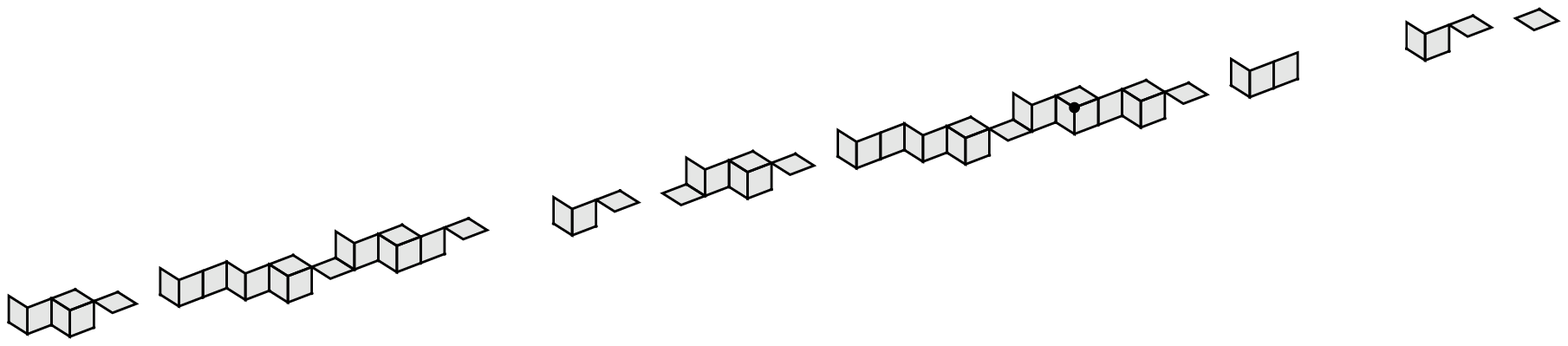}}\\
$n=2$
%

\end{center}
\caption{$\displaystyle
(\Theta_{(1,0)}\Theta_{(0,1)}\Theta_{(2,0)}
\Theta_{(0,2)} \Theta_{(0,1)})^n \left({\mathcal U} \right)$ in Example \ref{ex:3} (1).}
\label{fig:piropiro2}
\end{figure*}

\begin{figure*}[hbtp]
\begin{center}
\begin{minipage}{2cm}
\begin{center}
\includegraphics[width=1.8cm]{exam62no1-0step.eps}\\
$n=0$
\end{center}
\end{minipage}
\quad \quad \quad
\begin{minipage}{6cm}
\begin{center}
\includegraphics[width=7cm]{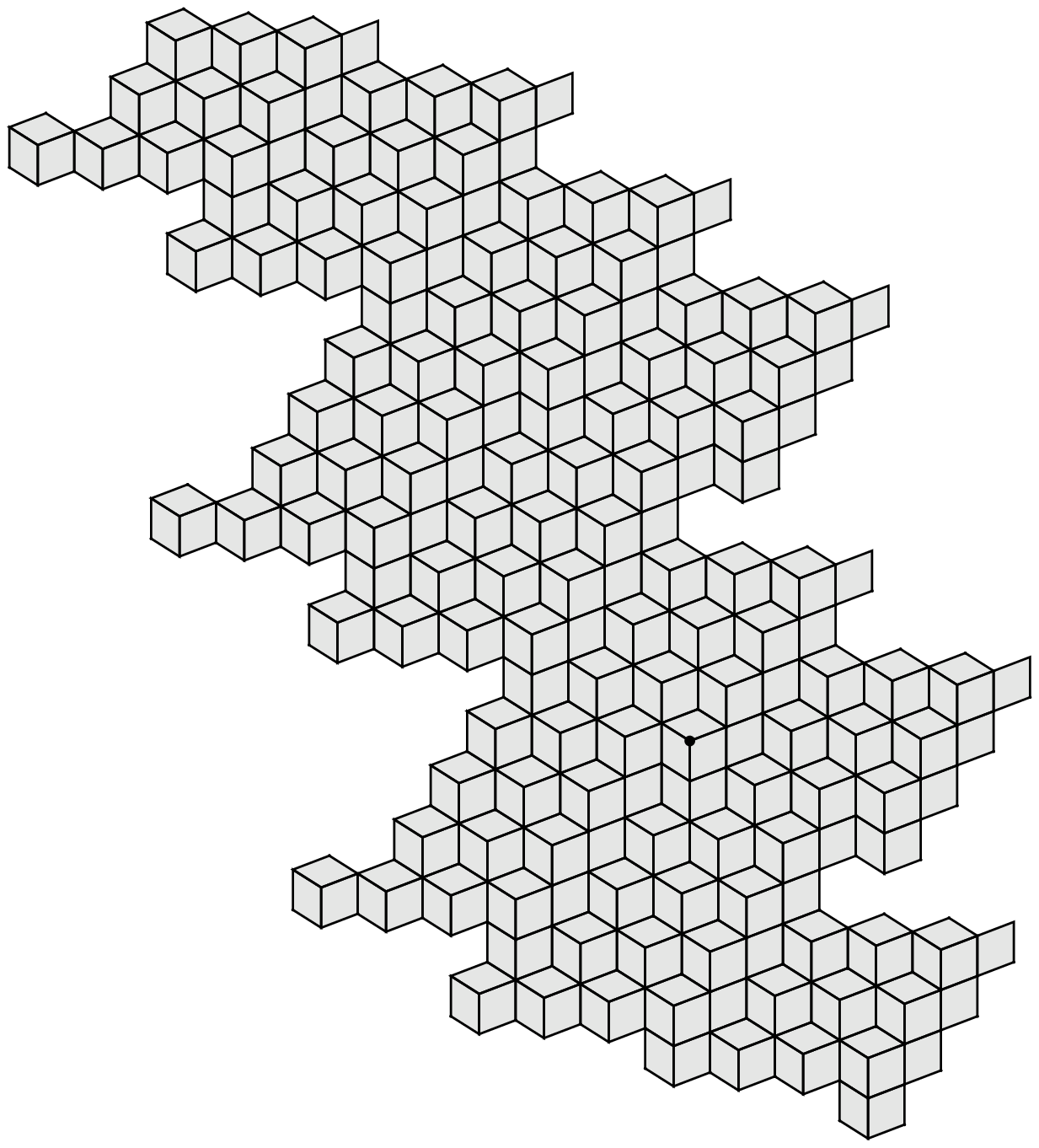}\\
$n=1$
\end{center}
\end{minipage}\\

\begin{center}
\includegraphics[width=12cm]{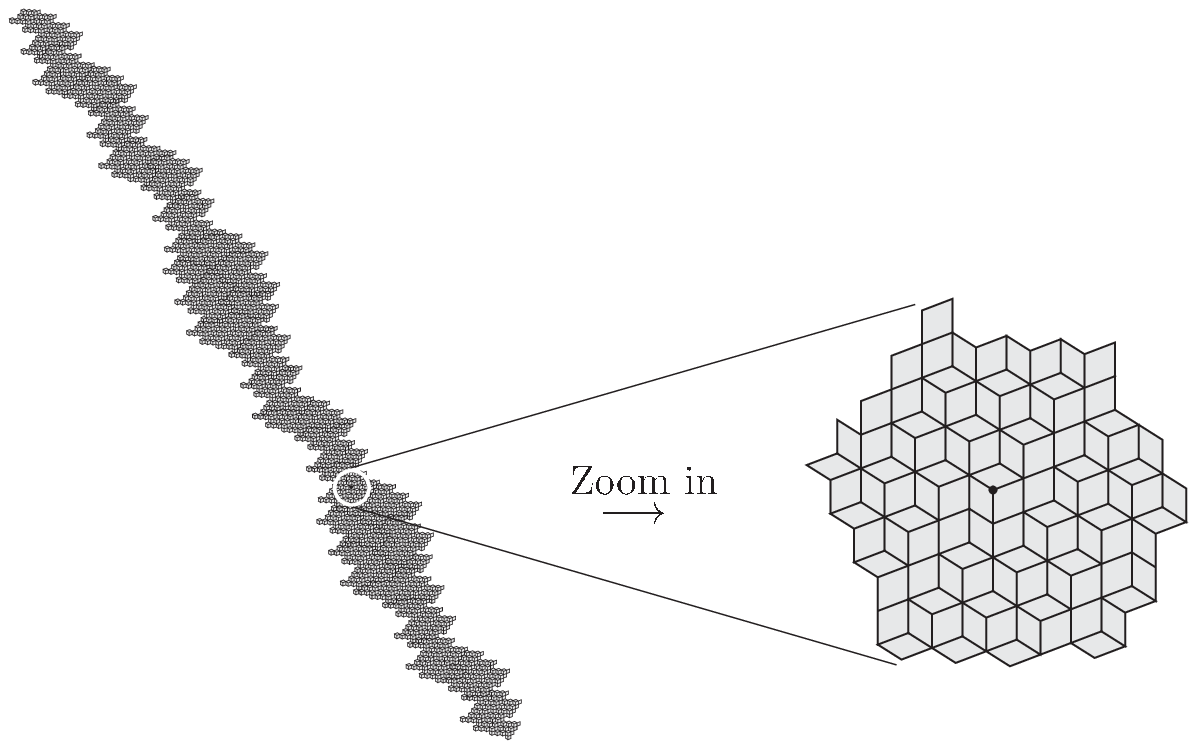}\\
$n=2$
\end{center}

%
\end{center}
\caption{$\displaystyle
\Theta_{(1,0)}
\Theta_{(0,2)}
\Theta_{(2,1)}
\Theta_{(2,1)}
\Theta_{(2,1)}
\Theta_{(1,0)}
\Theta_{(1,0)}
\left( 
\Theta_{(1,0)}
\Theta_{(0,2)}
\Theta_{(2,0)}
\Theta_{(1,0)}
\Theta_{(2,0)}
\Theta_{(0,1)}
\Theta_{(1,0)}
\Theta_{(2,0)}
\right)^{n}
\left( {\mathcal U} \right)$
in Example \ref{ex:3} (2).}
\label{fig:even2Period1kai}
\end{figure*}

\begin{example}[$K$ is not totally real]
\label{ex:4}
\begin{enumerate} 
\item[(1)] (Completely non-admissible)\\
Let $\delta$ be the real root of  $x^3-5x^2-2x-1$,
$K=\Bbb{Q}(\delta)$
and
$\alpha=11/5+9\delta/5-2\delta^2/5$,
 $\beta=-7/5+7\delta/5-\delta^2/5$.
Then, $(\alpha,\beta)\in \Delta_K$ and ${}^t(1-\alpha-\beta,\alpha,\beta)$
is an eigenvector of 
$M_{(0,1)}M_{(2,0)}M_{(1,2)}M_{(0,1)}M_{(2,0)}M_{(1,2)}$
with respect to its eivenvalue $\delta$.
We note that $(0,1)(2,0)$,
$(2,0)(1,2)$,
$(1,2)(0,1)$,
$(2,0)(1,2)$,
$(1,2)(0,1)$ are 
forbidden words given
in Table \ref{table:forbidden}
and $\delta$ is a Pisot number
(see Figure \ref{fig:tori2}).
\item[(2)]
We have the expansion of 
$\left(
\alpha,\beta
\right)$ as follows:
\begin{align*}
\left\{\varepsilon_n
\right\}_{n=0}^{\infty}=
\stackrel{*}{(0, 2)}, (2, 1), (1, 0), (1, 0), (0, 2), (2, 1),
(2, 1), (1, 0),\stackrel{*}{(0, 2)}, \ldots ,
\end{align*}
and $\lambda
\left(
\alpha,\beta
\right)$ is a Pisot number having  $x^3-29x^2-6x-1$
as its minimal polynomial
(see Figure  \ref{fig:tori6kai}).    
\end{enumerate}
\end{example}

\begin{figure*}[hbtp]
\begin{center}
\begin{minipage}{5cm}
\begin{center}
\includegraphics[width=1.5cm]{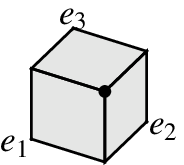}\\
$n=0$\\
\vspace{1cm}
\end{center}
\end{minipage}
\begin{minipage}{4cm}
\begin{center}
\includegraphics[width=4cm]{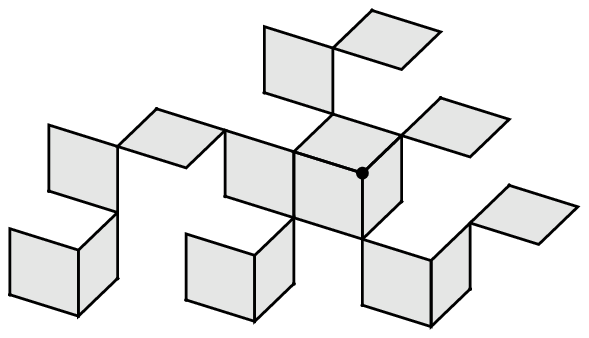}\\
$n=1$
\end{center}
\end{minipage}
\quad
\begin{minipage}{9cm}
\begin{center}
\includegraphics[width=9cm]{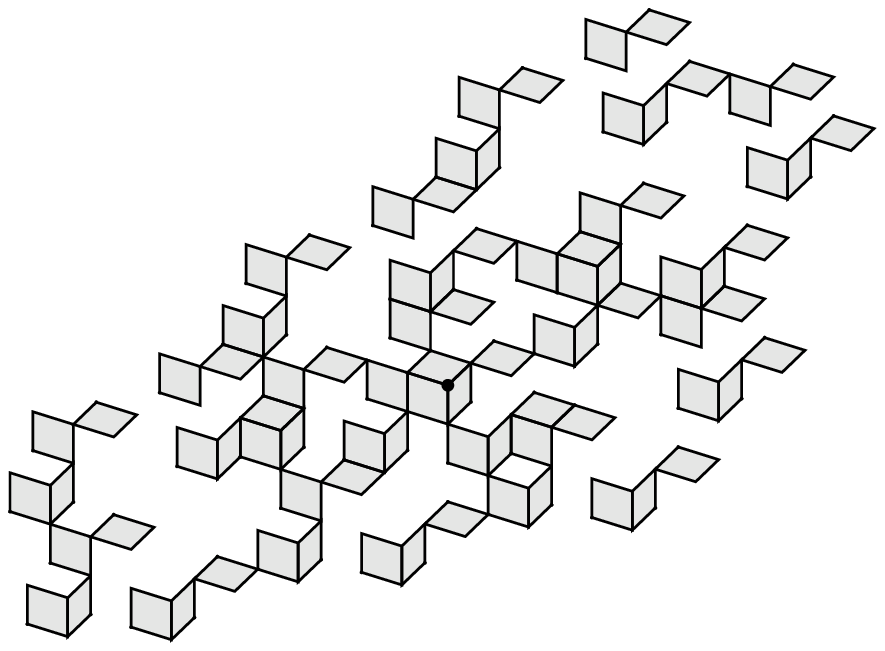}\\
$n=2$
\end{center}
\end{minipage}

\end{center}
\caption{$(
\Theta_{(0,1)}
\Theta_{(2,0)}
\Theta_{(1,2)}
\Theta_{(0,1)}
\Theta_{(2,0)}\Theta_{(1,2)} 
)^n \left({\mathcal U} \right)$ 
in Example \ref{ex:4} (1).}
\label{fig:tori2}
\end{figure*}

\begin{figure*}[hbtp]
\begin{center}
\begin{minipage}{2cm}
\begin{center}
\includegraphics[width=1.8cm]{exam63no1-0step.eps}\\
$n=0$
\end{center}
\end{minipage}
\quad \quad \quad
\begin{minipage}{6cm}
\begin{center}
\includegraphics[width=6cm]{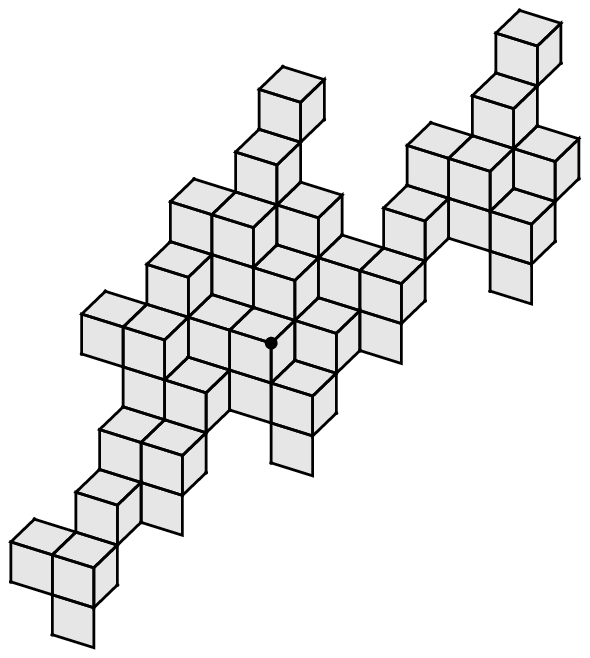}\\
$n=1$
\end{center}
\end{minipage}\\
\bigskip
\begin{center}
\includegraphics[width=12cm]{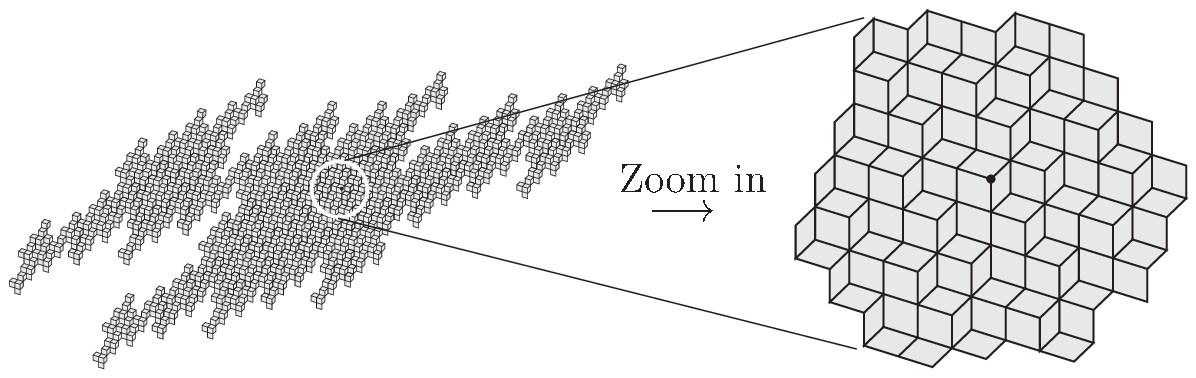}\\
$n=2$
\end{center}

%
\end{center}
\caption{$\left(
\Theta_{(0,2)}
\Theta_{(2,1)}
\Theta_{(1,0)}
\Theta_{(1,0)}
\Theta_{(0,2)}
\Theta_{(2,1)}
\Theta_{(2,1)}
\Theta_{(1,0)}
\Theta_{(0,2)}
\right)^{n}
\left( {\mathcal U} \right)$
in Example \ref{ex:4} (2).}
\label{fig:tori6kai}
\end{figure*}
\clearpage

\section{Conjectures}
\label{sec:con}

We give following conjectures which are supported by the numerical experiments. 

\begin{conjecture}
\label{con1}
Let $K$ be a real cubic field and $r=5/2$.
$\Delta_K= \Delta_{K, r}^{\mathcal{P}er}$ holds for any real cubic field $K$.
\end{conjecture}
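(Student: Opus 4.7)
The plan is to adapt the classical proof strategy of Lagrange's theorem to the present cubic setting: show that for any $(\alpha,\beta)\in\Delta_K$ the orbit $\{(\alpha_n,\beta_n)\}_{n\geq 0}$ is eventually trapped in a finite subset of $K^2\cap\Delta_K$, whence periodicity is automatic. Since $\Delta_K$ is $T_{K,r}$-invariant by construction and the matrices $A_{(i,j)}$ are unimodular with integer entries, the orbit lives in the $K$-projectivization of a single $\mathbb{Z}$-lattice; after clearing a common denominator determined by the initial point, one gets a concrete lattice model for the iterates to analyze.

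First, I would derive an exact algebraic representation: write $\iota(\alpha_n,\beta_n)\sim M_{\varepsilon_0}^{-1}\cdots M_{\varepsilon_{n-1}}^{-1}\iota(\alpha,\beta)$ (using the notation of Theorem \ref{th:varepsilon}) and track the numerators and denominators in a fixed integral basis of $K$. Second, I would seek bounds on the Galois conjugates $\,_1\rho_K(\alpha_n),\,_1\rho_K(\beta_n),\,_2\rho_K(\alpha_n),\,_2\rho_K(\beta_n)$: the value function $v_r$ was specifically designed so that the branch selected at each step is the one maximizing a quantity of the form $|\cdot|^r/|N(\cdot)|$, which strongly biases the dynamics towards coordinates whose conjugates are small. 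One would attempt to prove that this choice rule gives a uniform bound $|\,_i\rho_K(\alpha_n)|,|\,_i\rho_K(\beta_n)|\leq C(K,\alpha,\beta)$, perhaps after an initial transient. Third, one combines such a conjugate bound with the fact that $N(\alpha_n)$ and $N(\beta_n)$ have bounded denominators (they lie in a fixed fractional ideal determined by the initial point, again by unimodularity) to conclude via the Minkowski-type finiteness of algebraic integers of bounded height and bounded conjugates that only finitely many $(\alpha_n,\beta_n)$ can occur.

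The hard part will be the second step, the conjugate-bounding estimate. Unlike the quadratic case, where a one-dimensional "reduced" condition suffices and is preserved by the Gauss map in a completely elementary way, here one has two conjugate directions to control simultaneously under six possible branches, and the choice between them depends on the real-analytic value function $v_r$ in a way that is not obviously contractive in the conjugate coordinates. Moreover, an honest contraction estimate would morally be the Pisot property $\lambda(\alpha,\beta)$ encoded in Conjecture \ref{con2}: once periodicity is established, the Pisot property ensures contraction on conjugates, but one cannot use it to prove periodicity without circularity. A realistic strategy is therefore to bootstrap: establish a weak a priori estimate (e.g.\ showing the conjugates do not escape to infinity, using the forbidden-word admissibility of Theorem \ref{t2} to rule out bad runs) and then promote this to a genuine bound by combinatorial analysis of all finite admissible words of length up to the period. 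Proving Conjecture \ref{con1} unconditionally is likely to require new ideas; the plan above isolates the precise geometric obstacle that one has to overcome.
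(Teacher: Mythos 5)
The statement you were asked to prove is presented in the paper as Conjecture \ref{con1}: the authors give no proof of it, only numerical evidence (the periodicity tests of Table \ref{table:C} for $\left(\langle\sqrt[3]{m}\rangle/2,\langle\sqrt[3]{m^2}\rangle/2\right)$ with $2\le m\le 10000$, and the experiments on the set $N_{15}$). So there is no paper proof to compare against, and your proposal has to stand on its own as an attempted unconditional argument.

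As such, it has a genuine gap, and you have correctly located it yourself: step two, the uniform bound on the Galois conjugates $\,_1\rho_K(\alpha_n)$, $\,_2\rho_K(\alpha_n)$, etc., along the orbit. The rest of your outline — the exact projective representation of the iterates via the unimodular matrices, the bounded-denominator observation coming from the fixed lattice, and the finiteness of algebraic numbers of bounded denominator with all conjugates bounded — is standard and would indeed yield eventual periodicity \emph{if} the conjugate bound were available. But no such bound is established, and you give no mechanism for obtaining one: the value function $v_r$ selects the branch maximizing a quantity of the form $|\cdot|^r/|N(\cdot)|$, which heuristically penalizes coordinates with large conjugates, but converting that heuristic into a non-escape estimate uniformly over the six branches is exactly the open problem. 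The admissibility constraints of Theorem \ref{t2} (twelve forbidden words of length two) are far too weak to rule out long admissible runs that inflate a conjugate, and your remark about circularity with Conjecture \ref{con2} is apt — the Pisot property would supply the needed contraction on conjugates, but it is itself only formulated for expansions already known to be periodic. In short, what you have written is a sensible research program that isolates the correct obstacle, not a proof; the statement remains a conjecture both in the paper and after your attempt.
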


\begin{conjecture}
\label{con2}
Let $K$ be a real cubic field 
 and let $\alpha^{(0)}, \alpha^{(1)}, \alpha^{(2)}$
be its positive ${\Bbb Q}$-basis with
\begin{align*}
\alpha=\frac{\alpha^{(1)}}{\alpha^{(0)}+\alpha^{(1)}+\alpha^{(2)}},\
\beta=\frac{\alpha^{(2)}}{\alpha^{(0)}+\alpha^{(1)}+\alpha^{(2)}}. 
\end{align*}
Let  $\left\{
\varepsilon_n \right\}_{n=0}^{\infty}$
be the expansion of the $(\alpha,\beta)$.
Suppose that $\varepsilon_{k+1},\ldots, \varepsilon_{k+l}$ 
is the period of the expansion.
Then, 
$M_{\varepsilon_{k+1}}\ldots M_{\varepsilon_{k+l}}$
has a Pisot number as its eigenvalue.
\end{conjecture}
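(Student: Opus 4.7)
My plan is to analyze the product $M := M_{\varepsilon_{k+1}} \cdots M_{\varepsilon_{k+l}}$ by applying Perron--Frobenius together with the structural results already proved in the paper. Each $M_{(i,j)}$ is a non-negative integer matrix, and a direct check shows $\det M_{(i,j)} = +1$; hence $M$ is a primitive (Theorem~\ref{th:primitive}) unimodular non-negative integer matrix of size $3$. Perron--Frobenius then gives a strictly dominant real eigenvalue $\lambda>0$ with strictly positive eigenvector, and because $M$ fixes the positive ray spanned by ${}^{t}(1-\alpha_{k+1}-\beta_{k+1},\,\alpha_{k+1},\,\beta_{k+1})$, this must be the Perron eigenvector, so $\lambda = \lambda(\alpha,\beta)$. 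Moreover $\lambda>1$, since a primitive non-negative integer matrix of size $\geq 2$ cannot be a permutation.

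Lemma~\ref{lem:primitive} then forces $\lambda\in K\setminus\mathbb{Q}$, so the characteristic polynomial of $M$ is an irreducible monic cubic over $\mathbb{Z}$ with constant term $-1$; its other two roots $\mu_1,\mu_2$ are the Galois conjugates of $\lambda$, and $\lambda\mu_1\mu_2 = 1$. So the conjecture reduces to showing $|\mu_1|<1$ and $|\mu_2|<1$. If $K$ is not totally real, $\mu_1$ and $\mu_2$ are complex conjugates and $|\mu_1|^2 = |\mu_1\mu_2| = 1/\lambda < 1$, so $\lambda$ is Pisot; this is consistent with the uniform exponent $\eta=3/2$ in Proposition~\ref{P1}.

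The totally real case is the main obstacle. All three eigenvalues are real, and Perron--Frobenius with unimodularity only forces $|\mu_i|<\lambda$ and $|\mu_1\mu_2|=1/\lambda$, leaving open the degenerate scenario $|\mu_1|\in[1,\lambda)$ with $|\mu_2|\in(0,1/\lambda]$. I would try to rule this out by promoting the bijectivity of the dual substitution to a genuine uniform contraction. Iterating Theorem~\ref{th:tilingsubst} over the period, $\Theta = \Theta_{\varepsilon_{k+1}}\circ\cdots\circ\Theta_{\varepsilon_{k+l}}$ is a tiling substitution of $\mathscr{S}(\bar{\bar\nu}(\alpha,\beta))$ onto itself; combined with Lemma~\ref{lem:u} and its corollary, this should force the action of $M$ on the $2$-dimensional $M$-invariant subspace transverse to the Perron eigendirection to be contracting, i.e.\ $\max(|\mu_1|,|\mu_2|)<1$.

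An equivalent formulation uses Proposition~\ref{P1}: a direct computation against a generic lattice vector shows $\eta(\alpha,\beta) = 1 - \log|\mu_{\max}|/\log\lambda$, so the conjecture is equivalent to sharpening Proposition~\ref{P1} from $\eta>0$ to $\eta>1$ in the totally real case. The hard part is a uniform estimate on the cone $\mathcal{P}^{>}(\bar{\bar\nu})$ valid for \emph{every} admissible period word produced by the algorithm. The admissibility constraints from Theorem~\ref{t2} (equivalently, Table~\ref{table:forbidden}) should play the decisive role: they should exclude exactly those products of $M_{(i,j)}$'s whose characteristic polynomial fails to be Pisot, such as the non-admissible product $M_{(1,0)}M_{(0,1)}M_{(2,0)}M_{(0,2)}$ flagged at the end of Section~\ref{sec:num}. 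A natural concrete tactic would be to enumerate the finitely many admissible length-$2$ transitions, rescale the cone using Lemma~\ref{lem:u}, and show the resulting projective action on the fibre over $(\alpha,\beta)$ is a strict contraction of some adapted Hilbert metric, from which the required spectral gap follows.
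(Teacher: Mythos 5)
You should first note that the statement you are proving is Conjecture \ref{con2}: the paper offers no proof of it, only numerical evidence ($18797$ periodic expansions from $N_{15}$ plus further random experiments), so there is no ``paper's proof'' to match. Judged on its own terms, your attempt is sound and complete only for the case where $K$ has complex embeddings: there the product $M$ is primitive (Theorem \ref{th:primitive}), has determinant $1$, its Perron root $\lambda$ is irrational (Lemma \ref{lem:primitive}) hence $\lambda>1$ and the characteristic polynomial is the minimal polynomial of $\lambda$ over the cubic field $K=\mathbb{Q}(\lambda)$, and the two remaining roots are complex conjugates of modulus $\lambda^{-1/2}<1$. That part is a genuine (easy) theorem. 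The totally real case, which is where the conjecture actually lives, is left as a plan rather than a proof, and it is exactly the case the authors could not settle.

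Moreover, the first of your two proposed tactics for the totally real case cannot work. Bijectivity of the dual substitutions (Theorem \ref{th:tilingsubst}) is proved using only Lemma \ref{lem:u} and the defining inequalities of the stepped surface; it holds for \emph{any} composition of the maps $T_{(i,j)}$ applied on the correct domains, independently of whether the digits are the ones selected by the value function $v_r$. The paper itself exhibits this: in Example \ref{ex:3}\,(1) the product $M_{(1,0)}M_{(0,1)}M_{(2,0)}M_{(0,2)}M_{(0,1)}$ fixes a positive direction in a totally real cubic field, the corresponding dual substitution iterates perfectly well on the stepped surface (Figure \ref{fig:piropiro2}), and yet its eigenvalue $\delta$ is \emph{not} Pisot. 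So no amount of ``promoting bijectivity to a uniform contraction'' can succeed; the transverse action of a unimodular primitive product need not contract. The decisive input has to be the admissibility imposed by the value function, and here your second tactic is only aimed in the right direction: Table \ref{table:forbidden} rules out the word $(0,1)(2,0)$ occurring in that counterexample, but Theorem \ref{t2} gives only length-$2$ constraints, and nothing in your sketch shows that forbidding finitely many factors forces a spectral gap for every admissible periodic word (indeed, it is not even known that the set of admissible periodic words is defined by finitely many forbidden factors). Until that step is supplied, the totally real case --- and hence the conjecture --- remains open.
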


The following theorem (Theorem 5 in 
Fernique \cite{F})
together with Conjectures \ref{con1}, \ref{con2} implies
Conjecture \ref{conclusion}.

\begin{theorem}[Fernique] 
\label{th:6.3}
Let ${\mathscr S} \left( \bar{\bar \alpha} \right)$ be a stepped surface of such that there exist two generalized substitutions $\Theta_{Prep}$ and 
$\Theta_{{\mathcal P}er}$ verifying:
\[
{\mathscr S} \left( \bar{\bar \alpha} \right) = \Theta_{Prep}
\left( {\mathscr S} \right) \mbox{ with } 
{\mathscr S} = \Theta_{{\mathcal P}er} 
\left( {\mathscr S} \right).
\]
If $\Theta_{{\mathcal P}er}$ is of Pisot type and 
bijective on $\mathscr S$, then there exists a finite patch 
$P$ of $\mathscr S$ such that 
\[
{\mathscr S} \left( \bar{\bar \alpha}\right) =
\Theta_{Prep} \left( 
\lim_{n \rightarrow \infty} \Theta_{{\mathcal P}er}
\left( P \right)
\right).
\]
\label{th:fer}
\end{theorem}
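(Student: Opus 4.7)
The plan is to produce a finite patch $P \prec \mathscr{S}$ with the self-embedding property $P \prec \Theta_{{\mathcal P}er}(P)$, and then pass to the limit along the nested chain of iterates. Once such a $P$ is in hand, the bijectivity of $\Theta_{{\mathcal P}er}$ on $\mathscr{S}$ gives a well-defined strictly increasing sequence
\[
P \prec \Theta_{{\mathcal P}er}(P) \prec \Theta_{{\mathcal P}er}^{2}(P) \prec \cdots \prec \mathscr{S},
\]
and applying the prefix substitution $\Theta_{Prep}$, which commutes with directed monotone limits, yields $\Theta_{Prep}(\lim_n \Theta_{{\mathcal P}er}^n(P)) \prec \Theta_{Prep}(\mathscr{S}) = \mathscr{S}(\bar{\bar\alpha})$. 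The game is then to prove the reverse inclusion, i.e.\ that this limit already fills all of $\mathscr{S}$.

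First I would construct $P$. Since $\Theta_{{\mathcal P}er}$ is of Pisot type, its incidence matrix is primitive (its characteristic polynomial is the minimal polynomial of a Pisot number, hence irreducible, and the matrix is non-negative with a simple dominant eigenvalue $\lambda > 1$). Picking any unit square $\lambda_0 \in \mathscr{S}$, primitivity gives an integer $N$ such that $\Theta_{{\mathcal P}er}^{N}(\{\lambda_0\}) \succ \{\lambda_0\}$; replacing $\Theta_{{\mathcal P}er}$ by $\Theta_{{\mathcal P}er}^N$ (equivalently, replacing the period by its $N$-th power, which is still Pisot and bijective on $\mathscr{S}$) lets me assume $N=1$, and I set $P := \{\lambda_0\}$ or a slight enlargement that is closed under the inclusion $\Theta_{{\mathcal P}er}(P) \succ P$.

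The hardest step is showing that the monotone limit $\mathscr{S}_\infty := \lim_{n\to\infty}\Theta_{{\mathcal P}er}^n(P)$ equals $\mathscr{S}$. This is where the Pisot hypothesis is used essentially, not just via primitivity. The key geometric input is that on the contracting hyperplane ${\mathcal P}(\bar{\bar\alpha})$ the linearization of $\Theta_{{\mathcal P}er}$ has all non-dominant eigenvalues of modulus $< 1$, so the projection $\pi_c$ onto ${\mathcal P}(\bar{\bar\alpha})$ of the iterated images of $P$ is a Cauchy sequence of compacta that grows by the factor $\lambda$ in the surface direction while contracting transversely. Combined with bijectivity of each $\Theta_{{\mathcal P}er}^n$ on $\mathscr{S}$, any would-be unit square $\mu \in \mathscr{S} \setminus \mathscr{S}_\infty$ admits, by relative density of lattice translates in ${\mathcal P}(\bar{\bar\alpha})$ (the classical Kronecker / Weyl argument, using that the slope of ${\mathcal P}(\bar{\bar\alpha})$ is totally irrational for a cubic direction), an approximate translate by a lattice vector whose iterate falls inside $\mathscr{S}_\infty$; transporting back via $\Theta_{{\mathcal P}er}^{-n}$ contradicts $\mu \notin \mathscr{S}_\infty$.

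Having established $\mathscr{S}_\infty = \mathscr{S}$, the conclusion
\[
\mathscr{S}(\bar{\bar\alpha}) \;=\; \Theta_{Prep}(\mathscr{S}) \;=\; \Theta_{Prep}\Bigl(\lim_{n\to\infty}\Theta_{{\mathcal P}er}^n(P)\Bigr)
\]
follows by continuity of $\Theta_{Prep}$ along monotone sequences of patches, which in turn reduces to the local finiteness of the substitution rule given in (\ref{eq:dualSubst}). The main obstacle, as indicated, is the density/no-missing-square argument on the contracting plane: one must leverage the Pisot property not only to ensure the existence of the seed $P$ but also to rule out that $\mathscr{S}_\infty$ is a proper $\Theta_{{\mathcal P}er}$-invariant sub-patch, and this is exactly where the Rauzy-fractal picture underlying the reference \cite{F} replaces ad hoc geometric estimates.
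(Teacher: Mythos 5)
First, a point of comparison: the paper does not prove this statement at all. It is quoted verbatim as Theorem 5 of Fernique \cite{F} and used as a black box (together with Conjectures \ref{con1} and \ref{con2}) to arrive at Conjecture \ref{conclusion}. So there is no in-paper proof to match your argument against; what can be assessed is whether your sketch would stand on its own, and there it has one repairable wrinkle and one genuine gap. The wrinkle: primitivity of the incidence matrix does not by itself produce a square $\lambda_0$ with $\Theta_{{\mathcal P}er}^{N}(\{\lambda_0\})\succ\{\lambda_0\}$. Primitivity controls the \emph{types} of squares occurring in the image, but the image of $(\bar{\bar x},i^{*})$ is located near $L^{-N}\bar{\bar x}$, not near $\bar{\bar x}$, so a single square generally does not recur at its own position. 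The standard seed is the fundamental patch ${\mathcal U}=\sum_{i=0}^{2}(\bar{\bar e}_{i},i^{*})$, for which $\Theta_{(i,j)}({\mathcal U})\succ{\mathcal U}$ follows directly from (\ref{eq:dualSubst}) (this is exactly the paper's assertion $\gamma_{n}\prec\gamma_{n+1}$ and Figure \ref{fig:Theta}); that fix is easy.

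The genuine gap is the exhaustion step, which you correctly identify as the heart of the theorem but do not actually prove. The argument you offer --- a missing square $\mu$ admits an approximate lattice translate whose iterate lands in $\mathscr{S}_{\infty}$, and one then ``transports back via $\Theta_{{\mathcal P}er}^{-n}$'' --- does not go through: $\Theta_{{\mathcal P}er}^{-n}$ does not commute with lattice translations of the stepped surface, an \emph{approximate} translate gives no membership statement about $\mu$ itself, and a priori $\mathscr{S}_{\infty}$ is merely some $\Theta_{{\mathcal P}er}$-invariant sub-patch; invariance together with density of the cut-and-project structure does not exclude a proper invariant subset. Fernique's proof replaces this by a quantitative covering estimate: if a patch contains every square within combinatorial radius $R$ of a fixed square, then its image under $\Theta_{{\mathcal P}er}$ contains every square within radius $\lambda R - C$, where $\lambda>1$ is the Pisot eigenvalue and the additive loss $C$ is bounded precisely because the conjugate eigenvalues have modulus $<1$ (this is where the Pisot hypothesis enters, and where a Salem or non-Pisot dominant eigenvalue would break the argument). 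Iterating yields discs of radius tending to infinity around a fixed square, hence all of $\mathscr{S}$. That estimate is the actual content of the theorem, and your sketch explicitly defers it to ``the Rauzy-fractal picture underlying the reference'' rather than supplying it; as written the proposal therefore assumes what is to be proved at the decisive step.
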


\begin{remark}
Theorem \ref{th:6.3} gives an effective generation of 
any stepped surface under the Pisot condition.
\end{remark}

\begin{conjecture}
\label{conclusion}
Let $\bar{\bar \alpha}= \left( \alpha_{0}, \alpha_{1}, \alpha_{2} \right)$ be any $\mathbb{Q}$-basis of arbitrarily given real cubic number field.  Then the stepped surface ${\mathscr S} \left( \bar{\bar \alpha} \right)$ is finitely descriptive, i.e., 
there exist a finite word 
\(
\varepsilon_0 \varepsilon_1 \cdots
\varepsilon_{k-1} \in Ind^{*}= \bigcup_{n=0}^{\infty}
Ind^{n},
\)
and 
a nonempty word
\(
\varepsilon_{k} 
\varepsilon_{k+1} \cdots \varepsilon_{k+l-1} \in Ind^{*}
\)
$ \left( k \geq 0, \ l>1 \right)$,
and a patch $P$ consisting of finite squares such that
\[
{\mathscr S} \left( \bar{\bar \alpha} \right) =
{}_g \Psi_s \left(
\Theta_{\varepsilon _0}
\Theta_{\varepsilon_1}
\cdots
\Theta_{\varepsilon_{k-1}}
\left(
\lim_{n \rightarrow \infty} \left(
\Theta_{\varepsilon_{k}}
\Theta_{\varepsilon_{k+1}}
\cdots
\Theta_{\varepsilon_{k+l-1}}
\right)^{n} \left( P \right)
\right)
\right).
\]

\end{conjecture}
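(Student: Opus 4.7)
The plan is to assemble the conclusion from the three ingredients already available: (i) the eventual periodicity coming from Conjecture \ref{con1}, (ii) the Pisot property of the period matrix coming from Conjecture \ref{con2}, and (iii) Fernique's effective-generation theorem (Theorem \ref{th:6.3}). The structural decomposition that makes this possible is precisely Corollary \ref{cor:stur}, which we will invoke in the role of the hypothesis of Fernique's theorem.

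First, given an arbitrary ${\Bbb Q}$-basis $\bar{\bar{\alpha}}=(\alpha^{(0)},\alpha^{(1)},\alpha^{(2)})$, normalize so that $\alpha^{(0)}+\alpha^{(1)}+\alpha^{(2)}=1$ and set $\alpha=\alpha^{(1)}$, $\beta=\alpha^{(2)}$, so $(\alpha,\beta)\in\Delta_K$ and $\bar{\bar{\nu}}(\alpha,\beta)=\bar{\bar\alpha}$. Apply the algorithm $(\Delta_K,T,\varepsilon)$ with $r=5/2$ to produce the symbolic expansion $\{\varepsilon_n\}_{n=0}^\infty$. By Conjecture \ref{con1}, $(\alpha,\beta)\in\Delta_{K,r}^{\mathcal{P}er}$, hence the sequence $\{\varepsilon_n\}$ is ultimately periodic: there exist $k\geq 0$ and $l\geq 1$ with $(\alpha_k,\beta_k,\varepsilon_k)=(\alpha_{k+l},\beta_{k+l},\varepsilon_{k+l})$.

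Second, Corollary \ref{cor:stur} applied to this preperiod/period splitting gives
\[
{}_s\Psi_g\bigl({\mathscr S}(\bar{\bar\alpha})\bigr)=\Theta_{\varepsilon_0}\cdots\Theta_{\varepsilon_{k-1}}\bigl({}_s\Psi_g({\mathscr S}(\bar{\bar{\nu}}(\alpha_k,\beta_k)))\bigr),
\]
together with the fixed-point relation $\Theta({}_s\Psi_g({\mathscr S}(\bar{\bar{\nu}}(\alpha_k,\beta_k))))={}_s\Psi_g({\mathscr S}(\bar{\bar{\nu}}(\alpha_k,\beta_k)))$, where $\Theta=\Theta_{\varepsilon_k}\cdots\Theta_{\varepsilon_{k+l-1}}$. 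This is exactly the data required by Theorem \ref{th:fer}, with $\Theta_{Prep}=\Theta_{\varepsilon_0}\cdots\Theta_{\varepsilon_{k-1}}$ and $\Theta_{{\mathcal P}er}=\Theta$. It remains to check Fernique's two hypotheses: bijectivity of $\Theta_{{\mathcal P}er}$ on ${\mathscr S}(\bar{\bar{\nu}}(\alpha_k,\beta_k))$, which is a direct consequence of Theorem \ref{th:tilingsubst} applied $l$ times and composed; and the Pisot-type condition on $\Theta_{{\mathcal P}er}$, whose incidence matrix is $L_{\varepsilon_k}\cdots L_{\varepsilon_{k+l-1}}={}^{t}(M_{\varepsilon_k}\cdots M_{\varepsilon_{k+l-1}})$ (up to indexing permitted by Lemma \ref{lem:astar}), which by Conjecture \ref{con2} has a Pisot number as its maximal eigenvalue; primitivity, needed to invoke the full strength of Pisot-substitution theory, is already granted by Theorem \ref{th:primitive}.

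Third, apply Theorem \ref{th:fer}: there exists a finite patch $P$ of ${\mathscr S}(\bar{\bar{\nu}}(\alpha_k,\beta_k))$ such that ${}_s\Psi_g({\mathscr S}(\bar{\bar{\nu}}(\alpha_k,\beta_k)))=\lim_{n\to\infty}\Theta_{{\mathcal P}er}^n(P)$. Substituting back through the preperiod and converting from ${\mathcal G}$ to ${\mathcal S}$ via ${}_g\Psi_s$ yields the desired formula
\[
{\mathscr S}(\bar{\bar\alpha})={}_g\Psi_s\Bigl(\Theta_{\varepsilon_0}\cdots\Theta_{\varepsilon_{k-1}}\bigl(\lim_{n\to\infty}(\Theta_{\varepsilon_k}\cdots\Theta_{\varepsilon_{k+l-1}})^n(P)\bigr)\Bigr).
\]
The main obstacle is verifying the Pisot-type hypothesis of Fernique's theorem in the precise form he requires: although Conjecture \ref{con2} supplies a Pisot eigenvalue, one must check that this is indeed the \emph{dominant} eigenvalue and that the remaining eigenvalues lie strictly inside the unit disk (not merely that the characteristic polynomial is the minimal polynomial of a Pisot number) --- together with reconciling the convention about which eigenvector direction governs the stepped surface, so that the "direction" preserved by $\Theta_{{\mathcal P}er}$ is exactly $\bar{\bar{\nu}}(\alpha_k,\beta_k)$. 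Lemma \ref{lem:u} together with Lemma \ref{lem:primitive} and Theorem \ref{th:primitive} should handle both points: primitivity forces a unique dominant positive eigenvalue (Perron--Frobenius), and the coincidence with the Pisot eigenvalue from Conjecture \ref{con2} is automatic once its eigenvector is identified with ${}^{t}(1-\alpha_k-\beta_k,\alpha_k,\beta_k)\in{\Bbb R}^3_{>0}$.
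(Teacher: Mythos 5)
Your derivation is correct as a conditional argument and follows exactly the route the paper indicates: the statement is labelled a conjecture precisely because it rests on Conjectures \ref{con1} and \ref{con2}, and the paper's only stated justification is that these two conjectures together with Fernique's Theorem \ref{th:6.3} imply it. Your assembly via Corollary \ref{cor:stur}, Theorem \ref{th:tilingsubst} and Theorem \ref{th:primitive} is the natural filling-in of that implication, including the correct observation that primitivity together with the irreducibility from Lemma \ref{lem:primitive} forces the Pisot eigenvalue supplied by Conjecture \ref{con2} to be the dominant one.
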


\begin{remark}
Conjecture \ref{conclusion}
says that any stepped surface
${\mathscr S} \left(  \bar{\bar \alpha} \right)$
for any $\mathbb{Q}$-basis 
$ \bar{\bar \alpha}  \in K^{3}$ for any given real cubic number field $K$ is finitely descriptive
and generated only by $6$ substitutions.
For notation of $\Theta_{\varepsilon} \left( \varepsilon \in Ind \right)$, see Section \ref{sec:stepped}.  Notice that 
without loss of generality, we may assume 
$\alpha_{0}, \alpha_{1}, \alpha_{2}>0$ and 
$\alpha_{0}+\alpha_{1}+\alpha_{2} = 1$ by the symmetry of the lattice $\mathbb{Z}^{3}$.
\end{remark}
%

%
\noindent
{\bf Acknowledgements}
The first author is supported by Grant-in-Aid 
for 
Scientific Research (C) No. 22540117.
The second author is supported by Grant-in-Aid 
for 
Scientific Research (C) No. 22540119.
The third author is supported by JST PRESTO program 
and by Grant-in-Aid 
for 
Young Scientists (B) No. 21700256.
The fourth and fifth authors are supported by Grant-in-Aid 
for 
Scientific Research (C) No. 22540037.

\end{document}